\begin{document}

\theoremstyle{definition}
\newtheorem{fliess}{}[section]
\newtheorem{defin}[fliess]{Definition}
\newtheorem{expl}[fliess]{Example}
\newtheorem{remark}[fliess]{Remark}
\theoremstyle{plain}
\newtheorem{thm}[fliess]{Theorem}
\newtheorem{prop}[fliess]{Proposition}
\newtheorem{lem}[fliess]{Lemma}
\newtheorem{cor}[fliess]{Corollary}
\newtheorem{claim}{Claim}

\title[Global structure of special cycles on unitary Shimura varieties]{On the global structure of special cycles on unitary Shimura varieties}
\author{Nicolas Vandenbergen}

\begin{abstract}
In this paper, we study the reduced loci of special cycles on local models of the Shimura variety for $GU(1,n-1)$. Those special cycles are defined by Kudla and Rapoport in \cite{kr}. We explicitly compute the global structure of the reduced locus of a single special cycle, as well as of an arbitrary intersection of special cycles, in terms of Bruhat-Tits theory. Furthermore, as an application of our results, we prove the connectedness of arbitrary intersections of special cycles, as conjectured by Kudla and Rapoport (\cite{kr}, Conj.\ 1.3).
\end{abstract}

\maketitle

\section{Introduction}

\noindent \bfseries Motivation. \normalfont A local analogue for the Shimura variety for $GU(1,n-1)$ has been defined in Vollaard's paper \cite{voll} as a formal moduli scheme $\mathcal{N}(1,n-1)$ of $p$-divisible groups with certain additional structures. In the subsequent paper \cite{vw}, Vollaard and Wedhorn give an explicit description of its reduced locus $\mathcal{N}_\textnormal{red}$ by stratifying it with locally closed subvarieties over $\overline{\mathbb{F}}_p$. The strata will be referred to as ``Bruhat-Tits strata'', because they are in bijection to the set of vertices of the Bruhat-Tits building of a certain special unitary group over $\mathbb{Q}_p$. In their paper \cite{kr}, Kudla and Rapoport define special homomorphisms as elements of a certain hermitian $\mathbb{Q}_{p^2}$-vector space $(\mathbb{V},h)$. Given $x\in\mathbb{V}$, they define its associated special cycle $\mathcal{Z}(x)$ as a certain formal subscheme of $\mathcal{N}(1,n-1)$. One should think of those special cycles as the local analogues of special arithmetic cycles on the Shimura variety for $GU(1,n-1)$; the latter, as well as the link between the local and the global situation, are explained in \cite{kr3}.

Kudla and Rapoport show (\cite{kr}, Thm.\ 1.1(i)) that, given $n$ special homomorphisms $x_1,\dots,x_n$, the reduced locus of the intersection $\mathcal{Z}(\underline{x}):=\mathcal{Z}(x_1),\dots,\mathcal{Z}(x_n)$ of their associated special cycles is a union of Bruhat-Tits strata in $\mathcal{N}_\textnormal{red}$ under the assumption that the fundamental matrix $T(\underline{x}):=(h(x_i,x_j))_{i,j}$ is nonsingular. They also compute the dimension of $\mathcal{Z}(\underline{x})_\textnormal{red}$ as a function of the fundamental matrix (\cite{kr}, Thm.\ 1.1(ii)) and gave a condition on the fundamental matrix which is necessary and sufficient for $\mathcal{Z}(\underline{x})_\textnormal{red}$ to be irreducible (\cite{kr}, Thm.\ 1.1(iii)). In the case of proper intersections, i.e.\ in the zero-dimensional case, they show connectedness and compute the intersection multiplicity, relating it to representation densities of hermitian forms (\cite{kr}, Thm.\ 1.1(iv)).

Kudla and Rapoport also conjectured (\cite{kr}, Conj.\ 1.3) that the reduced locus of an improper intersection $\mathcal{Z}(\underline{x})$ of $n$ special cycles is connected, and that the relation between intersection multiplicities and representation densities should also hold in this case.

The aim of this paper is to prove the first part of this conjecture and, more generally, to describe the reduced locus of intersections of special cycles. We will give a full description of the reduced locus $\mathcal{Z}(x)_\textnormal{red}$ for any $x\in\mathbb{V}$, as well as of the reduced locus of an intersection of arbitrarily many special cycles.\\
\\
\bfseries Main results. \normalfont We now recall the definitions from \cite{voll}, \cite{vw} and \cite{kr} necessary to state our results. We fix an odd prime $p$ and a positive integer $n$. Let $\mathbb{F}:=\overline{\mathbb{F}}_p$. We denote by $W:=W(\mathbb{F})$ the corresponding ring of Witt vectors and by $W_\mathbb{Q}:=W\otimes_\mathbb{Z}\mathbb{Q}$ its quotient field. The Frobenius lifts to an automorphism of $W$, resp.\ $W_\mathbb{Q}$, which we denote by $\sigma$. There are two embeddings of $\mathbb{Z}_{p^2}$, resp.\ $\mathbb{Q}_{p^2}$, into $W$, resp. $W_\mathbb{Q}$, which we denote by $\varphi_0$ and $\varphi_1=\sigma\circ\varphi_0$.

The moduli scheme $\mathcal{N}(1,n-1)$ on which we work is described as follows. We fix a triple $(\mathbb{X},\iota_\mathbb{X},\lambda_\mathbb{X})$, where $\mathbb{X}$ is a supersingular $p$-divisible group of dimension $n$ and height $2n$ over $\mathbb{F}$, equipped with an action $\iota_\mathbb{X}:\mathbb{Z}_{p^2}\rightarrow\textnormal{End}(\mathbb{X})$ satisfying the signature condition $(1,n-1)$, i.e.\
\[ \textnormal{charpol}(\iota_\mathbb{X}(\alpha),\textnormal{Lie}\ \mathbb{X})=(T-\varphi_0(\alpha))(T-\varphi_1(\alpha))^{n-1}\in \mathbb{F}\left[T\right]\ \forall\ \alpha\in\mathbb{Z}_{p^2}, \]
and with a $p$-principal polarization $\lambda_\mathbb{X}$ for which the Rosati involution $\ast$ satisfies $\iota_\mathbb{X}(\alpha)^\ast=\iota_\mathbb{X}(\alpha^\sigma)$. Note that such a triple always exists and is unique up to isogeny.

Let $\textnormal{Nilp}_W$ be the category of $W$-schemes on which $p$ is locally nilpotent. Let
\[ \mathcal{N}=\mathcal{N}(1,n-1):\textnormal{Nilp}_W\rightarrow\textnormal{Sets} \]
be the functor which associates to a scheme $S\in\textnormal{Nilp}_W$ the set of isomorphism classes of quadruples $(X,\iota_X,\lambda_X,\rho_X)$, where $X$ is a $p$-divisible group over $S$, where $\iota_X$ and $\lambda_X$ are as above, and where
\[ \rho_X:X\times_W \mathbb{F}\rightarrow\mathbb{X}\times_\mathbb{F} \overline{S} \]
is a quasi-isogeny of height 0 compatible with the additional structures imposed. Here, $\overline{S}=S\times_W\mathbb{F}$ denotes the special fibre of $S$. (See Section 2 for a precise definition of $\mathcal{N}$.)

$\mathcal{N}$ is represented by a formal scheme which we also denote by $\mathcal{N}$. This formal scheme is separated, locally formally of finite type over $W$, and formally smooth of dimension $n-1$ over $W$. The underlying reduced scheme $\mathcal{N}_\textnormal{red}$ is a singular scheme of dimension $\lfloor(n-1)/2\rfloor$ over $\mathbb{F}$.

In order to explain our results, we have to recall some of the results of Vollaard and Wedhorn (\cite{voll}, \cite{vw}) on the structure of $\mathcal{N}_\textnormal{red}$. By Dieudonn\'e theory the triple $(\mathbb{X},\iota_\mathbb{X},\lambda_\mathbb{X})$ defines a $n$-dimensional non-degenerate hermitian vector space $(C,h)$ over $\mathbb{Q}_{p^2}$ for which $\textnormal{ord}\,\textnormal{det}(C,h)$ is odd. Note that this condition determines $(C,h)$ up to isomorphism. A \itshape vertex \normalfont is then by definition a $\mathbb{Z}_{p^2}$-lattice $\Lambda$ in $C$ satisfying
\[ p\Lambda\subseteq\Lambda^\sharp{\subset}\Lambda, \]
where $\Lambda^\sharp:=\{x\in C\ |\ h(x,\Lambda)\subseteq\mathbb{Z}_{p^2}\}$ is the dual lattice of $\Lambda$. The set $\mathcal{L}$ of all vertices is made into a simplicial complex in the following way: Two vertices $\Lambda$, $\tilde\Lambda$ are by definition neighbours in the simplicial complex $\mathcal{L}$ if and only if $\Lambda\subset\tilde\Lambda$ or $\tilde\Lambda\subset\Lambda$. Then Vollaard shows that $\mathcal{L}$ is isomorphic to the simplicial complex of the Bruhat-Tits building $\mathfrak{B}(SU(C,ph),\mathbb{Q}_p)$, hence our use of the term ``vertex''. To each vertex $\Lambda\in\mathcal{L}$, Vollaard and Wedhorn associate a closed irreducible subvariety of $\mathcal{N}_\textnormal{red}$, the \itshape closed Bruhat-Tits stratum \normalfont $\mathcal{N}_\Lambda$. They show that $\mathcal{N}_\textnormal{red}$ is covered by the $\mathcal{N}_\Lambda$.

We furthermore write $t_{\max}$ for the maximal odd integer less than or equal to $n$, and $\mathcal{L}^{\max}$ for the set of vertices of maximal type in $\mathcal{L}$ ; that is, $\mathcal{L}^{\max}$ is the set of vertices $\Lambda\in\mathcal{L}$ for which the length of the quotient $\mathbb{Z}_{p^2}$-module $\Lambda/\Lambda^\sharp$ (which is always an odd integer between 1 and $n$) equals $t_{\max}$. Vollaard and Wedhorn show that the closed Bruhat-Tits strata $\mathcal{N}_\Lambda$ corresponding to these vertices are precisely the irreducible components of $\mathcal{N}_\textnormal{red}$.

We now define special cycles. Let $(\mathbb{Y},\iota_\mathbb{Y},\lambda_\mathbb{Y})$ be the basic triple over $\mathbb{F}$ used in the definition of $\mathcal{N}(1,0)$. Let $(\overline{\mathbb{Y}},\iota_{\overline{\mathbb{Y}}},\lambda_{\overline{\mathbb{Y}}})=(\mathbb{Y},\iota_\mathbb{Y}\circ\sigma,\lambda_\mathbb{Y})$. The space of \itshape special homomorphisms \normalfont is the $\mathbb{Q}_{p^2}$-vector space
\[ \mathbb{V}:=\textnormal{Hom}_{\mathbb{Z}_{p^2}}(\overline{\mathbb{Y}},\mathbb{X})\otimes_\mathbb{Z}\mathbb{Q}, \]
endowed with the non-degenerate $\mathbb{Q}_{p^2}$-valued hermitian form $h$ given by
\[ h(x,y):=\lambda_{\overline{\mathbb{Y}}}^{-1}\circ y^\vee\circ\lambda_\mathbb{X}\circ x\in\textnormal{End}_{\mathbb{Z}_{p^2}}(\overline{\mathbb{Y}})\otimes_\mathbb{Z}\mathbb{Q}\cong\mathbb{Q}_{p^2}, \] 
where the last isomorphism is induced by $\iota^{-1}$. Kudla and Rapoport show that $(\mathbb{V},h)\cong(C,h)$.

The pair $(\overline{\mathbb{Y}},\iota_{\overline{\mathbb{Y}}})$ admits a unique lift to $W$ as a formal $\mathbb{Z}_{p^2}$-module, which we denote by $(\overline{Y},\iota_{\overline{Y}})$. Now for a special homomorphism $x$, we define the \itshape special cycle \normalfont $\mathcal{Z}(\underline{x})$ to be the subfunctor of $\mathcal{N}$ such that $\mathcal{Z}(x)(S)$ consists of the tuples $(X,\iota_X,\lambda_X,\rho_X)\in\mathcal{N}(S)$ for which the quasi-homomorphism
\[ \rho_X^{-1}\circ x:(\overline{Y}\times_W S)\times_W\mathbb{F}= \overline{\mathbb{Y}}\times_\mathbb{F}\overline{S}\longrightarrow X\times_W\mathbb{F} \]
lifts to a $\mathbb{Z}_{p^2}$-linear homomorphism $(\overline{Y}\times_W S)\rightarrow X$. Finally, we associate to a tuple $(x_1,\dots,x_m)$ of special homomorphisms its \itshape fundamental matrix\normalfont,
\[ T(x_1,\dots,x_m):=(h(x_i,x_j))_{i,j}\in\textnormal{Herm}_m(\mathbb{Q}_{p^2}). \]

We now state our results. First, we generalize Thm.\ 1.1(i) and (ii) of \cite{kr} to the case of arbitrary intersections of special cycles.

\begin{prop} \label{dimprime}
The reduced locus $\mathcal{Z}(x_1,\dots,x_m)_\textnormal{red}$ of any intersection 
\[ \mathcal{Z}(x_1,\dots,x_m):=\mathcal{Z}(x_1)\cap\dots\cap\mathcal{Z}(x_m) \]
of special cycles is a union of Bruhat-Tits strata. If $T(x_1,\dots,x_m)$ is non-singular with integral entries, then $\mathcal{Z}(x_1,\dots,x_m)_\textnormal{red}$ is pure of dimension 
\[ \dim\,\mathcal{Z}(x_1,\dots,x_m)_\textnormal{red}=\lfloor (n-\textnormal{rk}(\textnormal{red}(T))-1)/2\rfloor, \]
where $\textnormal{red}(T)$ is the image of $T$ in $\textnormal{Herm}_m(\mathbb{F}_{p^2})$.
\end{prop}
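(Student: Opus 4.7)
My strategy is to reduce the statement to a combinatorial problem about vertex lattices in $(C,h)$, via the Dieudonn\'e-theoretic description of $\mathcal{N}_\textnormal{red}$ given in \cite{voll, vw}. In that description, each $\mathbb{F}$-point $y$ of $\mathcal{N}$ is attached to a vertex lattice $A(y)\in\mathcal{L}$, and the closed Bruhat--Tits stratum $\mathcal{N}_\Lambda$ is characterized, on $\mathbb{F}$-points, by a lattice inclusion relating $\Lambda$ and $A(y)$, compatibly with both the closure relation between strata and the dimension formula $\dim\mathcal{N}_\Lambda=(t(\Lambda)-1)/2$ of Vollaard--Wedhorn.

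The key new input is a ``lattice translation lemma'' for a single special cycle. Using the unique formal lift $\overline{Y}$ of $\overline{\mathbb{Y}}$ together with the Grothendieck--Messing crystalline criterion, I would show that the condition ``$\rho_X^{-1}\circ x$ lifts to a $\mathbb{Z}_{p^2}$-linear homomorphism $\overline{Y}\times_W S\to X$'' is equivalent, after identifying $\mathbb{V}\cong C$, to a lattice-inclusion condition of the form $x\in A(y)$. Intersecting over $x_1,\ldots,x_m$ identifies
\[ \mathcal{Z}(\underline{x})_\textnormal{red}(\mathbb{F}) = \bigl\{\,y : L(\underline{x})\subseteq A(y)\,\bigr\}, \qquad L(\underline{x}):=\textstyle\sum_{i=1}^m \mathbb{Z}_{p^2}x_i, \]
and, combined with the stratum description above, gives $\mathcal{Z}(\underline{x})_\textnormal{red}=\bigcup_{\Lambda\in\mathcal{L}(\underline{x})}\mathcal{N}_\Lambda$ with $\mathcal{L}(\underline{x}):=\{\Lambda\in\mathcal{L} : L(\underline{x})\subseteq\Lambda\}$. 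This proves the first assertion.

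For the dimension formula, it suffices by the Vollaard--Wedhorn formula to compute $\tau(\underline{x}):=\max\{t(\Lambda):\Lambda\in\mathcal{L}(\underline{x})\}$. When $T(\underline{x})$ has integral entries and is nonsingular, I would put $L(\underline{x})$ in a diagonal hermitian normal form over $\mathbb{Z}_{p^2}$: the Smith normal form has exactly $\textnormal{rk}(\textnormal{red}(T))$ unit invariant factors, the remaining $m-\textnormal{rk}(\textnormal{red}(T))$ being divisible by $p$. A vertex of maximal type containing $L(\underline{x})$ is then built by leaving the unit directions fixed, enlarging the $p$-divisible directions up to their duals in $L(\underline{x})^\sharp$ (so as to contribute to the type), and choosing a maximal-type vertex on the $(n-m)$-dimensional orthogonal complement of $L(\underline{x})$ in $C$. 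A bookkeeping of signatures, constrained by $\textnormal{ord}\,\textnormal{det}(C,h)$ being odd, yields $\tau(\underline{x})$ equal to the largest odd integer at most $n-\textnormal{rk}(\textnormal{red}(T))$, and the claimed floor formula follows.

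The main obstacle will be establishing the lattice translation lemma uniformly for all $\mathbb{F}$-points of $\mathcal{N}_\textnormal{red}$, not only those lying on the open dense stratum where $A(y)\in\mathcal{L}^{\max}$, since the Dieudonn\'e module of a general point is less transparent. The remaining combinatorial arguments, while requiring careful bookkeeping of signatures and types, reduce to standard manipulations in the theory of hermitian lattices over the unramified quadratic extension $\mathbb{Q}_{p^2}/\mathbb{Q}_p$.
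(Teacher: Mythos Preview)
Your plan differs substantially from the paper's, and has a genuine gap.

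\textbf{The paper's approach.} The paper does no direct lattice computation. Both assertions for general $m$ are reduced to the case $m=n$, already established in \cite{kr}. One completes $(x_1,\dots,x_m)$ to an orthogonal basis $(x_1,\dots,x_m,z_{m+1},\dots,z_n)$ of $\mathbb{V}$ and uses Lemma~\ref{ausbr}: for any vertex $\Lambda$ one has $\mathcal{V}(\Lambda)\subseteq\mathcal{Z}(p^l\underline{z})(\mathbb{F})$ once $l\gg0$. If $A\in\mathcal{Z}(\underline{x})(\mathbb{F})$ lies in the open stratum $\mathcal{N}_\Lambda^\circ$, then $A\in\mathcal{Z}(\underline{x},p^l\underline{z})(\mathbb{F})$, and the known $m=n$ case forces $\mathcal{V}(\Lambda)\subseteq\mathcal{Z}(\underline{x},p^l\underline{z})(\mathbb{F})\subseteq\mathcal{Z}(\underline{x})(\mathbb{F})$. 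Pure-dimensionality is obtained the same way: any two strata maximal in $\mathcal{Z}(\underline{x})_\textnormal{red}$ become irreducible components of some $\mathcal{Z}(\underline{x},p^l\underline{z})_\textnormal{red}$, and \cite{kr}, Cor.~4.3, gives equal type.

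\textbf{Issues with your plan.} First, there is a conflation between the Dieudonn\'e lattice of a point and a vertex lattice. An $\mathbb{F}$-point $y$ corresponds to a $W$-lattice $A\subset N_0$ satisfying $pA\overset{1}{\subset}A^\sharp\overset{n-1}{\subset}A$; such an $A$ is in general not $\tau$-invariant and does not lie in $\mathcal{L}$. The correct membership condition (\cite{kr}, Lemma~3.9) is $x\in A^\sharp$. Passing from ``$x\in A^\sharp$ for one $A\in\mathcal{N}_\Lambda^\circ(\mathbb{F})$'' to ``$x\in\Lambda^\sharp$'' is precisely the nontrivial content here; it \emph{can} be done directly (using $\tau$-invariance of $x\in C$ together with $\Lambda_W=\sum_{k\ge0}\tau^kA$), but your outline does not isolate this step.

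Second, and more seriously, your dimension argument addresses only $\dim\mathcal{Z}(\underline{x})_\textnormal{red}$, not \emph{pure}-dimensionality. Computing $\tau(\underline{x})=\max\{t(\Lambda):\Lambda\in\mathcal{S}(\underline{x})\}$ and constructing one vertex attaining it yields only the top-dimensional component. You still need that every inclusion-maximal vertex in $\mathcal{S}(\underline{x})$ has type $\tau(\underline{x})$, and nothing in your Smith-normal-form construction supplies this; a priori there could be maximal vertices of smaller type. The paper's reduction to $m=n$ imports pure-dimensionality for free from \cite{kr}.
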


Therefore, the reduced locus $\mathcal{Z}(x_1,\dots,x_m)_\textnormal{red}$ of the intersection of the special cycles associated to $x_1,\dots,x_m\in\mathbb{V}$ is determined by the following simplicial subcomplex of $\mathcal{L}$:
\[ \mathcal{S}(x_1,\dots,x_m):=\{\Lambda\in\mathcal{L}\ |\ x_i\in\Lambda^\sharp\ \forall\ 1\leq i\leq m\}. \]
Here, we view special homomorphisms as vectors in $C$ via the isomorphism $(\mathbb{V},h)\cong(C,h)$.

We then turn to computing the simplicial complex $\mathcal{S}(x)$ for a single special homomorphism $x$. We write $r(x):=v(h(x,x))$, where $v$ is the discrete valuation on $\mathbb{Q}_p$, and call this number the \itshape valuation \normalfont of $x$. We may assume that $r(x)$ is a nonnegative integer, thus the dimension computation of Prop.\ \ref{dimprime} applies.

We will give a recursive formula for $\mathcal{S}(x)$, using the cases where $r(x)=0$ and $r(x)=1$ as base cases. The recursion step will give an explicit formula for $\mathcal{S}(x)$ in terms of $\mathcal{S}(p^{-1}x)$ if $r(x)>1$ (note that $r(p^{-1}x)=r(x)-2$).

In order to state our results for the two base cases, we view the special homomorphism $x$ as an element of $C$ and write $C_\perp:=(\mathbb{Q}_{p^2}\cdot x)^\perp$, where the orthogonal complement is taken w.r.t.\ $h$.

\begin{thm} \label{prime}
Let $x\in\mathbb{V}$ be a special homomorphism of nonnegative valuation.
\begin{enumerate}
\item (\cite{kr2}, Prop.\ 5.2) If $r(x)=0$, then there is an isomorphism of simplicial complexes
\[ \mathcal{S}(x)\cong\mathfrak{B}(SU(C_\perp,ph),\mathbb{Q}_p). \]
\item If $r(x)=1$, then there is an injective morphism of simplicial complexes
\[ \Phi:\mathfrak{B}(SU(C_\perp,ph),\mathbb{Q}_p)\rightarrow \mathcal{S}(x), \]
which is surjective in maximal type, i.e.\ $\mathcal{S}(x)\cap\mathcal{L}^{\max}$ is contained in the image of $\Phi$.
\item If $r(x)\geq 2$, then 
\[ \mathcal{S}(x)=\{\Lambda\in\mathcal{L}\ |\ \exists\ \tilde\Lambda\in\mathcal{L},\hat\Lambda\in \mathcal{S}(p^{-1}x):\ \Lambda\subseteq\tilde\Lambda,\ \hat\Lambda\subseteq\tilde\Lambda\}. \]
\end{enumerate}
\end{thm}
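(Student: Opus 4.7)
Part (1) is quoted from \cite{kr2}. For part (2), since $h(x,x)\in p\mathbb{Z}_{p^2}^\times$, define
\[
\Phi(\Lambda_\perp) := \Lambda_\perp + p^{-1}\mathbb{Z}_{p^2}\cdot x.
\]
A bilinearity calculation using $C_\perp \perp \mathbb{Q}_{p^2}x$ gives $\Phi(\Lambda_\perp)^\sharp = \Lambda_\perp^\sharp + \mathbb{Z}_{p^2}\cdot x$; hence $x \in \Phi(\Lambda_\perp)^\sharp$ automatically, and the vertex conditions for $\Phi(\Lambda_\perp)$ in $\mathcal{L}$ translate exactly into the vertex conditions on $\Lambda_\perp$ defining the building $\mathfrak{B}(SU(C_\perp,ph),\mathbb{Q}_p)$. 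Injectivity is immediate from $\Lambda_\perp = \Phi(\Lambda_\perp)\cap C_\perp$, and adjacency is preserved because $\Phi$ is additive in $\Lambda_\perp$. For surjectivity on $\mathcal{L}^{\max}$, one takes $\Lambda\in\mathcal{S}(x)\cap\mathcal{L}^{\max}$, sets $\Lambda_\perp:=\Lambda\cap C_\perp$, and argues that maximality of type pins down the $\mathbb{Q}_{p^2}x$-component of $\Lambda$ as $p^{-1}\mathbb{Z}_{p^2}\cdot x$, forcing $\Lambda=\Phi(\Lambda_\perp)$.

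For part (3), the inclusion ``$\supseteq$'' is a short duality calculation: given $\Lambda\subseteq\tilde\Lambda\supseteq\hat\Lambda$ with $p^{-1}x\in\hat\Lambda^\sharp$, the containment $\hat\Lambda\subseteq\tilde\Lambda$ yields $p\tilde\Lambda\subseteq\tilde\Lambda^\sharp\subseteq\hat\Lambda^\sharp\subseteq\hat\Lambda$, whence $h(\hat\Lambda^\sharp,p\tilde\Lambda)\subseteq h(\hat\Lambda^\sharp,\hat\Lambda)\subseteq\mathbb{Z}_{p^2}$, i.e.\ $p\hat\Lambda^\sharp\subseteq\tilde\Lambda^\sharp$. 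Hence $x = p(p^{-1}x) \in p\hat\Lambda^\sharp \subseteq \tilde\Lambda^\sharp \subseteq \Lambda^\sharp$.

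For the reverse inclusion, given $\Lambda\in\mathcal{S}(x)$ with $r(x)\geq 2$, set
\[
\tilde\Lambda := \Lambda + \mathbb{Z}_{p^2}\cdot p^{-1}x, \qquad \hat\Lambda := \bigl(\tilde\Lambda^\sharp + \mathbb{Z}_{p^2}\cdot p^{-1}x\bigr)^\sharp.
\]
That $\tilde\Lambda$ is a vertex reduces to showing $h(\tilde\Lambda,\tilde\Lambda)\subseteq p^{-1}\mathbb{Z}_{p^2}$, which follows from $x\in\Lambda^\sharp$ (bounding $h(p^{-1}x,\Lambda)$) and $r(x)\geq 2$ (bounding $h(p^{-1}x,p^{-1}x)$). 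By bi-duality $\hat\Lambda^\sharp = \tilde\Lambda^\sharp + \mathbb{Z}_{p^2}\cdot p^{-1}x \ni p^{-1}x$; the vertex axioms for $\hat\Lambda$ follow from $x\in p\tilde\Lambda$ together with $r(x)\geq 2$, and the strict inclusion $\hat\Lambda^\sharp\subsetneq\hat\Lambda$ uses the oddness of $\textnormal{ord}\,\textnormal{det}(C,h)$, which forbids self-dual lattices. The main obstacle is the surjectivity of $\Phi$ on maximal type in part (2): from the bare hypothesis $r(x)=1$ one must extract the correct internal decomposition of any $\Lambda\in\mathcal{S}(x)\cap\mathcal{L}^{\max}$ into its $C_\perp$-part and its $\mathbb{Q}_{p^2}x$-part, controlled by the type constraint.
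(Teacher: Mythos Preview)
Your treatment of part (3) is correct and is genuinely different from the paper's. The paper proves the inclusion $\mathcal{S}(x)\subseteq\{\ldots\}$ only on vertices of maximal type: it first invokes pure-dimensionality (Prop.~\ref{dimension}) to enlarge an arbitrary $\Lambda\in\mathcal{S}(x)$ to some $\tilde\Lambda\in\mathcal{S}(x)^{\max}$, and then proves a non-trivial lemma (Lemma~\ref{tomate}) that $x\in\tilde\Lambda^\sharp$ forces $p^{-1}x\in\tilde\Lambda$, whose even-$n$ case requires the isocrystal machinery of Lemma~\ref{fortytwo}. Your direct construction $\tilde\Lambda=\Lambda+\mathbb{Z}_{p^2}p^{-1}x$, $\hat\Lambda=(\tilde\Lambda^\sharp+\mathbb{Z}_{p^2}p^{-1}x)^\sharp$ bypasses all of this and works uniformly for any $\Lambda\in\mathcal{S}(x)$, any parity of $n$; it is a cleaner argument for the statement as written. (What it does not give, and what the paper's route does, is the sharper fact that $\tilde\Lambda$ can always be taken of maximal type and $\hat\Lambda$ of type $t_{\max}-2$; this is used later in the paper for the algorithm of Thm.~\ref{vielexprime}.)

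Part (2), by contrast, has the gap you yourself flag. Your map $\Phi$ agrees with the paper's (in the $(C,h)$ language the paper's $\Phi(\Lambda_\perp)=(L_{(1,0)}\oplus\Lambda_{\perp,\bullet})_0^{\tau=1}$ is exactly $\Lambda_\perp+p^{-1}\mathbb{Z}_{p^2}x$), and the verification that $\Phi$ is an injective morphism of simplicial complexes landing in $\mathcal{S}(x)$ is as you say. But ``maximality of type pins down the $\mathbb{Q}_{p^2}x$-component'' is precisely the content that needs proof. In the paper this is Lemma~\ref{einslambda}: one must rule out $\Lambda\cap\mathbb{Q}_{p^2}x=\mathbb{Z}_{p^2}x$ for $\Lambda\in\mathcal{S}(x)^{\max}$. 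For odd $n$ this is immediate from $\Lambda^\sharp=p\Lambda$, but for even $n$ (where $t_{\max}=n-1$) the paper needs the anti-isometry $\gamma:\Lambda_\parallel^\vee/\Lambda_\parallel\to\Lambda_\perp^\vee/\Lambda_\perp$ of Lemma~\ref{fortytwo} together with a Frobenius-kernel computation to derive a contradiction with the signature condition on $\Lambda_\perp$. Nothing in your outline supplies this step, and without it the surjectivity of $\Phi$ on $\mathcal{L}^{\max}$ is unproven. Note also that you cannot recycle your part (3) trick here: adjoining $p^{-1}x$ to $\Lambda$ does not help when $\Lambda$ is already of maximal type, since then $\tilde\Lambda=\Lambda$ is forced and the question of whether $p^{-1}x\in\Lambda$ is exactly what is at stake.
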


The first part is due to Kudla and Rapoport, while the other two results are new. Kudla and Rapoport explicitly compare $\mathcal{Z}(x)(\mathbb{F})$ to $\mathcal{N}(1,n-2)(\mathbb{F})$ for $x$ of valuation 0 in their unpublished notes \cite{kr2}, introducing a certain stratification of $\mathcal{Z}(x)(\mathbb{F})$. The Bruhat-Tits building $\mathfrak{B}(SU(C_\perp,ph),\mathbb{Q}_p)$ comes into play because it encodes the incidence relation of Bruhat-Tits strata in $\mathcal{N}(1,n-2)_\textnormal{red}$. I should note that Kudla and Rapoport actually prove a stronger result, namely that $\mathcal{Z}(x)\cong\mathcal{N}(1,n-2)$ \itshape as formal schemes \normalfont (\cite{kr}, Prop.\ 9.2).

In the case $r(x)=1$, the order of determinant of $h$ on $C_\perp$ is even and thus $\mathfrak{B}(SU(C_\perp,ph),\mathbb{Q}_p)$ does not encode the incidence relation of Bruhat-Tits strata in $\mathcal{N}(1,n-2)_\textnormal{red}$. In our paper, $\mathfrak{B}(SU(C_\perp,ph),\mathbb{Q}_p)$ will not arise from a stratification of a moduli scheme. Instead, we will interpret $\mathfrak{B}(SU(C_\perp,ph),\mathbb{Q}_p)$ as the simplicial complex of lattices $\Lambda_\perp$ in $C_\perp$ satisfying a chain condition $p\Lambda_\perp\subseteq\Lambda_\perp^\sharp\subseteq\Lambda_\perp$ and use a ``stratification'' of the set $\mathcal{L}$ similar to the one on $\mathcal{Z}(x)(\mathbb{F})$ used by Kudla and Rapoport. The key point is to show that any vertex $\Lambda\in \mathcal{S}(x)$ which is of maximal type in $\mathcal{L}$ decomposes orthogonally as $\Lambda=p^{-1}\mathbb{Z}_{p^2}x\oplus(\Lambda\cap C_\perp)$. From this we then deduce Thm.\ \ref{prime}(2), which, together with pure-dimensionality of $\mathcal{Z}(x)_\textnormal{red}$, shows that $\mathcal{S}(x)$ can explicitly be computed from $\mathfrak{B}(SU(C_\perp,ph),\mathbb{Q}_p)$.

To prove Thm.\ \ref{prime}(3), we generalize the approach of Terstiege, who in his paper \cite{ter} gives an explicit formula for $\mathcal{S}(x)$ for arbitrary $x$ in the first non-trivial case $n=3$ by using ad hoc methods for $x$ of valuation 0 or 1 and showing an inductive formula of the type claimed in (3). The generalization of Terstiege's proof is quite straightforward in the case where $n$ is odd. If $n$ is even, some extra work has to be done to show that $x\in\Lambda^\sharp$ implies $p^{-1}x\in\Lambda$ for vertices $\Lambda$ of maximal type.

Finally, for any $m$ and any $x_1,\dots,x_m\in\mathbb{V}$, we explicitly compute the simplicial complex $\mathcal{S}(x_1,\dots,x_m)$ of vertices whose associated Bruhat-Tits strata are contained in $(\mathcal{Z}(x_1)\cap\dots\cap\mathcal{Z}(x_m))_\textnormal{red}$. For this, we assume (w.l.o.g., see \ref{fleshwound}) that the $x_i$ are perpendicular to each other w.r.t.\ $h$, that all valuations $r(x_i)$ are nonnegative integers and that the $x_i$ are ordered increasingly by valuation. The rough idea is to apply the formulae of Thm.\ \ref{prime} alternatingly. More precisely:

Let $r$ be any nonnegative integer. Write $m_r:=\textnormal{max}\{i\ |\ r(x_i)<r\}$. Denote by $C_r$ the orthogonal complement of the subspace of $C$ spanned by $x_1,\dots,x_{m_r}$. Set 
\[ \mathcal{L}_r:=\{ \Lambda\subset C_r\ \textnormal{a}\ \mathbb{Z}_{p^2}\textnormal{-lattice}\ |\ p\Lambda\subseteq\Lambda^\sharp\subseteq\Lambda\}. \]
By the results of Vollaard, in particular Thm.\ 3.6 of \cite{voll}, we can endow the set $\mathcal{L}_r$ with a simplicial complex structure as we did for $\mathcal{L}$, and then we have an isomorphism $\mathcal{L}_r\cong\mathfrak{B}(SU(C_r,ph),\mathbb{Q}_p)$ of simplicial complexes. Note that, if $m=n$ (i.e.\ in the case considered in \cite{kr}) and $r>r(x_m)$, then $C_r$ is the zero space and thus the simplicial complex $\mathcal{L}_r$ consists of a single point.

We then show that, for any $s\geq 0$, there are injections of simplicial complexes 
\[ \Phi_s:\mathcal{L}_{2s+2}\rightarrow\mathcal{L}_{2s}. \]
To do this, we basically iterate the maps $\Phi:\mathfrak{B}(SU(C_\perp,ph),\mathbb{Q}_p)\rightarrow\mathcal{L}$ constructed in the proofs of Thm.\ \ref{prime}(1) and (2). We also show that the $\Phi_s$ have properties ``similar'' to those maps and furthermore that the procedure of the proof of Thm.\ \ref{prime}(3) generalizes to intersections of special cycles and to arbitrary order of determinant of $h$. Putting together, we obtain:

\begin{thm}\label{vielexprime}
Let $\underline{x}=(x_1,\dots,x_m)\in\mathbb{V}^m$ satisfy our assumptions above. Then $\mathcal{S}(\underline{x})$ is equal to the simplicial complex $S_0$ computed by the following algorithm:
\begin{enumerate}
\addtocounter{enumi}{-1}
\item Set $s:=\lfloor r(x_m)/2\rfloor$. Set $S_{s+1}':=\mathcal{L}_{2s+2}$.\smallskip
\item Set $S_s:=\{ \Lambda\in\mathcal{L}_{2s}\ |\ \exists\ \tilde\Lambda\in\Phi_s(S_{s+1}'):\ \Lambda\subseteq\tilde\Lambda \}$.\smallskip
\item Set $S_s':=\{ \Lambda\in\mathcal{L}_{2s}\ |\ \exists\ \tilde\Lambda\in\mathcal{L}_{2s},\hat\Lambda\in S_s:\ \Lambda\subseteq\tilde\Lambda,\ \hat\Lambda\subseteq\tilde\Lambda \}$.\smallskip
\item If $s=0$, stop, else replace $s$ by $s-1$ and go to Step 1.
\end{enumerate}
\end{thm}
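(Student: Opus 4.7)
The approach is downward induction on $s$. Write $I_s := \{i : r(x_i) \geq 2s\}$ and let $\mathcal{S}_{2s}(\ldots)$ denote the analogue of $\mathcal{S}$ computed inside $\mathcal{L}_{2s}$ for a tuple of special homomorphisms lying in $C_{2s}$. The invariant to prove is
\[ S'_{s+1} = \mathcal{S}_{2s+2}\bigl(p^{-(s+1)} x_i : i \in I_{s+1}\bigr) \quad \text{in } \mathcal{L}_{2s+2}, \]
where the $p^{-(s+1)}x_i$ for $i \in I_{s+1}$ are special homomorphisms in $C_{2s+2}$ of nonnegative valuation. The base case $s = \lfloor r(x_m)/2\rfloor$ is immediate: $I_{s+1}$ is empty, so the right-hand side is all of $\mathcal{L}_{2s+2}$, matching $S'_{s+1}$ by construction.

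For the inductive step, assume the invariant holds at $s+1$. The map $\Phi_s : \mathcal{L}_{2s+2} \to \mathcal{L}_{2s}$ is built by iterating the morphisms of Thm.\ \ref{prime}(1) (in the valuation-$0$ case) and Thm.\ \ref{prime}(2) (in the valuation-$1$ case), once for each $x_i$ with $r(x_i) \in \{2s, 2s+1\}$, i.e.\ each $x_i$ newly lying in $C_{2s}$ but not in $C_{2s+2}$. Combining the inductive hypothesis with the surjectivity on maximal type in Thm.\ \ref{prime}(1)--(2) (applied iteratively), one identifies
\[ \Phi_s(S'_{s+1}) = \mathcal{S}_{2s}\bigl(p^{-s} x_i : i \in I_s\bigr) \cap \mathcal{L}^{\max}_{2s}. \]
Step 1 of the algorithm then forms the downward closure $S_s$ of this set. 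Since $\mathcal{S}$-complexes are downward closed in $\mathcal{L}$ (as $\Lambda' \subseteq \Lambda$ forces $(\Lambda')^\sharp \supseteq \Lambda^\sharp$) and every vertex in such a complex lies below some maximal-type vertex of the same complex (by the type-raising statements from Vollaard--Wedhorn), this yields $S_s = \mathcal{S}_{2s}(p^{-s}x_i : i \in I_s)$. Step 2 then applies the up-down formula of Thm.\ \ref{prime}(3), generalized to intersections as announced in the text before the theorem: the $p^{-s}x_i$ with $i \in I_{s+1}$ have valuation $r(x_i) - 2s \geq 2$ in $C_{2s}$, so the generalized formula applies and propagates the invariant to $S'_s$. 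Termination at $s = 0$ gives $S_0 = \mathcal{S}(\underline{x})$, since $C_0 = C$ and $\mathcal{L}_0 = \mathcal{L}$.

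The principal obstacle I anticipate is establishing the simultaneous orthogonal decomposition underlying the identification of $\Phi_s(S'_{s+1})$ with the maximal-type part: every maximal-type vertex $\Lambda \in \mathcal{S}(\underline{x})$ must split orthogonally as
\[ \Lambda = \Lambda_\perp \oplus \bigoplus_i p^{-\lceil r(x_i)/2\rceil} \mathbb{Z}_{p^2} x_i, \]
with $\Lambda_\perp$ a vertex in the orthogonal complement of $\mathrm{span}(x_i)$. In the single-variable setting this is precisely the ``key point'' flagged in the paper as the heart of Thm.\ \ref{prime}(2). To iterate it to the intersection setting, I would peel off one $x_i$ at a time, using the orthogonality of the $x_i$ (guaranteed by the reduction \ref{fleshwound}) to keep the remaining complement well-behaved at each stage, and re-establishing the ``even $n$'' implication $x_i \in \Lambda^\sharp \Rightarrow p^{-1}x_i \in \Lambda$ for maximal-type $\Lambda$ at each stage, as flagged after Thm.\ \ref{prime}(3). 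Once this simultaneous decomposition is in hand, all steps above are essentially algorithmic bookkeeping of valuation levels, and the induction closes.
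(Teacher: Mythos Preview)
Your overall strategy matches the paper's: identify the algorithmic sets $S_s,S_s'$ with intrinsic $\mathcal{S}$-complexes inside $\mathcal{L}_{2s}$ and run a downward induction grounded in the orthogonal-decomposition result for top-type vertices. The paper carries this out in Section 9 (Def.\ \ref{allesdoof} and Thm.\ \ref{vielex}), and your ``principal obstacle'' paragraph correctly names the non-trivial ingredient.

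There is, however, an off-by-one error in your invariant. The paper's intrinsic definitions are $S_s=\mathcal{S}_{2s}(p^{-s}x_i:i\in I_s)$ and $S_s'=\mathcal{S}_{2s}(p^{-(s-1)}x_i:i\in I_s)$; in particular $S_{s+1}'=\mathcal{S}_{2s+2}(p^{-s}x_i:i\in I_{s+1})$, not $p^{-(s+1)}$ as you wrote. With your invariant, Step 2 would force $S_s=S_s'$, which is false whenever the up-down closure is strictly larger. The fix is mechanical once spotted: Step 2 replaces each $p^{-s}x_i$ by $p\cdot p^{-s}x_i=p^{-(s-1)}x_i$, and the hypothesis of the generalized Thm.\ \ref{prime}(3) is that $p^{-(s-1)}x_i$ has valuation $\geq 2$, which holds for \emph{all} $i\in I_s$, not only $i\in I_{s+1}$.

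A second imprecision sits in your displayed identification $\Phi_s(S_{s+1}')=\mathcal{S}_{2s}(\ldots)\cap\mathcal{L}_{2s}^{\max}$. The image of $\Phi_s$ does not land in $\mathcal{L}_{2s}^{\max}$; it shifts type by $m_{2s+2}-m_{2s+1}$, and the maximal type occurring in $S_s$ is the largest odd integer $t_0\leq n-m_{2s+1}$, which is strictly below the maximal type of $\mathcal{L}_{2s}$ whenever some $x_i$ has $r(x_i)=2s$. The correct statement is that $\Phi_s$ carries the top-type part of $S_{s+1}'$ onto $S_s\cap\mathcal{L}_{2s}^{(t_0)}$. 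Relatedly, the fact that every vertex of $S_s$ lies below one of type $t_0$ is not a Vollaard--Wedhorn generality; it is Prop.\ \ref{dimension}, available only for $S_0$, and the paper has to bootstrap it to the intermediate $S_s$ and $S_s'$ (Lemmas \ref{knuddel} and \ref{kuschel}) by embedding back into $\mathcal{L}_0$ via an auxiliary tuple $\underline{z}$. Your sketch skips this bootstrapping, and without it neither direction of the Step 1/Step 2 identification at level $s>0$ is justified.
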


Using the connectedness of the Bruhat-Tits buildings $\mathfrak{B}(SU(C_\perp,ph),\mathbb{Q}_p)$ in the case $m=1$, resp.\ $\mathfrak{B}(SU(C_{2s+2},ph),\mathbb{Q}_p)$ in the case $m>1$, a closer look at the explicit descriptions of $\mathcal{S}(x)$, resp.\ $\mathcal{S}(x_1,\dots,x_m)$, then proves our main theorem:

\begin{thm}\label{holygrail}
Let $m$ be any positive integer. Let $x_1,\dots,x_m\in\mathbb{V}$ be any set of special homomorphisms. Then the intersection $\mathcal{Z}(x_1,\dots,x_m)_\textnormal{red}$ of the reduced loci of the associated special cycles is connected.
\end{thm}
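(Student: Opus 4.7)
By Proposition \ref{dimprime} the reduced locus $\mathcal{Z}(\underline{x})_\textnormal{red}$ is the union of the closed Bruhat-Tits strata $\mathcal{N}_\Lambda$ for $\Lambda\in\mathcal{S}(\underline{x})$. Two adjacent vertices $\Lambda\subsetneq\tilde\Lambda$ in $\mathcal{L}$ give rise to strata one of which is contained in the other, so $\mathcal{N}_\Lambda\cap\mathcal{N}_{\tilde\Lambda}\neq\emptyset$; hence topological connectedness of $\mathcal{Z}(\underline{x})_\textnormal{red}$ is equivalent to combinatorial connectedness of the $1$-skeleton of $\mathcal{S}(\underline{x})$, and it is the latter that I would prove. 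The only non-combinatorial input will be the classical connectedness of the Bruhat-Tits buildings $\mathfrak{B}(SU(C_r,ph),\mathbb{Q}_p)$.

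For $m=1$ I would separate the three cases of Theorem \ref{prime}. If $r(x)=0$, $\mathcal{S}(x)$ is literally a Bruhat-Tits building. If $r(x)=1$, the image of $\Phi$ is a connected sub-complex, and by pure-dimensionality (Proposition \ref{dimprime}) every $\Lambda\in\mathcal{S}(x)$ is contained in some vertex of maximal type in $\mathcal{S}(x)$, which by part (2) lies in the image of $\Phi$; so $\mathcal{S}(x)$ remains connected. If $r(x)\geq 2$, the recursion in part (3) preserves connectedness: granted that $\mathcal{S}(p^{-1}x)\subseteq\mathcal{S}(x)$ is connected by induction, any $\Lambda\in\mathcal{S}(x)$ is linked inside $\mathcal{S}(x)$ to $\mathcal{S}(p^{-1}x)$ by the path $\Lambda-\tilde\Lambda-\hat\Lambda$, where the auxiliary $\tilde\Lambda$ itself belongs to $\mathcal{S}(x)$ as witnessed by the pair $(\tilde\Lambda,\hat\Lambda)$ of (3).

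For general $m$ I would run the same argument along the algorithm of Theorem \ref{vielexprime}, proving by descending induction on $s$ that each of $S_{s+1}',S_s,S_s'$ is connected. The base case $S_{s_0+1}'=\mathcal{L}_{2s_0+2}$ is a Bruhat-Tits building. For the passage $S_{s+1}'\leadsto S_s$ of Step 1, $\Phi_s$ is an injection of simplicial complexes so $\Phi_s(S_{s+1}')\subseteq S_s$ stays connected, and every $\Lambda\in S_s$ is connected in $S_s$ to some vertex of this image; hence $S_s$ is connected. For the passage $S_s\leadsto S_s'$ of Step 2, for any $\Lambda\in S_s'$ the auxiliary lattice $\tilde\Lambda$ appearing in the definition lies itself in $S_s'$ (witnessed by the very pair $(\tilde\Lambda,\hat\Lambda)$ used for $\Lambda$), yielding a path $\Lambda-\tilde\Lambda-\hat\Lambda$ inside $S_s'$ ending in the connected sub-complex $S_s\subseteq S_s'$; hence $S_s'$ is connected. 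At the end of the algorithm the output $S_0=\mathcal{S}(\underline{x})$ is therefore connected, proving the theorem. The genuine content has been absorbed into the explicit construction of $\mathcal{S}(\underline{x})$ given by Theorem \ref{vielexprime}; once that is available the connectedness proof is a short combinatorial verification, whose only subtle point is the self-witnessing property of the intermediate lattices that keeps the stepping path $\Lambda-\tilde\Lambda-\hat\Lambda$ inside each sub-complex.
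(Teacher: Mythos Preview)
Your proposal is correct and follows essentially the same route as the paper: reduce to connectedness of $\mathcal{S}(\underline{x})$ via Remark \ref{bts2}, handle $m=1$ by the three cases of Theorem \ref{prime} (using the building, the map $\Phi$ plus pure-dimensionality, and induction via the self-witnessing path $\Lambda\text{--}\tilde\Lambda\text{--}\hat\Lambda$), and for general $m$ run the descending induction along the algorithm of Theorem \ref{vielexprime}, checking that each of the passages $S_{s+1}'\to S_s$ and $S_s\to S_s'$ preserves connectedness. The paper's version in Section 8 phrases the $r\geq2$ step via pure-dimensionality and Lemma \ref{upperbound} rather than directly invoking the witness pair from Theorem \ref{prime}(3), but this is only a cosmetic difference; your explicit ``self-witnessing'' observation that $\tilde\Lambda$ itself lies in the relevant sub-complex is exactly what underlies the paper's statement that ``the argument \dots\ generalizes straightforwardly''.
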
 

\noindent \bfseries Structure. \normalfont The layout of this paper is as follows. We recall in Section 2 the definition of the moduli space $\mathcal{N}=\mathcal{N}(1,n-1)$ and the description of its $\mathbb{F}$-valued points as lattices in an isocrystal with certain additional structures. We also define the hermitian $\mathbb{Q}_{p^2}$-vector space $C$. In Section 3, we review the construction of the Bruhat-Tits stratification. The results of those sections are cited from Vollaard's paper \cite{voll} and her paper with Wedhorn \cite{vw}.

In Section 4, we define special homomorphisms and special cycles, review the results of Kudla and Rapoport, prove Prop.\ \ref{dimprime} (i.e.\ generalize Thm.\ 1.1(i) and (ii) of \cite{kr} to arbitrary intersections) and show that the connectedness conjecture of Kudla and Rapoport (i.e.\ Thm. \ref{holygrail}) can be reformulated in terms of Bruhat-Tits theory.

In Sections 5-8, we deal with the case of a single special homomorphism, i.e.\ of $m=1$. Section 5 is preparatory to the proof of our main results and basically an elaboration of Sections 6 and 7 of the notes \cite{kr2}. We give a complete proof of Lemma 5.1 of \cite{kr2}, define the Kudla-Rapoport stratification on $\mathcal{Z}(x)(\mathbb{F})$ (allowing us to treat $r=0$), and generalize it to the level of vertices (a key idea in our treatment of $r\geq 1$).

In Sections 6-8, the recursive formula for computing $\mathcal{S}(x)$ is set up, with Sections 6 and 7 devoted to the two base cases. In Section 6, we review Kudla and Rapoport's proof of Thm.\ \ref{prime}(1) and of connectedness in case of a single special homomorphism of valuation 0. Section 7 is devoted to the second base case, that is, to showing Thm.\ \ref{prime}(2) and connectedness in case of a single special homomorphism of valuation 1.

In Section 8, we deal with the recursion step, i.e.\ prove Thm.\ \ref{prime}(3) and connectedness of $\mathcal{Z}(x)_\textnormal{red}$ under the assumption that $\mathcal{Z}(p^{-1}x)_\textnormal{red}$ is connected.

Finally, Section 9 is devoted to showing our claims on intersections of special cycles, i.e.\ correctness of the algorithm of Thm.\ \ref{vielexprime} and connectedness for intersections of special cycles. We will also give a new proof of Thm.\ 1.1(iii) of \cite{kr} as an application of Thm.\ \ref{vielexprime}.\\
\\
\bfseries Acknowledgements. \normalfont I would like to thank everyone who helped me write this work. Special thanks go to M.\ Rapoport for introducing me into the subject and for lots of helpful comments and discussions. I would also like to express my gratitude to E.\ Viehmann for her interest in my work and for many helpful comments. I am also indebted to S.\ Kudla for the opportunity to use his unpublished notes \cite{kr2}, which provided essential tools used in this paper.

\section{Basic definitions}

In this section, we recall the definition of the moduli space $\mathcal{N}$ with which we will work. The exposition follows \cite{kr}. All proofs are to be found in \cite{voll}.

\begin{fliess}
We fix an odd prime $p$. Let $\mathbb{F}:=\overline{\mathbb{F}}_p$. We denote by $W:=W(\mathbb{F})$ the corresponding ring of Witt vectors and by $W_\mathbb{Q}:=W\otimes_\mathbb{Z}\mathbb{Q}$ its quotient field. The Frobenius lifts to an automorphism of $W$ resp. $W_\mathbb{Q}$, which we denote by $\sigma$. There are two embeddings of $\mathbb{Z}_{p^2}$, resp.\ $\mathbb{Q}_{p^2}$, into $W$, resp. $W_\mathbb{Q}$, which we denote by $\varphi_0$ and $\varphi_1=\sigma\circ\varphi_0$. The discrete valuation on $W_\mathbb{Q}$ will be denoted by $v$.
\end{fliess}

\begin{fliess}
We also fix a positive integer $n$ and a triple
\[ (\mathbb{X},\iota_\mathbb{X},\lambda_\mathbb{X}), \]
where $\mathbb{X}$ is a supersingular $p$-divisible group of dimension $n$ and height $2n$ over $\mathbb{F}$, on which we have an action $\iota_\mathbb{X}:\mathbb{Z}_{p^2}\rightarrow\textnormal{End}(\mathbb{X})$ satisfying the signature condition $(1,n-1)$, i.e.
\[ \textnormal{charpol}(\iota_\mathbb{X}(\alpha),\textnormal{Lie}\ \mathbb{X})=(T-\varphi_0(\alpha))(T-\varphi_1(\alpha))^{n-1}\in \mathbb{F}\left[T\right]\ \forall\ \alpha\in\mathbb{Z}_{p^2}, \]
and a $p$-principal polarization $\lambda_\mathbb{X}$ for which the Rosati involution $\ast$ satisfies 
\[ \iota_\mathbb{X}(\alpha)^\ast=\iota_\mathbb{X}(\alpha^\sigma). \]
Note that such a triple always exists and is unique up to isogeny.
\end{fliess}

\begin{fliess}
Let $\textnormal{Nilp}_W$ be the category of $W$-schemes on which $p$ is locally nilpotent. Let
\[ \mathcal{N}=\mathcal{N}(1,n-1):\textnormal{Nilp}_W\rightarrow\textnormal{Sets} \]
be the functor which associates to a scheme $S\in\textnormal{Nilp}_W$ with special fibre $\overline{S}=S\times_W\mathbb{F}$ the set of isomorphism classes of quadruples
\[ (X,\iota_X,\lambda_X,\rho_X) ,\]
where $X$ is a $p$-divisible group over $S$, endowed with an $\mathbb{Z}_{p^2}$-action $\iota_X$ satisfying the signature condition $(1,n-1)$ and a $p$-principal polarization $\lambda_X$ defined over $S$ for which the Rosati involution $\ast$ satisfies $\iota_X(\alpha)^\ast=\iota_X(\alpha^\sigma)$. Finally, 
\[ \rho_X:X\times_W \mathbb{F}\rightarrow\mathbb{X}\times_\mathbb{F} \overline{S} \]
is a quasi-isogeny of height\footnote{I should point out that the definition we are using slightly differs from the definition of $\mathcal{N}$ in \cite{voll} and \cite{kr}, as they do not impose the ``height 0'' condition. The moduli functor $\mathcal{N}$ of \cite{voll} and \cite{kr} admits a direct sum decomposition $\mathcal{N}=\coprod_i\mathcal{N}_i$, where $\mathcal{N}_i$ denotes the subfunctor of $\mathcal{N}$ of isogenies of height $i$. However, the $\mathcal{N}_i$ are either empty or isomorphic to $\mathcal{N}_0$ (\cite{voll}, Lemma 1.9 and Prop.\ 1.22).} 0 compatible with the polarizations, i.e.\ locally on $\overline{S}$ one has
\[ \rho_X^\vee\circ\lambda_\mathbb{X}\circ\rho_X=\xi\lambda_X\in\textnormal{Hom}_{\mathbb{Z}_{p^2}}(X,X^\vee)\otimes\mathbb{Q} \]
for some scalar $\xi\in\mathbb{Z}_p^\times$.

Here, two such tuples $(X,\iota_X,\lambda_X,\rho_X)$ and $(Y,\iota_Y,\lambda_Y,\rho_Y)$ are said to be isomorphic if there is an isomorphism $\alpha:X\rightarrow Y$ compatible with the $\mathbb{Z}_{p^2}$-actions and the quasi-isogenies $\rho$, such that, locally on $S$, the polarization $\alpha^\vee\circ\lambda_Y\circ\alpha$ equals $\lambda_X$ up to a scalar in $\mathbb{Z}_p^\times$.
\end{fliess}

\begin{fliess}
As explained in \cite{kr}, $\mathcal{N}$ is represented by a formal scheme which we also denote by $\mathcal{N}$. This formal scheme is separated, locally formally of finite type over $W$, and formally smooth of dimension $n-1$ over $W$.

We are interested in the geometry of the reduced locus $\mathcal{N}_\textnormal{red}$ of $\mathcal{N}$. We have $\mathcal{N}(\mathbb{F})=\mathcal{N}_\textnormal{red}(\mathbb{F})$. Theorem 4.2 of \cite{vw} states that $\mathcal{N}_\textnormal{red}$ is geometrically connected of pure dimension $\lfloor(n-1)/2\rfloor$.
\end{fliess}

\begin{fliess}
Let $\mathbb{M}$ be the covariant Dieudonn\'e module of $\mathbb{X}$. Thus $\mathbb{M}$ is a free $W$-module of rank $\textnormal{rk}(\mathbb{M})=\textnormal{ht}(\mathbb{X})=2n$ endowed with a $\sigma$-linear endomorphism $F$ (the Frobenius) and a $\sigma^{-1}$-linear endomorphism $V$ (the Verschiebung), satisfying
\[ FV=VF=p\textnormal{id}_\mathbb{M}. \]
The polarization $\lambda_\mathbb{X}$ induces a perfect skew-symmetric $W$-bilinear pairing $\langle .,.\rangle$ on $\mathbb{M}$ satisfying
\[ \langle Fx,y\rangle=\langle x,Vy\rangle^\sigma\ \ \forall\ x,y\in\mathbb{M}. \]
Furthermore, the $\mathbb{Z}_{p^2}$-action $\iota_\mathbb{X}$ induces an action of $\mathbb{Z}_{p^2}$ on $\mathbb{M}$, which we denote by $\iota$. The condition on the Rosati involution translates as
\[ \langle \iota(\alpha)x,y\rangle=\langle x,\iota(\alpha^\sigma)y\rangle. \]
The decomposition $\mathbb{Z}_{p^2}\otimes_{\mathbb{Z}_p}W\cong W\oplus W$ yields a decomposition 
\[ \mathbb{M}=\mathbb{M}_0\oplus\mathbb{M}_1 \]
into eigenspaces for the $\mathbb{Z}_{p^2}$-action. The submodules $\mathbb{M}_i$ are free of rank $n$, with $\mathbb{Z}_{p^2}$ acting by scalar multiplication via $\varphi_i$. It follows that the $\mathbb{M}_i$ are isotropic with respect to $\langle .,.\rangle$ and that both $F$ and $V$ have degree 1 w.r.t.\ this decomposition. The signature condition on the $\mathbb{Z}_{p^2}$-action translates as the chain condition
\[ p\mathbb{M}_0\overset{\text{\tiny{\itshape n-\normalfont 1}}}{\subset} F\mathbb{M}_1\overset{\text{\tiny 1}}{\subset}\mathbb{M}_0,\quad p\mathbb{M}_1\overset{\text{\tiny 1}}{\subset} F\mathbb{M}_0\overset{\text{\tiny{\itshape n-\normalfont 1}}}{\subset}\mathbb{M}_1. \]
Finally, we denote by 
\[ N:=\mathbb{M}\otimes_\mathbb{Z}\mathbb{Q} \]
the isocrystal associated to $\mathbb{X}$ and remark that, by scalar extension, $N$ is endowed with a $\sigma$-linear isomorphism $F$, a $\sigma^{-1}$-linear isomorphism $V$, a non-degenerate skew-symmetric $W_\mathbb{Q}$-bilinear form $\langle .,.\rangle$ and a $\mathbb{Q}_{p^2}$-action $\iota$, which satisfy all of the properties above. The $\mathbb{Q}_{p^2}$-action induces a decomposition
\[ N=N_0\oplus N_1 \]
into $n$-dimensional $W_\mathbb{Q}$-subspaces, with $\mathbb{Q}_{p^2}$ acting on $N_i$ by scalar multiplication via $\varphi_i$.
\end{fliess}

\begin{prop}[Vollaard, \cite{voll}, Prop.\ 1.3 and Lemma 1.5]
As a point set, $\mathcal{N}(\mathbb{F})$ may be identified with the set of $W$-lattices $M\subset N$ which are stable under $F$, $V$ and $\iota$ and which satisfy both the chain condition
\[ pM_0\overset{\text{\tiny{\itshape n-\normalfont 1}}}{\subset} FM_1\overset{\text{\tiny\normalfont 1}}{\subset}M_0,\quad pM_1\overset{\text{\tiny\normalfont 1}}{\subset} FM_0\overset{\text{\tiny{\itshape n-\normalfont 1}}}{\subset}M_1, \]
with $M_i:=M\cap N_i$, and the self-duality condition $M=M^\vee$, where
\[ M^\vee:=\{x\in N\ |\ \langle x,M\rangle\subseteq W\} \]
denotes the dual lattice of $M$ w.r.t.\ $\langle.,.\rangle$.
\end{prop}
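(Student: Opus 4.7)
The plan is to carry out the standard translation via covariant Dieudonné theory, matching each datum of a quadruple $(X,\iota_X,\lambda_X,\rho_X)\in\mathcal{N}(\mathbb{F})$ against a condition on a lattice $M\subset N$.

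First I would invoke classical contravariant/covariant Dieudonné theory over $\mathbb{F}$: a $p$-divisible group $X$ over $\mathbb{F}$ together with a quasi-isogeny $\rho_X:X\to\mathbb{X}$ of height $0$ corresponds bijectively to a $W$-lattice $M\subset N$ stable under $F$ and $V$, with the extra property $\mathrm{length}_W(M/M\cap\mathbb{M})=\mathrm{length}_W(\mathbb{M}/M\cap\mathbb{M})$ encoding $\mathrm{ht}(\rho_X)=0$. The $\mathbb{Z}_{p^2}$-action $\iota_X$ corresponds, via the equivariance of $\rho_X$, to $\iota$-stability of $M$, and hence to a decomposition $M=M_0\oplus M_1$ where $M_i:=M\cap N_i$, each $M_i$ being a $W$-lattice in the $n$-dimensional space $N_i$.

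Next I would translate the signature condition $(1,n-1)$. The Lie algebra of $X$ is $M/VM$, and the signature condition asserts that $\iota_X(\alpha)$ acts on $\mathrm{Lie}(X)$ with characteristic polynomial $(T-\varphi_0(\alpha))(T-\varphi_1(\alpha))^{n-1}$; under the eigenspace decomposition this forces $\dim_\mathbb{F}(M_0/VM_1)=1$ and $\dim_\mathbb{F}(M_1/VM_0)=n-1$. Since $F$ and $V$ shift the grading by $1$ and $FV=VF=p$, writing the chain $pM_0\subseteq FM_1\subseteq M_0$ (and similarly in degree $1$) and taking lengths converts the dimension data into exactly the chain condition $pM_0\overset{n-1}{\subset}FM_1\overset{1}{\subset}M_0$, $pM_1\overset{1}{\subset}FM_0\overset{n-1}{\subset}M_1$.

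Finally I would deal with the polarization and the Rosati condition. The $p$-principal polarization $\lambda_X$ induces a perfect $W$-bilinear skew-symmetric pairing on the Dieudonné module of $X$; transported to $N$ via $\rho_X$, the compatibility condition $\rho_X^\vee\circ\lambda_\mathbb{X}\circ\rho_X=\xi\lambda_X$ with $\xi\in\mathbb{Z}_p^\times$, combined with the fact that isomorphism classes in $\mathcal{N}$ only remember $\lambda_X$ up to $\mathbb{Z}_p^\times$-scaling, means that after rescaling the pairing on $M$ agrees with $\langle\cdot,\cdot\rangle$ on $\mathbb{M}$; principality ($\lambda_X$ being an isomorphism) then becomes exactly the self-duality $M=M^\vee$. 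The Rosati-involution constraint $\iota_X(\alpha)^\ast=\iota_X(\alpha^\sigma)$ translates to $\langle\iota(\alpha)x,y\rangle=\langle x,\iota(\alpha^\sigma)y\rangle$ on $N$; this is automatic from the analogous condition on $\mathbb{M}$ and is preserved under passage to any $\iota$-stable sublattice, so imposes no further restriction on $M$ itself (it is already built into the pairing on $N$).

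The one subtle step, and the main obstacle, is the self-duality: one must verify that the height-$0$ condition on $\rho_X$, together with the $\mathbb{Z}_p^\times$-scaling freedom in the polarization, is precisely what makes the induced pairing on $M$ match $\langle\cdot,\cdot\rangle$ on the nose, yielding $M=M^\vee$ rather than $M=cM^\vee$ for some $c\in W_\mathbb{Q}^\times$. Once this bookkeeping is done — and it is the content of Prop.~1.3 and Lemma~1.5 of \cite{voll}, to which I would refer for the detailed verification — the four pieces of data $(X,\iota_X,\lambda_X,\rho_X)$ correspond exactly to a lattice $M\subset N$ satisfying $F,V,\iota$-stability, the chain condition, and $M=M^\vee$, as claimed.
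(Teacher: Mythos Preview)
The paper does not give its own proof of this proposition: at the start of Section~2 it declares ``All proofs are to be found in \cite{voll}'', and the statement is simply attributed to Vollaard (Prop.~1.3 and Lemma~1.5 of \cite{voll}). So there is no in-paper argument to compare against; your sketch is already more than the paper offers.

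Your outline is the standard and correct route: Dieudonn\'e theory turns $(X,\rho_X)$ into an $F,V$-stable lattice $M\subset N$, $\iota_X$ into $\iota$-stability and hence the grading $M=M_0\oplus M_1$, the signature condition into the chain condition, and the principal polarization plus height-$0$ plus $\mathbb{Z}_p^\times$-scaling into $M=M^\vee$. You also correctly flag the one genuinely delicate point (matching the pairing on the nose so that self-duality, not just duality up to a scalar, holds) and defer it to \cite{voll}, which is exactly what the paper does.

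One small gap worth noting: you compute $\mathrm{Lie}(X)=M/VM$ and obtain $\dim_\mathbb{F}(M_0/VM_1)=1$, $\dim_\mathbb{F}(M_1/VM_0)=n-1$, but the chain condition in the statement is phrased with $F$, namely $\dim_\mathbb{F}(M_0/FM_1)=1$ and $\dim_\mathbb{F}(M_1/FM_0)=n-1$. These are equivalent, but the passage is not entirely trivial: one uses that $V$ induces a $\sigma^{-1}$-semilinear bijection $M_1/FM_0\to VM_1/pM_0$, whence $\dim(M_0/VM_1)=n-\dim(M_1/FM_0)$, and symmetrically in the other degree. You jump over this when you say ``taking lengths converts the dimension data into exactly the chain condition''; spelling out this $F\leftrightarrow V$ swap would make the sketch watertight.
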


\begin{fliess}
Define
\[ \tau:=V^{-1}F=pV^{-2}=p^{-1}F^2. \]
This defines a $\sigma^2$-linear automorphism of $N$ of degree 0, having all Newton slopes 0. Let $C=N_0^{\tau=1}$ be the space of $\tau$-invariants. This is a $\mathbb{Q}_{p^2}$-vector space of dimension $n$, and we regain $N_0$ from $C$ by base change to $W_\mathbb{Q}$.

Fix a trace zero element $\delta\in\mathbb{Z}_{p^2}^\times$, i.e.\ $\delta^\sigma=-\delta$. We define a $W_\mathbb{Q}$-sesquilinear (w.r.t.\ $\sigma$) form $h$ on $N$ by
\[ h(x,y):=p^{-1}\delta^{-1}\langle x,Fy\rangle. \]
This satisfies $h(\tau x,\tau y)=h(x,y)^{\sigma^2}$ and thus defines a $\mathbb{Q}_{p^2}$-valued hermitian form on $C$, which one checks to be non-degenerate. Note that, since $p$ is odd and $\textnormal{Nm}(\mathbb{Q}_{p^2})=\{\alpha\in\mathbb{Q}_p\ |\ v(\alpha)\equiv 0\ (2)\}$ (where $\textnormal{Nm}$ denotes the norm of $\mathbb{Q}_{p^2}/\mathbb{Q}_p$), non-degenerate hermitian forms on finite-dimensional $\mathbb{Q}_{p^2}$-vector spaces are classified by the parity of the valuation of their determinant (see \cite{jack}, Thm.\ 3.1). In our situation, the signature condition for $\mathbb{X}$ implies that the valuation of the determinant of $h$ is odd. For a $W$-lattice $L$ in $N_0$, denote by $L^\sharp$ the dual of $L$ w.r.t.\ $h$, i.e.\
\[ L^\sharp:=\{ x\in N_0\ |\ h(x,L)\subseteq W\}. \]
One checks $L^\sharp=(V^{-1}L)^\vee=FL^\vee$. Note that taking the dual w.r.t.\ $h$ is not an involution on the set of $W$-lattices in $N_0$. Indeed, one has $L^{\sharp\sharp}=\tau L$, thus the self-dual lattices in $N_0$ are the $\tau$-invariant ones, i.e.\ those which arise from $\mathbb{Z}_{p^2}$-lattices in $C$ via scalar extension.\\
\end{fliess}

\begin{prop}[Vollaard, \cite{voll}, Prop.\ 1.12] \label{nnull}
$\mathcal{N}(\mathbb{F})$ may be identified with the set of $W$-lattices $A\subset N_0$ satisfying the chain condition
\[ pA\overset{\text{\tiny\normalfont 1}}{\subset}A^\sharp\overset{\text{\tiny{\itshape n-\normalfont 1}}}{\subset}A. \]
\end{prop}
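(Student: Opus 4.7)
The plan is to identify $M \in \mathcal{N}(\mathbb{F})$ with its $\iota$-eigenspace component $A := M_0 = M \cap N_0$, and to show that this map is a bijection onto the set of $W$-lattices satisfying the chain condition. The first step is to observe that the Rosati-compatibility $\langle \iota(\alpha)x,y\rangle = \langle x,\iota(\alpha^\sigma)y\rangle$ forces both $N_0$ and $N_1$ to be totally isotropic with respect to $\langle\cdot,\cdot\rangle$: for $x,y \in N_i$ and any $\alpha \in \mathbb{Z}_{p^2}$, the identity gives $\varphi_i(\alpha)\langle x,y\rangle = \varphi_{1-i}(\alpha)\langle x,y\rangle$, forcing $\langle x,y\rangle = 0$. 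Consequently $\langle\cdot,\cdot\rangle$ induces a perfect pairing between $N_0$ and $N_1$. Any $\iota$-stable $W$-lattice $M$ decomposes automatically as $M_0 \oplus M_1$, and the self-duality $M = M^\vee$ is then equivalent to $M_1 = M_0^\vee$, where $M_0^\vee := \{y \in N_1 \mid \langle y, M_0\rangle \subset W\}$. In particular, $M$ is uniquely determined by $A = M_0$.

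The second step is to translate the chain condition on $M$ into one on $A$, using the identity $L^\sharp = FL^\vee$ for $W$-lattices $L \subset N_0$ stated in the paragraph preceding the proposition. Its proof is short: writing any $x \in N_0$ as $x = Fy$ with $y \in N_1$ (using that $F : N_1 \to N_0$ is a $\sigma$-linear isomorphism) and applying $\langle Fa,b\rangle = \langle a,Vb\rangle^\sigma$, one computes
\[ \langle Fy, FL\rangle = \langle y, VFL\rangle^\sigma = p\langle y, L\rangle^\sigma, \]
so $x \in L^\sharp \Longleftrightarrow \langle x, FL\rangle \subset pW \Longleftrightarrow \langle y, L\rangle \subset W \Longleftrightarrow y \in L^\vee$. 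Applied to $L = A$, this yields $A^\sharp = FM_1$, whereupon the chain condition $pM_0 \subset FM_1 \subset M_0$ on $M$ translates directly into the chain condition on $A$ claimed in the proposition.

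For the converse, given $A$ satisfying the chain condition, I set $M_0 := A$, $M_1 := A^\vee \subset N_1$, and $M := M_0 \oplus M_1$. By construction $M$ is $\iota$-stable and self-dual; its $F$- and $V$-stability, together with the chain condition on $M$ itself, follow from the chain on $A$ by applying $L^\sharp = FL^\vee$ (and its analogue for lattices in $N_1$) and using the relations $FV = VF = p$.

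The main technical obstacle is bookkeeping: one must carefully distinguish the two branches of $F$ (and likewise $V$) between $N_0$ and $N_1$, and verify that both halves of the chain condition on $M$—the part involving $M_0$ and the part involving $M_1$—reduce to the single chain condition on $A$. Once the identity $L^\sharp = FL^\vee$ is in place, the remaining reductions are purely formal.
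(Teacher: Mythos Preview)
Your proof is correct and follows essentially the same approach as the paper: you identify $M$ with $A = M_0$, use self-duality to recover $M_1 = M_0^\vee = F^{-1}A^\sharp$, and translate the chain condition via the identity $L^\sharp = FL^\vee$. The paper's proof is terser---it simply asserts $M_1 = M_0^\vee = F^{-1}M_0^\sharp$ and says the chain condition ``translates''---whereas you supply the details the paper omits (isotropy of $N_0,N_1$, the verification of $L^\sharp = FL^\vee$), but the structure is identical.
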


\begin{proof}
Associate to a lattice $M=M_0\oplus M_1$ in $\mathcal{N}(\mathbb{F})$ the lattice 
\[ A:=M_0 \]
in $N_0$. As $M$ is self-dual, we have $M_1=M_0^\vee=F^{-1}M_0^\sharp$. Thus, the chain condition for $M$ translates into the chain condition claimed in the proposition.\\
On the other hand, associate to a given $A\subset N_0$ with the imposed properties a lattice $M\in\mathcal{N}(\mathbb{F})$ by setting
\[ M_0:=A,\quad M_1:=F^{-1}A^\sharp. \]
These constructions are clearly inverse to each other.
\end{proof}

\section{The Bruhat-Tits stratification}

In this section, we recall Vollaard's and Wedhorn's construction of the Bruhat-Tits stratification of $\mathcal{N}_\textnormal{red}$. For proofs, see \cite{voll} and \cite{vw}.

\begin{defin}
For any odd integer $1\leq t\leq n$, set
\[ \mathcal{L}^{(t)}:=\{\Lambda\subset C\ \textnormal{a}\ \mathbb{Z}_{p^2}\textnormal{-lattice}\ |\ p\Lambda\subseteq\Lambda^\sharp\overset{\text{\tiny{\itshape t}}}{\subset}\Lambda \}. \]
$\mathcal{L}^{(t)}$ will be called the set of \itshape vertices of type $t$\normalfont.

For notational purposes, set
\[ \mathcal{L}:=\bigcup\nolimits_t \mathcal{L}^{(t)}. \]
For any vertex $\Lambda$, the integer $t(\Lambda)$ will always denote the type of $\Lambda$. Furthermore $t_{\max}$ will denote the maximal type occurring, i.e.\ the maximal odd integer less than or equal to $n$, and we let $\mathcal{L}^{\max}:=\mathcal{L}^{\left(t_{\max}\right)}$.
\end{defin}

The following proposition justifies our use of the word ``vertex''.

\begin{prop}[Vollaard, \cite{voll}, Thm.\ 3.6]\label{build}
The set $\mathcal{L}$ is canonically in bijection with the set of vertices of the Bruhat-Tits building $\mathfrak{B}(SU(C,ph),\mathbb{Q}_p)$. Given two lattices $\Lambda\neq\tilde\Lambda\in\mathcal{L}$, one of them contains the other if and only if the corresponding vertices neighbour each other in the simplicial complex of $\mathfrak{B}(SU(C,ph),\mathbb{Q}_p)$.
\end{prop}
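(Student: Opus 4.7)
The plan is to realize the Bruhat-Tits building $\mathfrak{B}(G,\mathbb{Q}_p)$ of $G:=SU(C,ph)$ concretely via lattices, using the standard lattice-chain description of Bruhat-Tits theory for classical groups. The rescaling from $h$ to $ph$ makes the combinatorics transparent: writing $\sharp'$ for the dual with respect to $ph$, one has $\Lambda^{\sharp'}=p^{-1}\Lambda^\sharp$, so the defining condition $p\Lambda\subseteq\Lambda^\sharp\subseteq\Lambda$ of $\mathcal{L}$ rewrites as the almost self-duality condition $\Lambda\subseteq\Lambda^{\sharp'}\subseteq p^{-1}\Lambda$ for $ph$. By Bruhat-Tits theory for unramified unitary groups, almost self-dual lattices are precisely the lattices whose stabilizers in $G(\mathbb{Q}_p)$ are maximal parahoric subgroups.

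First I would show that $\Lambda\mapsto P_\Lambda:=\mathrm{Stab}_{G(\mathbb{Q}_p)}(\Lambda)$ gives a bijection between $\mathcal{L}$ and the vertex set of $\mathfrak{B}(G,\mathbb{Q}_p)$. Injectivity: if $P_\Lambda=P_{\tilde\Lambda}$ for $\Lambda,\tilde\Lambda\in\mathcal{L}$, then $\tilde\Lambda$ is homothetic to $\Lambda$ and the almost self-duality forces the scaling factor to be a unit, whence $\tilde\Lambda=\Lambda$. Surjectivity: the $G(\mathbb{Q}_p)$-conjugacy classes of maximal parahorics are indexed by nodes of the local Dynkin diagram of $G$, and the type $t(\Lambda):=\mathrm{length}_{\mathbb{Z}_{p^2}}(\Lambda/\Lambda^\sharp)$ is a conjugacy invariant; a Smith normal form computation gives $t(\Lambda)\equiv\mathrm{ord}\det(C,h)\equiv 1\pmod 2$, and I would verify that each odd $1\leq t\leq n$ is realized by exactly one $G(\mathbb{Q}_p)$-orbit, accounting for every vertex.

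For the incidence relation I would fix an apartment $A=A(S)$ associated to a maximal $\mathbb{Q}_p$-split torus $S\subset G$. A hyperbolic decomposition $(C,h)=H_1\perp\cdots\perp H_s\perp C_0$ adapted to $S$ parametrizes $S$-stable lattices in $\mathcal{L}$ by tuples of integers encoding the scaling along each hyperbolic pair of basis vectors; the simplicial structure on $A$ is read off directly from these tuples, and two such lattices span a 1-simplex iff their tuples differ in exactly one coordinate by one, which is visibly equivalent to one lattice being contained in the other. The general case follows by transporting along the $G(\mathbb{Q}_p)$-action, using that any two vertices of $\mathfrak{B}(G,\mathbb{Q}_p)$ lie in a common apartment.

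The main obstacle is the careful identification of the local Dynkin diagram of $G$, since the hypothesis that $\mathrm{ord}\det(C,h)$ is odd makes $G$ a non-quasi-split inner form of the unramified unitary group, so the shape of the diagram depends on the parity of $n$ and differs from the quasi-split case. A case analysis in $n$ even versus odd is needed to ensure that \emph{every} node of the diagram is realized by a lattice of odd type in $\mathcal{L}$. A related subtlety is the distinction between parahoric subgroups of $U(C,ph)$ and of $SU(C,ph)$, which must be reconciled so that $\Lambda\mapsto P_\Lambda$ exhausts all $SU$-vertices rather than only those inherited from $U$.
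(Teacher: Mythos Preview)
The paper does not give its own proof of this proposition: it is stated with attribution to Vollaard (\cite{voll}, Thm.\ 3.6) and no argument is supplied. So there is nothing in the present paper to compare your attempt against.

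That said, your outline is a reasonable sketch of the standard route to such a statement and is in the spirit of what Vollaard does: pass to the form $ph$ so that the chain condition becomes an almost self-duality condition, and then invoke the lattice-chain description of parahorics for classical groups to identify $\mathcal{L}$ with the vertex set of $\mathfrak{B}(SU(C,ph),\mathbb{Q}_p)$. Your identification $\Lambda^{\sharp'}=p^{-1}\Lambda^\sharp$ and the rewriting $\Lambda\subseteq\Lambda^{\sharp'}\subseteq p^{-1}\Lambda$ are correct, and you correctly flag the genuine subtleties (the group is the non-quasi-split inner form since $\mathrm{ord}\det(C,h)$ is odd, so the local Dynkin diagram and the type count depend on the parity of $n$; and one must reconcile $U$ versus $SU$ parahorics). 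One point in your injectivity step is too quick: equality of stabilizers does not by itself force two lattices to be homothetic; rather, one should use that in the lattice model of the building a maximal parahoric fixes a unique vertex, or equivalently recover $\Lambda$ from $P_\Lambda$ as the unique lattice in its self-dual chain fixed by $P_\Lambda$. With that adjustment your plan would go through, but for the purposes of this paper a citation to \cite{voll} is all that is required.
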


Note that we will frequently consider $\mathcal{L}$ as a simplicial complex with the simplicial complex structure induced by this bijection.

\begin{defin}
Let $\Lambda\in\mathcal{L}$. Set
\[ \mathcal{V}(\Lambda):=\{A\in\mathcal{N}(\mathbb{F})\ |\ A\subseteq\Lambda_W\}. \]
Here, points of $\mathcal{N}(\mathbb{F})$ are viewed as $W$-lattices in $N_0$ via Prop.\ \ref{nnull}, and $\Lambda_W$ denotes the scalar extension $\Lambda\otimes_{\mathbb{Z}_{p^2}}W$, which is a $W$-lattice in $N_0$.
\end{defin}

\begin{fliess}
Let $\Lambda\in\mathcal{L}$ be of type $t$. Vollaard and Wedhorn show that $\mathcal{V}(\Lambda)$ is the set of $\mathbb{F}$-valued points of a $(t-1)/2$-dimensional irreducible smooth projective subvariety of $\mathcal{N}_\textnormal{red}$ over $\mathbb{F}$, which we denote by $\mathcal{N}_\Lambda$ (\cite{vw}, Eqn.\ 3.3.2 and Cor.\ 3.11). They also show the following facts about the $\mathcal{V}(\Lambda)$ in \cite{vw}:\smallskip\\
(1) Thm.\ 4.1(4): $\mathcal{N}(\mathbb{F})=\bigcup_\mathcal{L}\mathcal{V}(\Lambda)$.\smallskip\\
(2) Thm.\ 4.1(1): $\mathcal{V}(\Lambda)\subset\mathcal{V}(\tilde\Lambda)\Leftrightarrow\Lambda\subset\tilde\Lambda$.\smallskip\\
(3) Thm.\ 4.1(2): $\mathcal{V}(\Lambda)\cap\mathcal{V}(\tilde\Lambda)$ is nonempty if and only if $\Lambda\cap\tilde\Lambda$ is a vertex, in which case it equals $\mathcal{V}(\Lambda\cap\tilde\Lambda)$.\smallskip\\
(4) Thm.\ 4.2(2): The irreducible components of $\mathcal{N}_\textnormal{red}$ are precisely the $\mathcal{N}_\Lambda$ corresponding to the $\Lambda$ of maximal type $t_{\max}$.\smallskip

Furthermore they show (\cite{vw}, Prop.\ 4.3) that we have a stratification of $\mathcal{N}_\textnormal{red}$ by the locally-closed subvarieties
\[ \mathcal{N}_\Lambda^\circ:=\mathcal{N}_\Lambda-\bigcup_{\tilde\Lambda\subsetneq\Lambda}\mathcal{N}_{\tilde\Lambda}, \]
the so-called \itshape Bruhat-Tits stratification\normalfont.
\end{fliess}

\begin{fliess}[Vollaard, \cite{voll}, Cor.\ 2.10] \label{muff}
Let $\Lambda$ be a vertex of arbitrary type $t$. We will need a convenient description of the set of Bruhat-Tits strata contained in $\mathcal{V}(\Lambda)$ in terms of linear algebra over finite fields. To achieve this, set $V:=\Lambda/\Lambda^\sharp$. This is a $t$-dimensional $\mathbb{F}_{p^2}$-vector space endowed with a non-degenerate hermitian form $\overline{ph}$ induced by $ph$. For any odd $\tilde{t}\leq t$, the map
\begin{align*}
\{\tilde\Lambda\in\mathcal{L}^{(\tilde{t})}\ |\ \tilde\Lambda\subseteq\Lambda\} &\rightarrow \{U\subset V\ |\ \dim U=(t-\tilde{t})/2, U\textnormal{ isotropic for}\ \overline{ph}\}\\
\tilde\Lambda &\mapsto \tilde\Lambda^\sharp/\Lambda^\sharp=(\tilde\Lambda/\Lambda^\sharp)^\perp
\end{align*}
is a bijection.

In the same fashion, we get a description of the set of Bruhat-Tits strata containing $\mathcal{V}(\Lambda)$. We just consider the $(n-t)$-dimensional non-degenerate hermitian $\mathbb{F}_{p^2}$-vector space $(V',\overline{h})$, where $V':=\Lambda^\sharp/p\Lambda$ and where $\overline{h}$ is induced by $h$. We then have a bijection
\begin{align*}
\{\tilde\Lambda\in\mathcal{L}^{(\tilde{t})}\ |\ \Lambda\subseteq\tilde\Lambda\} &\leftrightarrow \{U\subset V'\ |\ \dim U=(\tilde{t}-t)/2, U\textnormal{ isotropic for}\ \overline{h}\}\\
\tilde\Lambda &\mapsto p\tilde\Lambda/p\Lambda=(\tilde\Lambda^\sharp/p\Lambda)^\perp.
\end{align*}
\end{fliess}

\begin{fliess}\label{keepyourdistance}
We will use the following generalization of the distance function on the simplicial complex of $\mathfrak{B}(SU(C),\mathbb{Q}_p)$ introduced in Terstiege's paper \cite{ter}. Let $\mathcal{S}$ be any subset of $\mathcal{L}$. Set $\mathcal{N}(\mathcal{S}):=\bigcup_{\tilde\Lambda\in\mathcal{S}} \mathcal{N}_{\tilde\Lambda}$. Let $\Lambda\in\mathcal{L}^{\max}$. We define the distance $d(\Lambda,\mathcal{S})$ to be 0 if $\Lambda\in\mathcal{S}$. Otherwise $d(\Lambda,\mathcal{S})$ is defined to be the minimal positive integer $d$ for which there exists a sequence $\Lambda_1,\dots,\Lambda_d=\Lambda$ of vertices of maximal type such that $\mathcal{N}_{\Lambda_1}$ intersects $\mathcal{N}(\mathcal{S})$ non-trivially (i.e.\ there is a vertex in $\mathcal{S}$ whose intersection with $\Lambda_1$ is again a vertex) and such that the irreducible components $\mathcal{N}_{\Lambda_i}$, $\mathcal{N}_{\Lambda_{i+1}}$ of $\mathcal{N}_\textnormal{red}$ intersect non-trivially (i.e.\ the intersections $\Lambda_i\cap\Lambda_{i+1}$ are vertices) for all $1\leq i\leq d-1$.
\end{fliess}

\section{Special cycles}

In this section, we recall the notions of special homomorphisms and special cycles, as defined by Kudla and Rapoport in \cite{kr}. We will then show Prop.\ \ref{dimprime} and reduce our main theorem \ref{holygrail} to a question about connectedness of a certain simplicial subcomplex of $\mathcal{L}$ and thus to Bruhat-Tits theory.

\begin{fliess}
Let $(\overline{\mathbb{Y}},\iota_{\overline{\mathbb{Y}}},\lambda_{\overline{\mathbb{Y}}})$ be the triple (unique up to isogeny) consisting of a supersingular $p$-divisible group $\overline{\mathbb{Y}}$ of dimension 1 and height 2 over $\mathbb{F}$, a $\mathbb{Z}_{p^2}$-action $\iota_{\overline{\mathbb{Y}}}$ on $\overline{\mathbb{Y}}$ satisfying the signature condition (0,1) (i.e.\ inducing the action by scalar multiplication via $\varphi_1$ on $\textnormal{Lie}\ \overline{\mathbb{Y}}$) and a $p$-principal polarization $\lambda_{\overline{\mathbb{Y}}}$ for which the Rosati involution satisfies $\iota_{\overline{\mathbb{Y}}}(\alpha)^\ast=\iota_{\overline{\mathbb{Y}}}(\alpha^\sigma)$. Note that $(\overline{\mathbb{Y}},\iota_{\overline{\mathbb{Y}}})$ admits a unique lift to $W$ as a formal $\mathbb{Z}_{p^2}$-module, which we denote by $(\overline{Y},\iota_{\overline{Y}})$ (\cite{kr}, Remark 2.5). 
\end{fliess}

\begin{defin}[Kudla-Rapoport, \cite{kr}, Def.\ 3.1]
The space of special homomorphisms is defined to be the $\mathbb{Q}_{p^2}$-vector space
\[ \mathbb{V}:=\textnormal{Hom}_{\mathbb{Z}_{p^2}}(\overline{\mathbb{Y}},\mathbb{X})\otimes_\mathbb{Z}\mathbb{Q}, \]
endowed with the non-degenerate $\mathbb{Q}_{p^2}$-valued hermitian form $h$ given by
\[ h(x,y):=\lambda_{\overline{\mathbb{Y}}}^{-1}\circ y^\vee\circ\lambda_\mathbb{X}\circ x\in\textnormal{End}_{\mathbb{Z}_{p^2}}(\overline{\mathbb{Y}})\otimes_\mathbb{Z}\mathbb{Q}, \]
identifying $\textnormal{End}_{\mathbb{Z}_{p^2}}(\overline{\mathbb{Y}})\otimes\mathbb{Q}$ with $\mathbb{Q}_{p^2}$ via the isomorphism $\iota_{\overline{\mathbb{Y}}}$.
\end{defin}

\begin{defin}[Kudla-Rapoport, \cite{kr}, Def.\ 3.2]
Let $1\leq m\leq n$ be an integer. Let $\underline{x}=(x_1,\dots,x_m)\in\mathbb{V}^m$ be a tuple of special homomorphisms. The special cycle associated to $\underline{x}$, denoted $\mathcal{Z}(\underline{x})$, is the formal subscheme of $\mathcal{N}$ associating to a scheme $S\in\textnormal{Nilp}_W$ the set of tuples $(X,\iota_X,\lambda_X,\rho_X)\in\mathcal{N}(S)$ for which all the quasi-homomorphisms
\[ (\overline{Y}\times_W S)\times_W\mathbb{F}= \overline{\mathbb{Y}}\times_\mathbb{F}\overline{S}\overset{x_i}{\longrightarrow}\mathbb{X}\times_\mathbb{F}\overline{S}\overset{\rho_X^{-1}}{\longrightarrow}X\times_W\mathbb{F} \]
lift to $\mathbb{Z}_{p^2}$-linear homomorphisms $\overline{Y}\times_W S\rightarrow X$.

In the case of a single special homomorphism $x\in\mathbb{V}$, we write
\[ \mathcal{Z}(x):=\mathcal{Z}((x)). \]
Given two tuples $\underline{x}=(x_1,\dots,x_m)$ and $\underline{z}=(z_1,\dots,z_l)$, we will occasionally write 
\[ \mathcal{Z}(\underline{x},\underline{z}):=\mathcal{Z}((x_1,\dots,x_m,z_1,\dots,z_l)). \]
\end{defin}

\begin{defin}
For $\underline{x}\in\mathbb{V}^m$, the fundamental matrix $T(\underline{x})$ is the hermitian matrix $(h(x_i,x_j))_{i,j}\in\textnormal{Herm}_m(\mathbb{Q}_{p^2})$.
\end{defin}

\begin{fliess}
It is obvious from the definition that $\mathcal{Z}(\underline{x})\cap\mathcal{Z}(\underline{z})=\mathcal{Z}(\underline{x},\underline{z})$ for any tuples $\underline{x},\underline{z}$ of special homomorphisms.

Kudla and Rapoport prove in \cite{kr}, Prop.\ 3.5, that for any special homomorphism $x\neq 0$, the special cycle $\mathcal{Z}(x)$ is either empty or a relative effective divisor in $\mathcal{N}$, in particular a closed formal subscheme of $\mathcal{N}$.

By Lemma 3.9 of \cite{kr}, we may identify $(\mathbb{V},h)$ with $(C,h)$, and this induces an identification
\[ \mathcal{Z}(\underline{x})(\mathbb{F})=\{A\in\mathcal{N}(\mathbb{F})\ |\ x_i\in A^\sharp\ \forall i\}, \]
where $\underline{x}=(x_1,\dots,x_m)$ is considered as a tuple of special homomorphisms on the left hand side and as a tuple of elements of $C$ on the right hand side. In particular, $\mathcal{Z}(x)_\textnormal{red}$ is nonempty if and only if all entries of the fundamental matrix are integral, i.e.\ $T(\underline{x})\in\textnormal{Herm}_m(\mathbb{Z}_{p^2})$.
\end{fliess}

\begin{fliess} \label{fleshwound}
Whenever we talk about ``a tuple $\underline{x}\in\mathbb{V}^m$ of special homomorphisms'', we will simplify notation and exclude pathological cases by making the following assumptions on $\underline{x}$ unless specified otherwise.

First, we assume that $T(\underline{x})\in\textnormal{Herm}_m(\mathbb{Z}_{p^2})$. Furthermore, the action $\iota_{\overline{\mathbb{Y}}}$ defines an action of $GL_m(\mathbb{Z}_{p^2})$ on $\overline{\mathbb{Y}}^m$, which induces a right action of $GL_m(\mathbb{Z}_{p^2})$ on $\mathbb{V}^m$ by precomposition. One has $\mathcal{Z}(\underline{x})=\mathcal{Z}(\underline{x}.g)$ for any $g\in GL_m(\mathbb{Z}_{p^2})$, i.e.\ the special cycle $\mathcal{Z}(\underline{x})$ does depend on the orbit of $\underline{x}$ under the $GL_m(\mathbb{Z}_{p^2})$-action only (\cite{kr}, Remark 3.3(i)). One then checks
\[ T(\underline{x}.g)=g^t\cdot T(\underline{x})\cdot g^{(\sigma)}. \]
As $p$ is odd and $\mathbb{Q}_{p^2}$ is unramified over $\mathbb{Q}_p$, each orbit of the right $GL_m(\mathbb{Z}_{p^2})$-action on $\textnormal{Herm}_m(\mathbb{Z}_{p^2})$ given by this formula has a representative of diagonal form (\cite{jack}, Sect.\ 7). Thus we may and will assume that $T(\underline{x})$ has diagonal form, i.e.\ that the $x_i$ are $h$-perpendicular to each other.

We may also assume that $T(\underline{x})$ is nonsingular\footnote{If $m=n$, then this is the case where the cycles do not meet in the generic fiber in the global situation. See \cite{kr3}, Lemma 2.21.}, because otherwise there is some $i$ such that $x_i=0$ and then $\mathcal{Z}(x_i)=\mathcal{N}$ does not give any contribution.

In the following, we write $r(x_i):=v(h(x_i,x_i))$ and refer to this number (which, by our assumptions, is a nonnegative integer) as the ``valuation of $x_i$''.
\end{fliess}

In the remaining part of this section, we show Prop.\ \ref{dimprime}, i.e.\ that $\mathcal{Z}(\underline{x})_\textnormal{red}$ is a union of closed Bruhat-Tits strata $\mathcal{N}_\Lambda$ and that it is pure of some dimension only depending on the number of $x_i$ of valuation 0 involved.

\begin{lem} \label{ausbr}
Let $\underline{x}=(x_1,\dots,x_m)\in\mathbb{V}^m$. Let $\Lambda\in\mathcal{L}$ be a vertex such that the set $\mathcal{V}(\Lambda)$ of $\mathbb{F}$-valued points of the closed Bruhat-Tits stratum $\mathcal{N}_\Lambda$ intersects $\mathcal{Z}(\underline{x})(\mathbb{F})$ non-trivially. Then $\mathcal{V}(\Lambda)\subseteq\mathcal{Z}(p\underline{x})(\mathbb{F})$.
\end{lem}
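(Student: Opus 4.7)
The plan is to translate the statement entirely into a lattice computation and then chase inclusions, making critical use of the fact that $\Lambda$ satisfies the vertex chain condition $p\Lambda\subseteq\Lambda^\sharp\subseteq\Lambda$. The extra factor of $p$ appearing in the conclusion (why we get $\mathcal{Z}(p\underline{x})$ rather than $\mathcal{Z}(\underline{x})$) is precisely explained by this chain condition.

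First, I would fix a point $A_0\in\mathcal{V}(\Lambda)\cap\mathcal{Z}(\underline{x})(\mathbb{F})$, which exists by hypothesis. Unwinding definitions, this means $A_0\subseteq\Lambda_W$ while $x_i\in A_0^\sharp$ for every $i$. The moduli chain condition of Prop.\ \ref{nnull} gives $A_0^\sharp\subseteq A_0$, so $x_i\in A_0\subseteq\Lambda_W$. Since each $x_i$ lies in $C$ and $\Lambda_W\cap C=\Lambda$ (because $\Lambda_W=\Lambda\otimes_{\mathbb{Z}_{p^2}}W$ and scalar extension from $\mathbb{Z}_{p^2}$ to $W$ is faithfully flat), this forces $x_i\in\Lambda$. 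Now invoking the vertex chain condition $p\Lambda\subseteq\Lambda^\sharp$ upgrades the membership to $px_i\in\Lambda^\sharp$ for every $i$.

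Second, I would propagate this information to every point of the stratum. Let $A\in\mathcal{V}(\Lambda)$ be arbitrary, so $A\subseteq\Lambda_W$. Taking $h$-duals in $N_0$ reverses inclusions, giving $\Lambda_W^\sharp\subseteq A^\sharp$. Formation of the $h$-dual commutes with scalar extension from $\mathbb{Z}_{p^2}$ to $W$, i.e.\ $\Lambda_W^\sharp=\Lambda^\sharp\otimes_{\mathbb{Z}_{p^2}}W$; hence $px_i\in\Lambda^\sharp\subseteq\Lambda_W^\sharp\subseteq A^\sharp$. By the point-set description of special cycles recalled in section~4, this is exactly the condition $A\in\mathcal{Z}(p\underline{x})(\mathbb{F})$.

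Since the argument is a one-step unwinding of the lattice descriptions, I do not foresee any serious obstacle; the only technical point worth verifying carefully is the identity $\Lambda_W^\sharp=\Lambda^\sharp\otimes_{\mathbb{Z}_{p^2}}W$, which is a standard consequence of the non-degeneracy of $h$ and can be checked on any $\mathbb{Z}_{p^2}$-basis of $\Lambda$.
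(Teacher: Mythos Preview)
Your proof is correct and follows essentially the same chain of inclusions as the paper's argument: from $x_i\in A_0^\sharp\subseteq A_0\subseteq\Lambda_W$ one extracts $px_i\in\Lambda^\sharp$, and then for arbitrary $A\in\mathcal{V}(\Lambda)$ the inclusion $\Lambda^\sharp\subseteq A^\sharp$ gives the result. The only difference is that you are more careful than the paper in distinguishing $\Lambda$ from $\Lambda_W$ and in isolating the compatibility $\Lambda_W^\sharp=(\Lambda^\sharp)_W$, which the paper uses implicitly.
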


\begin{proof}
Let $\Lambda\in\mathcal{L}$ be as assumed. Let $A\in\mathcal{V}(\Lambda)\cap\mathcal{Z}(\underline{x})(\mathbb{F})$. By definition, we have $x_i\in A\subseteq\Lambda\subseteq p^{-1}\Lambda^\sharp$ and thus $px_i\in\Lambda^\sharp$. But as $A\in\mathcal{V}(\Lambda)$ implies $\Lambda^\sharp\subseteq A^\sharp$, the condition $px_i\in\Lambda^\sharp$ is sufficient for $\mathcal{V}(\Lambda)\subseteq\mathcal{Z}(p\underline{x})$.
\end{proof}

\begin{fliess}
Let $\underline{x}\in\mathbb{V}^m$. We fix the following notation:
\[ \mathcal{S}(\underline{x}):=\{\Lambda\in\mathcal{L}\ |\ \mathcal{V}(\Lambda)\subseteq\mathcal{Z}(x)(\mathbb{F})\}. \]
$\mathcal{S}(\underline{x})$ will be viewed as a simplicial subcomplex of $\mathcal{L}$. Note that
\[ \Lambda\in\mathcal{S}(\underline{x})\Leftrightarrow x_i\in\Lambda^\sharp\ \forall i. \]
We will later also use the notations $\mathcal{S}(\underline{x})^{(t)}:=\mathcal{S}(\underline{x})\cap\mathcal{L}^{(t)}$ and $\mathcal{S}(\underline{x})^{\max}:=\mathcal{S}(\underline{x})^{(t_{\max})}$.
\end{fliess}

\begin{prop} \label{bts}
Let $\underline{x}\in\mathbb{V}^m$. We have the equality
\[ \mathcal{Z}(\underline{x})_\textnormal{red}=\bigcup_{\Lambda\in\mathcal{S}(\underline{x})}\mathcal{N}_\Lambda. \]
In other words, $\mathcal{Z}(\underline{x})_\textnormal{red}$ is a union of Bruhat-Tits strata.
\end{prop}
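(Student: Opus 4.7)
The strategy is to verify the equality on $\mathbb F$-points, which suffices since both $\mathcal Z(\underline x)_{\textnormal{red}}$ and $\bigcup_{\Lambda\in\mathcal S(\underline x)}\mathcal N_\Lambda$ are reduced closed subschemes of $\mathcal N_{\textnormal{red}}$. The inclusion $\supseteq$ is immediate: for $\Lambda\in\mathcal S(\underline x)$ and $A\in\mathcal V(\Lambda)$, dualizing $A\subseteq\Lambda_W$ yields $x_i\in\Lambda^\sharp\subseteq\Lambda_W^\sharp\subseteq A^\sharp$ for every $i$, so $A\in\mathcal Z(\underline x)(\mathbb F)$.

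For the reverse inclusion, given $A\in\mathcal Z(\underline x)(\mathbb F)$ I would construct a single vertex $\Lambda$ with $A\in\mathcal V(\Lambda)$ and $x_i\in\Lambda^\sharp$ for all $i$. The candidate is the $\mathbb Z_{p^2}$-lattice $\Lambda$ in $C$ whose extension of scalars to $W$ equals
\[
L := \sum_{k\in\mathbb Z}\tau^k A,
\]
the smallest $\tau$-stable $W$-submodule of $N_0$ containing $A$. That $L$ is a genuine $W$-lattice follows from the fact that any vertex $\tilde\Lambda$ containing $A$ --- one exists by Theorem 4.1(4) of \cite{vw} --- is $\tau$-stable and therefore contains every $\tau^k A$, hence $L$.

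The main step is to verify that $L$ actually satisfies the vertex condition. The chain $L^\sharp\subseteq A^\sharp\subseteq A\subseteq L$ (from $A\subseteq L$ and the Dieudonn\'e condition $A^\sharp\subseteq A$), combined with the $\tau$-isomorphism $L/A\cong L/\tau A$ (induced by $\tau L = L$) and the identity $L^{\sharp\sharp}=\tau L$, yields the length formula
\[
\ell_W(M/M^\sharp) = 2\,\ell_W(M/N) + \ell_W(N/N^\sharp)
\]
valid whenever $N\subseteq M$ are $\tau$-stable $W$-lattices with $N^\sharp\subseteq N$. Applying this once with $(M,N)=(L,A)$ to control the parity, and once with $(M,N)=(\tilde\Lambda_W,L)$ to compare with the vertex $\tilde\Lambda$, gives
\[
\ell_W(L/L^\sharp) = t(\tilde\Lambda) - 2\,\ell_W(\tilde\Lambda_W/L),
\]
so $\ell_W(L/L^\sharp)$ shares the parity of the odd integer $t(\tilde\Lambda)$ and is bounded above by $t(\tilde\Lambda)\leq n$. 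Hence $L$ defines a vertex $\Lambda\in\mathcal L$.

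To finish, I verify $x_i\in\Lambda^\sharp$ using that $x_i\in C = N_0^{\tau=1}$ is $\tau$-invariant: the sesquilinearity identity $h(\tau y,\tau z) = h(y,z)^{\sigma^2}$ applied with $y=x_i=\tau^k x_i$ gives $h(x_i,\tau^k a) = h(x_i,a)^{\sigma^{2k}}$ for $a\in A$, so $h(x_i,A)\subseteq W$ propagates to $h(x_i,\tau^k A)\subseteq W$ for every $k\in\mathbb Z$, and therefore $x_i\in\bigcap_k\tau^k A^\sharp = L^\sharp = \Lambda_W^\sharp$, as required. The main obstacle is verifying that the $\tau$-stable lattice $L$ is an honest vertex (establishing the parity and type bound for $\ell_W(L/L^\sharp)$); this is precisely where the length identity and the Vollaard--Wedhorn covering have to be combined.
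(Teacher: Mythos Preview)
Your argument is correct and takes a genuinely different route from the paper's. The paper reduces to the already-known case $m=n$ (Prop.~4.1 of \cite{kr}): it completes $\underline{x}$ to an orthogonal basis $(x_1,\dots,x_m,z_{m+1},\dots,z_n)$, invokes Lemma~\ref{ausbr} and connectedness of $\mathcal N_{\textnormal{red}}$ to show $\mathcal V(\Lambda)\subseteq\mathcal Z(p^l\underline z)(\mathbb F)$ for $l\gg 0$, and then applies the $m=n$ result to $(\underline x,p^l\underline z)$. You instead work intrinsically: you build the minimal $\tau$-stable lattice $L=\sum_k\tau^kA$ containing $A$ (this is precisely Vollaard's construction of the unique vertex with $A\in\mathcal N_\Lambda^\circ$), check directly that it is a vertex, and then use $\tau$-invariance of each $x_i\in C$ to push $x_i\in A^\sharp$ to $x_i\in L^\sharp$. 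Your approach is self-contained and does not rely on the $m=n$ black box; the paper's approach is shorter given that black box, and has the side benefit of feeding into the subsequent dimension computation (Prop.~\ref{dimension}) via the same extension trick.

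One small wrinkle in presentation: you state the length identity $\ell(M/M^\sharp)=2\ell(M/N)+\ell(N/N^\sharp)$ for ``$\tau$-stable $N\subseteq M$'' and then apply it with $N=A$, which is not $\tau$-stable. In fact the identity holds whenever $M^\sharp\subseteq N^\sharp\subseteq N\subseteq M$, with no $\tau$-stability needed on $N$: one has $\ell(N^\sharp/M^\sharp)=\ell(M^{\sharp\sharp}/N^{\sharp\sharp})=\ell(\tau M/\tau N)=\ell(M/N)$ by colength-preservation of $\sharp$ (which follows from $L^\sharp=FL^\vee$ and perfectness of $\langle\,,\,\rangle$). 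Your own derivation, via $L/A\cong L/\tau A$ and $L^{\sharp\sharp}=L$, already shows this; just drop the superfluous hypothesis on $N$. Also make explicit that $pL\subseteq p\tilde\Lambda_W\subseteq\tilde\Lambda_W^\sharp\subseteq L^\sharp$, so the chain $pL\subseteq L^\sharp\subset L$ really holds (the bound $\ell(L/L^\sharp)\le n$ alone does not force $pL\subseteq L^\sharp$).
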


\begin{remark} \label{bts2}
Using the properties of the Bruhat-Tits stratification, the proposition states that the simplicial complex $\mathcal{S}(\underline{x})$ contains the complete information about the global structure of $\mathcal{Z}(\underline{x})_\textnormal{red}$. In particular,
\[ \mathcal{Z}(\underline{x})_\textnormal{red}\ \textnormal{is connected if and only if}\ \mathcal{S}(\underline{x})\ \textnormal{is}. \]
\end{remark}

\begin{proof}[Proof of \ref{bts}]
As $\mathcal{Z}(\underline{x})_\textnormal{red}$ is a closed subscheme of $\mathcal{N}_\textnormal{red}$, hence locally of finite type over the algebraically closed field $\mathbb{F}$, it is enough to show
\[ \mathcal{Z}(\underline{x})(\mathbb{F})=\bigcup_{\Lambda\in\mathcal{S}(\underline{x})}\mathcal{V}(\Lambda). \]
For $m=n$, this is Prop.\ 4.1 of \cite{kr}.\\
For $m<n$, fix $\underline{z}=(z_{m+1},\dots,z_n)\in\mathbb{V}^{n-m}$ such that $(x_1,\dots,x_m,z_{m+1},\dots,z_n)$ forms an orthogonal basis of $\mathbb{V}$. Let  $A\in\mathcal{Z}(\underline{x})(\mathbb{F})$ be arbitrary. Let $\Lambda\in\mathcal{L}$ be of minimal type such that $A\in\mathcal{V}(\Lambda)=\mathcal{N}_\Lambda(\mathbb{F})$. The minimal type condition means $A\in\mathcal{N}_\Lambda^\circ(\mathbb{F})$. We have to show that the whole set $\mathcal{V}(\Lambda)$ belongs to $\mathcal{Z}(\underline{x})(\mathbb{F})$.

Since $\mathcal{N}_\textnormal{red}$ is connected and the irreducible components are the Bruhat-Tits strata coming from vertices of maximal type, Lemma \ref{ausbr} implies that for any special homomorphism $y$ and any vertex $\tilde\Lambda$, we find an integer $l_0$ s.t.
\[ \mathcal{V}(\tilde\Lambda)\subseteq\mathcal{Z}(p^ly)(\mathbb{F})\ \textnormal{for}\ l\geq l_0. \]
In particular, $\mathcal{V}(\Lambda)\subseteq\mathcal{Z}(p^l\underline{z})(\mathbb{F})$ for $l\gg 0$. Thus $A\in\mathcal{Z}(\underline{x},p^l\underline{z})$. Prop.\ 4.1 of \cite{kr} now implies $\mathcal{V}(\Lambda)\subseteq\mathcal{Z}(\underline{x},p^l\underline{z})\subseteq\mathcal{Z}(\underline{x})$.
\end{proof}

\begin{prop} \label{dimension}
For $\underline{x}\in\mathbb{V}^m$, let
\[ m_1:=\textnormal{Card}\{1\leq i\leq m\ |\ r(x_i)=0\}. \]
Then $\mathcal{Z}(\underline{x})_\textnormal{red}$ is of pure dimension
\[ \dim\,\mathcal{Z}(\underline{x})_\textnormal{red}=\lfloor(n-m_1-1)/2\rfloor, \]
in other words, of pure codimension $\lfloor m_1/2\rfloor$ in $\mathcal{N}_\textnormal{red}$ if $n$ is even, of pure codimension $\lfloor (m_1+1)/2\rfloor$ in $\mathcal{N}_\textnormal{red}$ if $n$ is odd.
\end{prop}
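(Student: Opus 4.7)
The strategy I would follow is to translate the statement into combinatorics on the simplicial complex $\mathcal{S}(\underline{x})$. By Proposition \ref{bts}, $\mathcal{Z}(\underline{x})_\textnormal{red}$ is the union of the closed Bruhat-Tits strata $\mathcal{N}_\Lambda$ for $\Lambda\in\mathcal{S}(\underline{x})$, and by \cite{vw}, Thm.\ 4.1(1), its irreducible components are precisely the $\mathcal{N}_\Lambda$ for which $\Lambda$ is maximal in $\mathcal{S}(\underline{x})$ under inclusion. Since $\dim\mathcal{N}_\Lambda=(t(\Lambda)-1)/2$, the proposition reduces to showing that every maximal $\Lambda\in\mathcal{S}(\underline{x})$ has type exactly $t'_{\max}$, the largest odd integer $\leq n-m_1$.

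For the upper bound $t(\Lambda)\leq t'_{\max}$, the plan is to use an orthogonal decomposition driven by the valuation-$0$ vectors. Set $V:=\bigoplus_{i=1}^{m_1}\mathbb{Q}_{p^2}x_i$, $M:=\bigoplus_{i=1}^{m_1}\mathbb{Z}_{p^2}x_i$, and $C_\perp:=V^\perp$. Under the assumptions of \ref{fleshwound}, each $h(x_i,x_i)$ is a unit for $i\leq m_1$ and the $x_i$ are pairwise $h$-orthogonal, so $M$ is self-dual in $V$. For any $\Lambda\in\mathcal{S}(\underline{x})$, the condition $x_i\in\Lambda^\sharp$ yields $h(x_i,y)\in\mathbb{Z}_{p^2}$ for every $y\in\Lambda$, so that the coefficient $h(x_i,y)/h(x_i,x_i)$ of $x_i$ in the $V$-part of $y$ lies in $\mathbb{Z}_{p^2}$. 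This gives the orthogonal decompositions
\[
\Lambda=M\oplus\Lambda_\perp,\qquad \Lambda^\sharp=M\oplus\Lambda_\perp^\sharp,
\]
with $\Lambda_\perp:=\Lambda\cap C_\perp$; a direct verification then shows that $\Lambda_\perp$ is a vertex in $C_\perp$ of the same type as $\Lambda$. Since $C_\perp$ has dimension $n-m_1$ and its hermitian form has determinant of odd valuation (as $\det h|_M\in\mathbb{Z}_{p^2}^\times$), Proposition \ref{build} applied to $C_\perp$ yields $t(\Lambda)=t(\Lambda_\perp)\leq t'_{\max}$.

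For the matching lower bound and purity, I would reduce to Kudla-Rapoport's result in the case $m=n$ via a stabilization trick. Complete $(x_1,\ldots,x_m)$ to an orthogonal $\mathbb{Q}_{p^2}$-basis $(x_1,\ldots,x_m,z_{m+1},\ldots,z_n)$ of $C$. For any maximal $\Lambda\in\mathcal{S}(\underline{x})$, pick $l\gg 0$ so that $p^lz_j\in\Lambda^\sharp$ for all $j>m$; this is possible because $C=\bigcup_k p^{-k}\Lambda^\sharp$. Then $\Lambda$ belongs to $\mathcal{S}(\underline{y})$, where $\underline{y}:=(x_1,\ldots,x_m,p^lz_{m+1},\ldots,p^lz_n)$, and $\underline{y}$ satisfies the hypotheses of Thm.\ 1.1 of \cite{kr}: length $n$, fundamental matrix nonsingular with integral entries, and the same count $m_1$ of valuation-$0$ entries (since $r(p^lz_j)=r(z_j)+2l\geq 1$ for $l$ large). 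Kudla-Rapoport's theorem then asserts that $\mathcal{Z}(\underline{y})_\textnormal{red}$ is pure of dimension $\lfloor(n-m_1-1)/2\rfloor$, so $\Lambda$ is contained in some $\Lambda'\in\mathcal{S}(\underline{y})$ with $t(\Lambda')=t'_{\max}$. Since $\mathcal{S}(\underline{y})\subseteq\mathcal{S}(\underline{x})$, maximality of $\Lambda$ in $\mathcal{S}(\underline{x})$ forces $\Lambda=\Lambda'$, giving $t(\Lambda)=t'_{\max}$.

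I expect the main technical step to be the orthogonal decomposition in the second paragraph: it is essential to show that $\Lambda$ itself, not merely $\Lambda^\sharp$, splits as $M\oplus\Lambda_\perp$, which requires both the integrality $h(x_i,y)\in\mathbb{Z}_{p^2}$ coming from $x_i\in\Lambda^\sharp$ and the invertibility of $h(x_i,x_i)$ for $i\leq m_1$. Once this is established, verifying the vertex property and the type of $\Lambda_\perp$ and carrying out the stabilization reduction are routine.
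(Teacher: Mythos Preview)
Your proof is correct, and your stabilization argument (completing $\underline{x}$ to an $n$-tuple $\underline{y}=(x_1,\dots,x_m,p^lz_{m+1},\dots,p^lz_n)$ and invoking \cite{kr}) is exactly the paper's approach; the paper phrases it as ``$\mathcal{N}_\Lambda$ is an irreducible component of $\mathcal{Z}(\underline{x},p^l\underline{z})_\textnormal{red}$, now apply Cor.\ 4.3 of \cite{kr}'', but the content is the same. Note, however, that your second paragraph (the orthogonal decomposition $\Lambda=M\oplus\Lambda_\perp$ giving the upper bound $t(\Lambda)\leq t'_{\max}$) is redundant: your stabilization argument already yields $t(\Lambda)=t'_{\max}$ on the nose, since $\Lambda'\in\mathcal{S}(\underline{y})\subseteq\mathcal{S}(\underline{x})$ and maximality of $\Lambda$ force $\Lambda=\Lambda'$, not merely $\Lambda\subseteq\Lambda'$. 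The paper accordingly omits any separate upper-bound step. Your decomposition argument is nonetheless correct and foreshadows the later construction of the maps $\Phi_s$ in Section 9.
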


\begin{proof}
For $m=n$, this is Cor.\ 4.3 of \cite{kr}.\\
For $m<n$, choose $\underline{z}$ with the same properties as in the proof of Prop.\ \ref{bts}. By Prop.\ \ref{bts}, all irreducible components of $\mathcal{Z}(\underline{x})_\textnormal{red}$ are Bruhat-Tits strata. Let $\Lambda,\tilde\Lambda\in\mathcal{S}(\underline{x})$ be such that $\mathcal{N}_\Lambda,\mathcal{N}_{\tilde\Lambda}$ are irreducible components of $\mathcal{Z}(\underline{x})_\textnormal{red}$. We have to show that $t(\Lambda)=t(\tilde\Lambda)$.\\
But by the same reasoning as in the mentioned proof, both $\mathcal{V}(\Lambda)$ and $\mathcal{V}(\tilde\Lambda)$ are in $\mathcal{Z}(p^l\underline{z})(\mathbb{F})$ for $l\gg 0$. Thus, they are sets of $\mathbb{F}$-valued points of irreducible components of $\mathcal{Z}(\underline{x},p^l\underline{z})(\mathbb{F})$. Now apply Cor.\ 4.3 of \cite{kr}.
\end{proof}

\section{The Kudla-Rapoport stratification and related concepts}

For the next four sections, we assume $m=1$. In this section, we introduce the lattice-theoretical stratification of $\mathcal{Z}(x)(\mathbb{F})$, following Kudla and Rapoport's unpublished notes \cite{kr2}. For convenience of the reader, we give complete proofs. We also generalize the ideas of \cite{kr2} to give an analogous ``stratification'' of the set $\mathcal{L}$ of vertices.

\begin{fliess}
Let $x\in C\subset N_0$. Write $x_0:=x$, $x_1:=F^{-1}x\in N_1$. Set $r=v(h(x,x))$. We introduce the following notation:
\[ N_\parallel:=W_\mathbb{Q}x_0+W_\mathbb{Q}x_1\subseteq N,\quad N_\perp:=N_\parallel^\perp, \]
where $^\perp$ denotes the orthogonal complement w.r.t.\ $\langle .,.\rangle$. We then have orthogonal projections
\[ \textnormal{pr}_\parallel: N\twoheadrightarrow N_\parallel,\quad \textnormal{pr}_\perp: N\twoheadrightarrow N_\perp \]
Both $N_\parallel$ and $N_\perp$ are $F$-, $V$-, $\mathbb{Z}_{p^2}$-stable and $\langle .,.\rangle$ induces a non-degenerate form on both of them.

For $i=0,1$, write
\[ N_{\parallel,i}:=N_i\cap N_\parallel,\quad N_{\perp,i}:=N_i\cap N_\perp. \]
As $x\in C$, the $W_\mathbb{Q}$-vector spaces $N_{\parallel,0}$ and $N_{\perp,0}$ have a canonical $\mathbb{Q}_{p^2}$-rational structure induced by $C$. In other words, writing 
\[ C_\parallel:=N_{\parallel,0}\cap C,\quad C_\perp:=N_{\perp,0}\cap C, \]
we have 
\[ N_{\parallel,0}=(C_\parallel)\otimes_{\mathbb{Q}_{p^2}} W_\mathbb{Q},\quad N_{\perp,0}=(C_\perp)\otimes_{\mathbb{Q}_{p^2}} W_\mathbb{Q}. \]
In particular, $C_\perp$ is a $(n-1)$-dimensional $\mathbb{Q}_{p^2}$-vector space. The hermitian form $h$ induces a non-degenerate hermitian form, which we also denote by $h$, on $C_\perp$.

For any $W$-lattice $M$ in $N$, write
\[ M_\parallel:=M\cap N_\parallel,\quad M_\perp:=M\cap N_\perp. \]
If $M$ is $\mathbb{Z}_{p^2}$-stable, we also write for $i=0,1$
\[ M_{\parallel,i}=M\cap N_{\parallel,i},\quad M_{\perp,i}=M\cap N_{\perp,i}. \]
Furthermore, for any $W$-lattice $L$ in $N_\parallel$ (resp.\ $N_\perp$), we will denote the dual lattice of $L$ w.r.t.\ the form induced by $\langle .,.\rangle$ on $N_\parallel$ (resp.\ $N_\perp$) by $L^\vee$.

Let $M$ be a $W$-lattice in $N$. One has obvious inclusions
\[ M_\parallel\oplus M_\perp\subseteq M\subseteq\textnormal{pr}_\parallel(M)\oplus\textnormal{pr}_\perp(M). \]
\end{fliess}

\begin{lem}[Kudla-Rapoport, \cite{kr2}, Lemma 5.1] \label{fortytwo}
Let $M$ be self-dual. Then
\[ M_\parallel^\vee=\textnormal{pr}_\parallel(M),\ M_\perp^\vee=\textnormal{pr}_\perp(M). \]
The quotient module
\[ M/(M_\parallel\oplus M_\perp)\subseteq (M_\parallel^\vee\oplus M_\perp^\vee)/(M_\parallel\oplus M_\perp)=(M_\parallel^\vee/M_\parallel)\oplus(M_\perp^\vee/M_\perp) \]
is the graph of a $W$-linear isomorphism
\[ \gamma:M_\parallel^\vee/M_\parallel\rightarrow M_\perp^\vee/M_\perp, \]
which is an anti-isometry w.r.t.\ the $W_\mathbb{Q}/W$-valued symplectic forms induced by $\langle .,.\rangle$ on $M_\parallel^\vee/M_\parallel$ and $M_\perp^\vee/M_\perp$.

If $M$ is $F$-, $V$-, and $\mathbb{Z}_{p^2}$-stable, we have induced $\sigma$-linear endomorphisms $\overline{F}$ and $\sigma^{-1}$-linear endomorphisms $\overline{V}$, as well as induced $\mathbb{Z}_{p^2}$-actions, on $M_\parallel^\vee/M_\parallel$ and $M_\perp^\vee/M_\perp$. In this case, $\gamma$ is equi\-variant for them.
\end{lem}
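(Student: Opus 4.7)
The plan is to first establish part (1) via a double-dualization argument that exploits orthogonality of the decomposition $N=N_\parallel\oplus N_\perp$, then deduce the graph description in (2) essentially formally from (1), and finally check the anti-isometry and equivariance by direct calculations.

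First I would prove (1) by computing $M^\vee\cap N_\parallel$ in two different ways. Self-duality of $M$ gives $M^\vee\cap N_\parallel=M\cap N_\parallel=M_\parallel$. On the other hand, for $y\in N_\parallel$ and $x=x_\parallel+x_\perp\in N$, orthogonality of the decomposition yields $\langle y,x\rangle=\langle y,x_\parallel\rangle$, so the condition $y\in M^\vee$ becomes $\langle y,\textnormal{pr}_\parallel(M)\rangle\subseteq W$, i.e.\ $y\in \textnormal{pr}_\parallel(M)^\vee$ inside $N_\parallel$. Combining the two computations gives $M_\parallel=\textnormal{pr}_\parallel(M)^\vee$, and dualizing once more inside $N_\parallel$ yields $M_\parallel^\vee=\textnormal{pr}_\parallel(M)$. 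The statement for $\perp$ is entirely symmetric.

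For (2), part (1) immediately embeds $M/(M_\parallel\oplus M_\perp)$ into $(M_\parallel^\vee/M_\parallel)\oplus(M_\perp^\vee/M_\perp)$. To see that this submodule is the graph of a $W$-linear isomorphism $\gamma$, I would check that the projection to each factor is surjective (by (1) every class in, say, $M_\parallel^\vee/M_\parallel$ is represented by $\textnormal{pr}_\parallel(x)$ for some $x\in M$) and injective (if $[x]\in M/(M_\parallel\oplus M_\perp)$ is trivial in the $\parallel$-factor, subtract an element of $M_\parallel\subseteq M$ to arrange $\textnormal{pr}_\parallel(x)=0$, forcing $x\in M\cap N_\perp=M_\perp$). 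For the anti-isometry, I would take $x,x'\in M$, decompose them into $\parallel$- and $\perp$-components, and use orthogonality to write
$$\langle x,x'\rangle=\langle\textnormal{pr}_\parallel(x),\textnormal{pr}_\parallel(x')\rangle+\langle\textnormal{pr}_\perp(x),\textnormal{pr}_\perp(x')\rangle\in W;$$
the two summands are then negatives of each other modulo $W$, which is precisely the anti-isometry property of $\gamma$ once the graph description is taken into account.

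Part (3) should reduce to a formality: $N_\parallel$ and $N_\perp$ are $F$-, $V$-, and $\iota$-stable, so these operators commute with the projections, descend to $M_\parallel$ and $M_\perp$ and (via the intertwining relation $\langle Fx,y\rangle=\langle x,Vy\rangle^\sigma$ and the analogous fact for $\iota$) to the duals $M_\parallel^\vee,M_\perp^\vee$, hence to the quotients $M_\bullet^\vee/M_\bullet$. Stability of $M$ itself then forces the graph submodule $M/(M_\parallel\oplus M_\perp)$ to be preserved, yielding equivariance of $\gamma$. I expect the only real content of the lemma to lie in the dual identification of part (1); once $M^\vee\cap N_\parallel=\textnormal{pr}_\parallel(M)^\vee$ is in place, the remaining assertions are bookkeeping about dual lattices and orthogonal projections.
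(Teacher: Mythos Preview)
Your proposal is correct and follows essentially the same route as the paper. The only cosmetic difference is in part (1): the paper proves the two inclusions $\textnormal{pr}_\parallel(M)\subseteq M_\parallel^\vee$ and $\textnormal{pr}_\parallel(M)^\vee\subseteq M_\parallel$ separately, whereas you compute $M^\vee\cap N_\parallel=\textnormal{pr}_\parallel(M)^\vee$ in one stroke and then dualize; the remaining parts (graph description via the two projections being bijective, anti-isometry from $\langle x,x'\rangle\in W$, and equivariance from stability of $M$) match the paper's argument line for line.
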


\begin{proof}
Let $z\in\textnormal{pr}_\parallel(M)$. By definition, there exists $y\in N_\perp$ with $z+y\in M$. For any $z'\in M_\parallel$, by self-duality of $M$, we know that $\langle z,z'\rangle=\langle z+y,z'\rangle$ is in $W$. Thus, $z\in M_\parallel^\vee$.

By duality, the other inclusion $M_\parallel^\vee\subseteq\textnormal{pr}_\parallel(M)$ is equivalent to the following claim: $\textnormal{pr}_\parallel(M)^\vee\subseteq M_\parallel$.

Let $z\in\textnormal{pr}_\parallel(M)^\vee$. Let $z'+y'\in M$, with $z'\in N_\parallel$ and $y'\in N_\perp$. Then $\langle z,z'+y'\rangle=\langle z,z'\rangle$ is in $W$, since $z'\in\textnormal{pr}_\parallel(M)$. This shows $z\in M^\vee$. By self-duality, we have $z\in M$, proving the claim and thus the first equality
\[ M_\parallel^\vee=\textnormal{pr}_\parallel(M). \]
Next, we show that the projection
\[ \overline{\textnormal{pr}_\parallel}:M/(M_\parallel\oplus M_\perp)\rightarrow\textnormal{pr}_\parallel(M)/M_\parallel \]
is a $W$-module isomorphism. We have the following commutative diagram:
\[ \xymatrix @C=0.5cm { M \ar@{->>}[r]^{\textnormal{pr}_\parallel} \ar[d] & {\textnormal{pr}_\parallel(M)} \ar@{->>}[d] \\
{M/(M_\parallel\oplus M_\perp)} \ar[r]^-{\overline{\textnormal{pr}_\parallel}} & {\textnormal{pr}_\parallel(M)/M_\parallel} } \]
From this, the surjectivity of $\overline{\textnormal{pr}_\parallel}$ is immediate. To show the injectivity, take any $\overline{z}\in M/(M_\parallel\oplus M_\perp)$ with $\overline{pr}_\parallel(\overline{z})=0$. Let $z$ be a preimage of $\overline{z}$ in $M$. We have to show $z\in M_\parallel\oplus M_\perp$. But by choice of $z$, we know that $\textnormal{pr}_\parallel(z)$ is in $M_\parallel$, and thus $z-\textnormal{pr}_\parallel z\in M\cap N_\perp=M_\perp$.

Of course, the same proofs (with the $\parallel$s and the $\perp$s interchanged) work for the other assertions so far.

All this means that $M/(M_\parallel\oplus M_\perp)$ is the graph of the $W$-linear isomorphism
\[ \gamma:=\overline{\textnormal{pr}_\perp}\circ(\overline{\textnormal{pr}_\parallel})^{-1}:M_\parallel^\vee/M_\parallel\rightarrow M_\perp^\vee/M_\perp. \]
Explicitly, if $\overline{z}\in M_\parallel^\vee/M_\parallel$ is the coset of $z\in M_\parallel^\vee$, then $\gamma(\overline{z})\in M_\perp^\vee/M_\perp$ is the coset of elements $y\in M_\perp^\vee$ such that $z+y\in M$.

The symplectic form $\langle .,.\rangle$ is $W$-valued on $M_\parallel^\vee\times M_\parallel$ and $M_\perp^\vee\times M_\perp$ and thus induces $W_\mathbb{Q}/W$-valued forms $\overline{\langle .,.\rangle}$ on $M_\parallel^\vee/M_\parallel$ and $M_\perp^\vee/M_\perp$. Denote the cosets of $z$, $z'\in M_\parallel^\vee$ by $\overline{z}$ resp.\ $\overline{z}'\in M_\parallel^\vee/M_\parallel$. Let $y$, $y'\in M_\perp^\vee$ be representatives of $\gamma(\overline{z})$ resp.\ $\gamma(\overline{z}')$. Then we have 
\[ \langle z,z'\rangle+\langle y,y'\rangle=\langle z+y,z'+y'\rangle\in W, \]
since $z+y$ and $z'+y'$ are in $M=M^\vee$, and thus
\[ \overline{\langle\overline{z},\overline{z}'\rangle}+\overline{\langle\gamma(\overline{z}),\gamma(\overline{z}')\rangle}=0. \]
In other words, $\gamma$ is an anti-isometry for $\overline{\langle .,.\rangle}$.

Now assume $M$ to be $F$-stable. Let $\overline{z}\in M_\parallel^\vee/M_\parallel$. We want to show the $\overline{F}$-equivariance of $\gamma$, i.e.
\[ \gamma(\overline{F}\overline{z})=\overline{F}\gamma(\overline{z}). \]
Let $z\in M_\parallel^\vee$ be a representative of $\overline{z}$ and $y\in M_\perp^\vee$ be a representative of $\gamma(\overline{z})$, i.e.\ $z+y\in M$. Then $Fz$ is a representative of $\overline{F}\overline{z}$ and $Fy$ be a representative of $\overline{F}\gamma(\overline{z})$ by definition of $\overline{F}$. But, by $F$-invariance of $M$, we have
\[ Fz+Fy=F(z+y)\in M.\]
This means that $Fy$ is a representative of $\gamma(\overline{F}\overline{z})$, i.e.\ $\overline{F}\gamma(\overline{z})=\gamma(\overline{F}\overline{z})$.

The proofs of $\overline{V}$- and $\mathbb{Z}_{p^2}$-equivariance work analogously.
\end{proof}

\begin{remark} \label{thirtysix}
One trivial, but particularly important special case is the equivalence
\[ M_\parallel=M_\parallel^\vee\Leftrightarrow M_\perp=M_\perp^\vee\Leftrightarrow M=M_\parallel\oplus M_\perp. \]
\end{remark}

\begin{fliess}[Kudla-Rapoport, \cite{kr2}, \S5]
For any pair of integers $(a,b)$, write
\[ L_{(a,b)}:=p^{-a}Wx_0+p^{-b}Wx_1\subset N_\parallel. \]
Any $M\in\mathcal{N}(\mathbb{F})$ decomposes into eigenspaces for the $\mathbb{Z}_{p^2}$-action, thus so does $M_\parallel$, hence $M_\parallel$ is of the form $M_\parallel=L_{(a,b)}$ for some pair of integers $(a,b)$. Since
\[ FL_{(a,b)}=p^{-b}Wx_0+p^{-a+1}Wx_1=L_{(b,a-1)}, \]
the signature condition for $M$ implies $a-1\leq b\leq a$. An easy calculation yields 
\[ L_{(a,b)}^\vee=L_{(r-b,r-a)}, \]
where $r=v(h(x,x))$ as defined at the beginning of this section. Thus the inclusion $M_\parallel\subseteq M_\parallel^\vee$ implies $a+b\leq r$.
\end{fliess}

\begin{fliess}
A given $M\in\mathcal{N}(\mathbb{F})$ is an $\mathbb{F}$-valued point of the special cycle $\mathcal{Z}(x)$ if and only if $x_0=x\in M_0^\sharp=FM_1$, i.e.\ iff $x_1\in M_1$. Writing $M_\parallel=L_{(a,b)}$, this holds if and only if $a,b\geq 0$. We write
\[ \mathcal{Z}(x)^{(a,b)}:=\{M\in\mathcal{Z}(x)(\mathbb{F})\ |\ M_\parallel=L_{(a,b)}\}. \]
This yields a stratification (which will be referred to as the \itshape Kudla-Rapoport stratification\normalfont)\footnote{One may ask whether the Kudla-Rapoport strata are ``strata'' in the geometric sense, i.e.\ arise as sets of $\mathbb{F}$-valued points of locally closed subsets of $\mathcal{Z}(x)$. This is probably true, but we will not use this.}
\[ \mathcal{Z}(x)(\mathbb{F})=\mathcal{Z}(x)^{(0,0)}\cup\mathcal{Z}(x)^{(1,0)}\cup\mathcal{Z}(x)^{(1,1)}\cup\mathcal{Z}(x)^{(2,1)}\cup\dots\cup\mathcal{Z}(x)^{max}, \]
where $\mathcal{Z}(x)^{max}$ denotes the last occurring stratum, i.e.\ $\mathcal{Z}(x)^{\left(\frac{r}{2},\frac{r}{2}\right)}$ if $r$ is even, $\mathcal{Z}(x)^{\left(\frac{r+1}{2},\frac{r-1}{2}\right)}$ if $r$ is odd. (Of course, the union is disjoint.)
\end{fliess}

\begin{fliess} \label{lambda}
We may also consider vertices $\Lambda$ as self-dual lattices in $N$. To do this, one simply inverts the procedure of Prop.\ \ref{nnull}. Given a vertex $\Lambda\in\mathcal{L}$, i.e.\ a $\mathbb{Z}_{p^2}$-lattice in $C$, we associate the $\tau$-invariant $W$-lattice $\Lambda_W:=\Lambda\otimes_{\mathbb{Z}_{p^2}}W$ in $N_0$. Then we set
\[ \Lambda_0:=\Lambda_W,\quad \Lambda_1=F^{-1}\Lambda_W^\sharp \]
The $(\mathbb{Z}/2\mathbb{Z})$-graded $W$-module $\Lambda_\bullet:=\Lambda_0\oplus\Lambda_1$ is a $W$-lattice in $N$ which is obviously $F$-, $V$- and $\mathbb{Z}_{p^2}$-invariant, self-dual (w.r.t.\ $\langle .,.\rangle$) by construction, and satisfies the signature condition
\[ p\Lambda_0\subseteq F\Lambda_1=V\Lambda_1\overset{\text{\tiny\itshape t}}{\subset}\Lambda_0,\quad p\Lambda_1\overset{\text{\tiny\itshape t}}{\subset} F\Lambda_0=V\Lambda_0\subseteq\Lambda_1, \]
where $t=t(\Lambda)$. We write 
\[ \Lambda_\parallel:=(\Lambda_\bullet)_\parallel,\quad \Lambda_\perp:=(\Lambda_\bullet)_\perp \]
and observe that $\Lambda_\bullet$ satisfies the assumptions of Lemma \ref{fortytwo}. Writing $\Lambda_\parallel=L_{(a,b)}$, we carry through the computations of the last two paragraphs. They yield the analogous results
\[ a-1\leq b\leq a;\quad a+b\leq r;\quad \Lambda\in \mathcal{S}(x)\Leftrightarrow x_1\in\Lambda_1\Leftrightarrow b\geq 0. \]
Thus we have a ``stratification'' of the simplicial complex $\mathcal{S}(x)$ analogous to the Kudla-Rapoport stratification on $\mathcal{Z}(x)(\mathbb{F})$, i.e.\ we can write $\mathcal{S}(x)$ as a disjoint union
\[ \mathcal{S}(x)=\mathcal{S}(x)^{(0,0)}\cup\mathcal{S}(x)^{(1,0)}\cup\dots\cup\mathcal{S}(x)^{\max}, \]
where $\mathcal{S}(x)^{\max}=\mathcal{S}(x)^{\left(\frac{r}{2},\frac{r}{2}\right)}$ if $r$ is even and $\mathcal{S}(x)^{\max}=\mathcal{S}(x)^{\left(\frac{r+1}{2},\frac{r-1}{2}\right)}$ if $r$ is odd.
\end{fliess}

\begin{fliess}
Finally, we translate our stratification results into the $(N_0,h)$ language. Recall from the proof of Prop.\ \ref{nnull} that we have a bijection
\begin{align*}
\{ F\textnormal{-, }V\textnormal{-, }\mathbb{Z}_{p^2}\textnormal{-invariant self-dual } W\textnormal{-lattices in } N\} &\leftrightarrow \{ W\textnormal{-lattices in } N_0 \} \\
M=M_0\oplus M_1 &\mapsto M_0
\end{align*}
with inverse
\[ A\mapsto A\oplus F^{-1}A^\sharp, \]
and that under this identification, the chain condition in $N$:
\[ pM_0\overset{\text{\tiny{\itshape n-t}}}{\subset} FM_1\overset{\text{\tiny\itshape t}}{\subset} M_0,\quad pM_1\overset{\text{\tiny\itshape t}}{\subset} FM_0\overset{\text{\tiny{\itshape n-t}}}{\subset} M_1 \]
is equivalent to the chain condition in $N_0$:
\[ pM_0\overset{\text{\tiny\itshape{n-t}}}{\subset} M_0^\sharp\overset{\text{\tiny\itshape t}}{\subset} M_0. \]
\end{fliess}

\begin{lem} \label{leo}
The assertion $M_\parallel=L_{(a,b)}$, i.e.
\[ M_\parallel=p^{-a}Wx_0\oplus p^{-b}Wx_1 \]
is equivalent to
\[ M_{\parallel,0}=p^{-a}Wx,\quad (M_0^\sharp)\cap N_{\parallel,0}=p^{-b}Wx, \]
where $x=x_0$.
\end{lem}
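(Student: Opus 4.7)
The plan is to decompose everything according to the $\mathbb{Z}_{p^2}$-eigenspace decomposition and then translate the second component via the identity $M_0^\sharp = FM_1$ that comes from self-duality of $M$.

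First, observe that $N_\parallel$ is $\mathbb{Z}_{p^2}$-stable, so the decomposition $N = N_0 \oplus N_1$ restricts to $N_\parallel = N_{\parallel,0} \oplus N_{\parallel,1}$, and any $\mathbb{Z}_{p^2}$-stable lattice in $N_\parallel$ splits accordingly. Applying this to $M_\parallel$ and $L_{(a,b)} = p^{-a}Wx_0 \oplus p^{-b}Wx_1$, the equality $M_\parallel = L_{(a,b)}$ is equivalent to the two component equalities
\[ M_{\parallel,0} = p^{-a}Wx_0 \quad\text{and}\quad M_{\parallel,1} = p^{-b}Wx_1. \]
The first of these is literally the first condition in the statement (recall $x = x_0$), so it only remains to show that the second is equivalent to $M_0^\sharp \cap N_{\parallel,0} = p^{-b}Wx$.

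For this, I would use the relation $M_0^\sharp = FM_1$, which follows from self-duality of $M$ together with the identity $L^\sharp = FL^\vee$ for lattices in $N_0$ (recalled in the preceding section). Since $N_\parallel$ is $F$-stable and $F$ restricts to a bijection $N_{\parallel,1} \to N_{\parallel,0}$ sending $x_1 = F^{-1}x$ to $x$, one computes
\[ M_0^\sharp \cap N_{\parallel,0} = FM_1 \cap N_{\parallel,0} = F\bigl(M_1 \cap F^{-1}(N_{\parallel,0})\bigr) = F(M_1 \cap N_{\parallel,1}) = F(M_{\parallel,1}). \]
Because $F$ is $\sigma$-linear and $W$ is $\sigma$-stable, $F(p^{-b}Wx_1) = p^{-b}Wx$; conversely, $F$ is injective on $N_{\parallel,1}$, so $F(M_{\parallel,1}) = p^{-b}Wx$ forces $M_{\parallel,1} = p^{-b}Wx_1$. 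This gives the desired equivalence.

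There is no real obstacle here; the only non-routine ingredient is recognizing that the $F$-equivariance of the pairings translates the $\sharp$-dual condition in $N_0$ into a condition on $M_1$, after which the computation is purely formal. The main thing to be careful about is keeping track of the fact that the $\sharp$ dual and the $\vee$ dual live in different spaces and are related by the twist by $F$.
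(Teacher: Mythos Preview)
Your proof is correct and follows essentially the same approach as the paper's: both reduce to the identity $M_0^\sharp = FM_1$ and the computation $F(p^{-b}Wx_1) = p^{-b}Wx_0$. You simply spell out in more detail the intermediate step $FM_1 \cap N_{\parallel,0} = F(M_{\parallel,1})$, which the paper leaves implicit.
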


\begin{proof}
The first identity is trivial. For the second, put together $(M_0^\sharp)\cap N_{\parallel,0}=(FM_1)\cap N_{\parallel,0}$ and $F(p^{-b}Wx_1)=p^{-b}WFx_1=p^{-b}Wx_0$.
\end{proof}

\section{Base case I: The case $r=0$}

We have now introduced all the tools needed for our proof of Thms.\ \ref{prime} and \ref{holygrail}. The aim of this section is to prove Thm.\ \ref{prime}(1), which corresponds to $r=r(x)=v(h(x,x))=0$, which we assume throughout this section. We will also prove Thm.\ \ref{holygrail} in this case. The results are from the unpublished notes \cite{kr2}; we give an elaboration of those results and the proofs.

\begin{fliess}
We saw in Proposition \ref{nnull} that the set $\mathcal{N}(\mathbb{F})$ may be reconstructed from the knowledge of $(N_0,h)$ alone. Since $r$ is even, $h$ is odd on $C_\perp$ (i.e.\ has odd order of determinant), which means that $(N_{\perp,0},h)$ is isomorphic to the $\varphi_0$-eigenspace of the isocrystal obtained from the data used in defining the moduli scheme $\mathcal{N}(1,n-2)$. We fix an isomorphism, which then induces a bijection
\[ \mathcal{N}(1,n-2)(\mathbb{F})\leftrightarrow\{A_\perp\subset N_{\perp,0}\ \textnormal{a}\ W\textnormal{-lattice}\ |\ pA_\perp\overset{\text{\tiny 1}}{\subset}A_\perp^\sharp\overset{\text{\tiny{\itshape n-\normalfont 2}}}{\subset}A_\perp\}. \]
The right hand side will be denoted by $\mathcal{N}_\perp$. Of course, $\mathcal{N}_\perp$ can be identified with the set of self-dual $F$-, $V$- and $\mathbb{Z}_{p^2}$-invariant $W$-lattices $M_\perp$ in $N_\perp$ satisfying the signature condition
\[ pM_{\perp,0}\overset{\text{\tiny{n-2}}}{\subset} FM_{\perp,1}\overset{\text{\tiny 1}}{\subset}M_{\perp,0},\quad pM_{\perp,1}\overset{\text{\tiny 1}}{\subset} FM_{\perp,0}\overset{\text{\tiny{n-2}}}{\subset}M_{\perp} \]
by the usual construction.
\end{fliess}

\begin{prop}[Kudla-Rapoport, \cite{kr2}, Prop.\ 5.2]\label{zero}
One has a map
\[ \mathcal{N}_\perp\rightarrow\mathcal{Z}(x)(\mathbb{F}),\quad M_\perp\mapsto L_{(0,0)}\oplus M_\perp\subset N, \]
which is a bijection with inverse
\[ \mathcal{Z}(x)(\mathbb{F})\rightarrow\mathcal{N}_\perp,\quad M\mapsto M_\perp. \]
\end{prop}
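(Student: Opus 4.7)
The plan is to show that when $r=0$, any $M \in \mathcal{Z}(x)(\mathbb{F})$ splits orthogonally as $M = M_\parallel \oplus M_\perp$ with $M_\parallel$ forced to equal $L_{(0,0)}$, so the construction $M_\perp \mapsto L_{(0,0)} \oplus M_\perp$ is inverse to $M \mapsto M_\perp$.

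\emph{Step 1: Only one stratum appears.} Write $M_\parallel = L_{(a,b)}$. By \ref{lambda}, applied to the lattice description in \ref{muff} of points of $\mathcal{Z}(x)(\mathbb{F})$ rather than of $\mathcal{S}(x)$ (i.e.\ the Kudla--Rapoport stratification just above \ref{lambda}), the constraints $0 \le b \le a$ and $a+b \le r = 0$ force $(a,b)=(0,0)$. Hence $M_\parallel = L_{(0,0)} = Wx_0 \oplus Wx_1$.

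\emph{Step 2: Orthogonal splitting.} Using $L_{(a,b)}^\vee = L_{(r-b,r-a)}$ with $r=0$, the lattice $L_{(0,0)}$ is self-dual with respect to $\langle .,. \rangle$ restricted to $N_\parallel$. By Remark \ref{thirtysix}, self-duality of $M_\parallel$ is equivalent to the orthogonal decomposition $M = M_\parallel \oplus M_\perp = L_{(0,0)} \oplus M_\perp$. This shows the two maps are mutually inverse as soon as both land in the correct targets.

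\emph{Step 3: The map $M \mapsto M_\perp$ lands in $\mathcal{N}_\perp$.} From $M = L_{(0,0)} \oplus M_\perp$ and self-duality of $M$ and $L_{(0,0)}$, the summand $M_\perp$ is self-dual in $N_\perp$. It is automatically $F$-, $V$-, and $\mathbb{Z}_{p^2}$-stable because $N_\perp$ and $L_{(0,0)}$ are, and intersections of stable submodules are stable. For the chain condition, decomposing along $N_{\parallel,i} \oplus N_{\perp,i}$ gives $M_0 = Wx_0 \oplus M_{\perp,0}$, $M_1 = Wx_1 \oplus M_{\perp,1}$, and hence
\[ FM_1 = Wx_0 \oplus FM_{\perp,1}, \qquad pM_0 = pWx_0 \oplus pM_{\perp,0}. \]
Comparing with the $(1,n-1)$-chain condition $pM_0 \overset{n-1}{\subset} FM_1 \overset{1}{\subset} M_0$, the summand in $N_\parallel$ contributes lengths $(1,0)$, so the $N_\perp$-summand contributes lengths $(n-2,1)$, which is precisely the $(1,n-2)$-chain condition defining $\mathcal{N}_\perp$. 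Translating back via Prop.\ \ref{nnull} applied to $N_{\perp,0}$ shows $M_\perp \in \mathcal{N}_\perp$.

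\emph{Step 4: The map $M_\perp \mapsto L_{(0,0)} \oplus M_\perp$ lands in $\mathcal{Z}(x)(\mathbb{F})$.} Reversing the computation in Step 3, $M := L_{(0,0)} \oplus M_\perp$ is $F$-, $V$-, and $\mathbb{Z}_{p^2}$-stable and self-dual (each summand is), and the chain condition for $\mathcal{N}(1,n-2)$ on $M_\perp$ assembles with the contributions from $L_{(0,0)}$ to give the $(1,n-1)$-chain condition for $M$. Finally, $x_1 = F^{-1}x \in Wx_1 \subset M_1$, so the defining condition $x \in M_0^\sharp = FM_1$ of Lemma \ref{leo} is satisfied and $M \in \mathcal{Z}(x)(\mathbb{F})$.

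No serious obstacle is anticipated; the proof is bookkeeping once Lemma \ref{fortytwo} and Remark \ref{thirtysix} are in hand. The only subtlety is verifying that the chain conditions for $\mathcal{N}(1,n-1)$ and $\mathcal{N}(1,n-2)$ match up correctly across the orthogonal decomposition, which boils down to the length computation of Step 3.
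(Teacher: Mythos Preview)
Your proof is correct and follows essentially the same route as the paper's: both arguments hinge on the observation that $r=0$ collapses the Kudla--Rapoport stratification to the single stratum $(a,b)=(0,0)$, whence $M_\parallel = L_{(0,0)}$ is self-dual and Remark \ref{thirtysix} yields the orthogonal splitting $M = L_{(0,0)} \oplus M_\perp$; the chain/signature conditions are then checked componentwise. Your Step 3 spells out the length bookkeeping slightly more explicitly than the paper (which phrases it via $FL_{(0,0)}=L_{(0,-1)}$), but the content is identical. One minor quibble: your cross-references in Step 1 are garbled (\ref{muff} is about isotropic subspaces of $\Lambda/\Lambda^\sharp$, not the stratification), and the signature constraint is $a-1 \le b \le a$ rather than $0 \le b \le a$; neither affects the argument.
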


\begin{proof}
Let $M_\perp\in\mathcal{N}_\perp$ be given. Using Remark \ref{thirtysix}, it follows immediately from the self-duality of $M_\perp$ that $L_{(0,0)}\oplus M_\perp$ is self-dual. The stability conditions for $L_{(0,0)}\oplus M_\perp$ follows from the ones for $M_\perp$ and $L_{(0,0)}$, and the signature condition follows from the one for $M_\perp$ using $FL_{(0,0)}=L_{(0,-1)}$. Finally, $x_1\in(L_{(0,0)}\oplus M_\perp)_1$ holds by construction. Thus $L_{(0,0)}\oplus M_\perp\in\mathcal{Z}(x)(\mathbb{F})$.\\
On the other hand, since $r=0$, the Kudla-Rapoport stratification is trivial, i.e.\ $\mathcal{Z}(x)=\mathcal{Z}(x)^{(0,0)}$. In other words, for $M\in\mathcal{Z}(x)(\mathbb{F})$, one has \[ M_\parallel=L_{(0,0)}=L_{(0,0)}^\vee=M_\parallel^\vee. \]
Thus, Lemma \ref{thirtysix} states that 
\begin{equation} \label{unnoetig} M=L_{(0,0)}\oplus M_\perp \end{equation}
and that $M_\perp$ is self-dual. The stability conditions for $M_\perp$ are trivial, and the signature condition follows from the one for $M$ and the equality $FL_{(0,0)}=L_{(0,-1)}$. Also, Eqn.\ \ref{unnoetig} shows that the two maps are inverse to each other.
\end{proof}

\begin{fliess}\label{nullbtg}
By the very same reasoning, using the ``translation lemma'' \ref{leo}, we get an identification of the set of vertices of type $t$ in $\mathcal{S}(x)$ with the set of $\mathbb{Z}_{p^2}$-lattices $\Lambda_\perp$ in $C_\perp$ satisfying the chain condition
\[ p\Lambda_\perp\subseteq\Lambda_\perp^\sharp\overset{\text{\tiny\itshape t}}{\subset}\Lambda_\perp. \]
Thus, applying Theorem 3.6 of \cite{voll}, we have an isomorphism of simplicial complexes 
\[ \mathcal{S}(x)\cong\mathfrak{B}(SU(C_\perp,ph),\mathbb{Q}_p), \]
proving Thm.\ \ref{prime}(1).
\end{fliess}

\begin{cor}
Theorem \ref{holygrail} holds true for $\underline{x}=(x)$, i.e.\ $\mathcal{Z}(x)_\textnormal{red}$ is connected.
\end{cor}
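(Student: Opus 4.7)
The strategy is to reduce the connectedness of $\mathcal{Z}(x)_\textnormal{red}$ to the connectedness of a Bruhat-Tits building, which is a standard fact. First, I would invoke Remark \ref{bts2}, which tells us that $\mathcal{Z}(x)_\textnormal{red}$ is connected if and only if the simplicial complex $\mathcal{S}(x)$ is connected. This shifts the problem entirely into the realm of Bruhat-Tits theory.

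Next, I would apply the isomorphism of simplicial complexes
\[ \mathcal{S}(x) \cong \mathfrak{B}(SU(C_\perp, ph), \mathbb{Q}_p) \]
established in \ref{nullbtg} (which is essentially Theorem \ref{prime}(1)). Under this identification, connectedness of $\mathcal{S}(x)$ becomes connectedness of the Bruhat-Tits building of the special unitary group $SU(C_\perp, ph)$ over $\mathbb{Q}_p$.

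Finally, I would appeal to the well-known fact that the Bruhat-Tits building of a connected reductive group over a local field is contractible (in particular path-connected). Since $SU(C_\perp, ph)$ is connected reductive over $\mathbb{Q}_p$, its building is connected, and the proof is complete. The main obstacle here is essentially non-existent: all the real work has already been done in establishing Proposition \ref{zero} and the lattice-theoretic reformulation \ref{nullbtg}; this corollary is just the formal packaging of those results together with the topological fact about buildings.
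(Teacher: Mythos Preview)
Your proposal is correct and follows essentially the same approach as the paper: invoke Remark \ref{bts2} to reduce to connectedness of $\mathcal{S}(x)$, use the isomorphism of \ref{nullbtg} (Thm.\ \ref{prime}(1)) to identify $\mathcal{S}(x)$ with $\mathfrak{B}(SU(C_\perp,ph),\mathbb{Q}_p)$, and then appeal to the general fact that Bruhat-Tits buildings are connected.
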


\begin{proof}
It is a general fact from Bruhat-Tits theory (see e.g.\ \cite{ti}, 2.2.1) that the simplicial complex of the Bruhat-Tits building $\mathfrak{B}(SU(C_\perp,ph),\mathbb{Q}_p)$ is connected. Now use the isomorphism of Thm.\ \ref{prime}(1) and Remark \ref{bts2}, which states that connectedness of $\mathcal{S}(x)$ implies connectedness of $\mathcal{Z}(x)_\textnormal{red}$.
\end{proof}

\begin{remark}
One can show that the map of Proposition \ref{zero} defines an isomorphism of schemes $\mathcal{Z}(x)_\textnormal{red}\cong\mathcal{N}(1,n-2)_\textnormal{red}$. Indeed, Kudla and Rapoport show that $\mathcal{Z}(x)$ is isomorphic to $\mathcal{N}(1,n-2)$ as formal schemes. The proof roughly goes as follows:

W.l.o.g.\ assume $h(x,x)=1$. Write $e$ for the idempotent element 
\[ (1_\mathbb{X}-x\circ\lambda_{\overline{\mathbb{Y}}}^{-1}\circ x^\vee\circ\lambda_\mathbb{X})\in\textnormal{End}_{\mathbb{Z}_{p^2}}(\mathbb{X}). \]
Write $\dot{\mathbb{X}}$ for the $p$-divisible subgroup $e\mathbb{X}$. One shows that $\lambda_\mathbb{X}|_{\dot{\mathbb{X}}}$ defines a $p$-principal polarization of $\dot{\mathbb{X}}$ and that $(\dot{\mathbb{X}},\iota_\mathbb{X}|_{\dot{\mathbb{X}}},\lambda_\mathbb{X}|_{\dot{\mathbb{X}}})$ is isomorphic to the basic object used in the definition of $\mathcal{N}(1,n-2)$. Now if $(X,\iota_X,\lambda_X,\rho_X)\in\mathcal{N}(S)$ is in $\mathcal{Z}(x)(S)$, then $e$ lifts to an endomorphism of $X$, which we also denote by $e$. Write $\dot{X}=eX$. Then one checks that restricting $\iota_X$, $\lambda_X$ and $\rho_X$ defines
\[ (\dot{X},\iota_{\dot{X}},\lambda_{\dot{X}},\rho_{\dot{X}})\in\mathcal{N}(1,n-2)(S) \]
and that this construction is invertible.\\
We will not need to use those results in the following.
\end{remark}

\section{Base case II: The case $r=1$}

We now come to the proof of Thm.\ \ref{prime}(2), i.e.\ to the case $r=r(x)=v(h(x,x))=1$, which we assume throughout this section. We will also prove Thm.\ \ref{holygrail} and compute the Kudla-Rapoport stratification in this case.

\begin{fliess}
As in the last section, we will relate the global structure of $\mathcal{Z}(x)(\mathbb{F})$ to the simplicial complex of the building $\mathfrak{B}(SU(C_\perp,ph),\mathbb{Q}_p)$. We know from \ref{lambda} and \cite{voll}, Thm.\ 3.6, that we have an isomorphism of simplicial complexes
\[ \mathcal{L}_\perp\cong\mathfrak{B}(SU(C_\perp,ph),\mathbb{Q}_p), \]
where the underlying set of $\mathcal{L}_\perp$ is the set of all $F$-, $V$- and $\mathbb{Z}_{p^2}$-invariant $W$-lattices $\Lambda_\perp$ in $N_\perp$ which are self-dual w.r.t.\ $\langle .,.\rangle$ and which are $\tau$-invariant and satisfy some chain condition
\[ p\Lambda_{\perp,0}\subseteq\Lambda_{\perp,0}^\sharp\subseteq\Lambda_{\perp,0}, \]
and where two distinct lattices $\Lambda_\perp$, $\tilde\Lambda_\perp$ in $\mathcal{L}_\perp$ by definition neighbour each other if and only if $\Lambda_{\perp,0}\subset\tilde\Lambda_{\perp,0}$ or $\tilde\Lambda_{\perp,0}\subset\Lambda_{\perp,0}$. We will use the terms ``vertex'', ``type'', $\mathcal{L}_\perp^{(t)}$ etc. as in the ``non-$\perp$'' situation.

Note that, since $r$ is odd, the hermitian form $h$ on $C_\perp$ has even order of determinant. Thus, the type of a vertex in $\mathfrak{B}(SU(C_\perp,ph),\mathbb{Q}_p)$ is an \itshape even \normalfont integer between 0 and $n-1$.
\end{fliess}

\begin{remark}
By Prop.\ \ref{dimension}, the variety $\mathcal{Z}(x)_\textnormal{red}$ is of pure dimension $\dim\,\mathcal{N}_\textnormal{red}$. Its irreducible components are thus all of the form $\mathcal{N}_\Lambda$ with $\Lambda\in\mathcal{L}^{\max}$. Hence in order to show connectedness it is sufficient to show that all vertices of maximal type in $\mathcal{S}(x)$ are in the same connected component of $\mathcal{S}(x)$.
\end{remark}

\begin{lem} \label{einslambda}
For any $\Lambda\in \mathcal{S}(x)^{\max}$, we have $\Lambda_\parallel=L_{(1,0)}$.
\end{lem}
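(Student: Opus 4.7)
The plan is to prove this via a parity argument on the type of $\Lambda$. Given $\Lambda \in \mathcal{S}(x)$ with $r = r(x) = 1$, the inequalities $a - 1 \leq b \leq a$, $a + b \leq r = 1$, and $b \geq 0$ recorded in \ref{lambda} leave only $(a,b) \in \{(0,0),(1,0)\}$ as possibilities for $\Lambda_\parallel = L_{(a,b)}$. So the task reduces to excluding $\Lambda_\parallel = L_{(0,0)}$ when $\Lambda$ has maximal type.

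The central step is to express the type $t(\Lambda) = \textnormal{length}_W(\Lambda_0/F\Lambda_1)$ as a sum of contributions from $\Lambda_\parallel$ and $\Lambda_\perp$. Applying Lemma \ref{fortytwo} to the self-dual bigraded lattice $\Lambda_\bullet \subset N$, both $\Lambda_\bullet/(\Lambda_\parallel \oplus \Lambda_\perp)$ and $F\Lambda_\bullet/(F\Lambda_\parallel \oplus F\Lambda_\perp)$ are graphs of $W$-linear isomorphisms (the second by $F$-equivariance of $\gamma$), so they have equal $W$-length. A short diagram chase using the inclusions $F\Lambda_\parallel \oplus F\Lambda_\perp \subseteq F\Lambda_\bullet$ and $\Lambda_\parallel \oplus \Lambda_\perp \subseteq \Lambda_\bullet$ cancels these two lengths and yields
\[ \textnormal{length}_W(\Lambda_\bullet/F\Lambda_\bullet) = \textnormal{length}_W(\Lambda_\parallel/F\Lambda_\parallel) + \textnormal{length}_W(\Lambda_\perp/F\Lambda_\perp). \]
Restricted to the degree-$0$ component, this reads $t(\Lambda) = (a - b) + t(\Lambda_\perp)$, where the parallel contribution is read off immediately from $\Lambda_{\parallel,0} = p^{-a}Wx_0$ and $F\Lambda_{\parallel,1} = p^{-b}Wx_0$ (using $Fx_1 = x_0$).

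To finish, a parity count: since $r = 1$ is odd and $\textnormal{ord}\det(C,h)$ is odd, the orthogonal complement $(C_\perp,h)$ has even determinant order, so every vertex in $\mathcal{L}_\perp$ has even type; in particular $t(\Lambda_\perp)$ is even. But $t(\Lambda) = t_{\max}$ is odd, so $a - b$ must be odd, forcing $(a,b) = (1,0)$. The only delicate point is the length identity in the middle step, which relies crucially on the $F$-equivariance of $\gamma$ from Lemma \ref{fortytwo}; the rest is bookkeeping. In fact the argument works verbatim for every $\Lambda \in \mathcal{S}(x)$, so the hypothesis $\Lambda \in \mathcal{S}(x)^{\max}$ is not actually needed in the proof.
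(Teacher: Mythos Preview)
Your length identity $t(\Lambda) = (a-b) + \textnormal{length}_W(\Lambda_{\perp,0}/F\Lambda_{\perp,1})$ is correct, and the diagram chase you sketch does establish it. The gap is in the last step: you assert that $t(\Lambda_\perp):=\textnormal{length}_W(\Lambda_{\perp,0}/F\Lambda_{\perp,1})$ is even because ``every vertex in $\mathcal{L}_\perp$ has even type''. But $\Lambda_\perp$ is a vertex in $\mathcal{L}_\perp$ only if it is self-dual with respect to $\langle.,.\rangle$, and by Remark~\ref{thirtysix} this happens precisely when $\Lambda_\parallel$ is self-dual, i.e.\ when $\Lambda_\parallel=L_{(1,0)}$. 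In the case you are trying to exclude, namely $\Lambda_\parallel=L_{(0,0)}$, the lattice $\Lambda_\perp$ is \emph{not} self-dual (indeed $\Lambda_\parallel^\vee/\Lambda_\parallel\cong\Lambda_\perp^\vee/\Lambda_\perp$ has length~$2$), so it is not a vertex in $\mathcal{L}_\perp$, and the parity constraint on vertex types simply does not apply to the quantity $\textnormal{length}_W(\Lambda_{\perp,0}/F\Lambda_{\perp,1})$. The argument is circular.

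Your closing remark that the proof ``works verbatim for every $\Lambda\in\mathcal{S}(x)$'' is in fact a disproof of your own argument: Prop.~\ref{kodim1} and Example~\ref{drei} exhibit (many) vertices $\tilde\Lambda\in\mathcal{S}(x)$ of non-maximal type with $\tilde\Lambda_\parallel=L_{(0,0)}$; for these your identity gives $t(\tilde\Lambda_\perp)=t(\tilde\Lambda)$, which is odd. So the maximal-type hypothesis is genuinely needed. The paper's proof uses it in an essential way: for $n$ odd, $t_{\max}=n$ forces $F\Lambda_1=p\Lambda_0$ and hence $b=a-1$ directly; for $n$ even, one assumes $\Lambda_\parallel=L_{(0,0)}$ and derives from the $\overline{F}$-equivariance of $\gamma$ a contradiction with the signature condition for $\Lambda_\perp$.
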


\begin{proof}
Let $\Lambda\in \mathcal{S}(x)^{\max}$. From the general considerations in \ref{lambda}, writing $\Lambda_\parallel=L_{(a,b)}$, we get either $(a,b)=(1,0)$ or $(a,b)=(0,0)$. We have to show that the latter case does not occur.

We first assume $n$ to be odd. Then $t_{\max}=n$ and thus the signature condition for $\Lambda$ may be written as
\[ p\Lambda_0=F\Lambda_1\overset{\text{\tiny\itshape n}}{\subset}\Lambda_0,\quad p\Lambda_1\overset{\text{\tiny\itshape n}}{\subset}F\Lambda_0=\Lambda_1. \]
In particular, we have
\[ L_{(b,a-1)}=F\Lambda_\parallel=p\Lambda_{\parallel,0}\oplus\Lambda_{\parallel,1}=L_{(a-1,b)}. \]
This means $b=a-1$, and if we ask for $\Lambda$ to be in $\mathcal{S}(x)$, we get $\Lambda_\parallel=L_{(1,0)}$.

Now assume $n$ to be even. Then $t_{\max}=n-1$ and a vertex $\Lambda$ of maximal type satisfies the signature condition 
\[ p\Lambda_0\overset{\text{\tiny 1}}{\subset}F\Lambda_1\overset{\text{\tiny{\itshape n-\normalfont 1}}}{\subset}\Lambda_0,\ p\Lambda_1\overset{\text{\tiny{\itshape n-\normalfont 1}}}{\subset}F\Lambda_0\overset{\text{\tiny 1}}{\subset}\Lambda_1. \]
Now unfortunately $F\Lambda_\parallel=p(\Lambda_0)_\parallel\oplus(\Lambda_1)_\parallel$ does not follow immediately, so we have to put more work into this case.

Assume $\Lambda_\parallel=L_{(0,0)}$. By Lemma \ref{fortytwo}, the $W$-module $\Lambda_\bullet/(\Lambda_\parallel\oplus\Lambda_\perp)$ is the graph of an isomorphism $\gamma:\Lambda_\parallel^\vee/\Lambda_\parallel\rightarrow\Lambda_\perp^\vee/\Lambda_\perp$. Since $\textnormal{pr}_\parallel$ and $\textnormal{pr}_\perp$ map $N_0$ to $N_{\parallel,0}=(N_\parallel)_0$ and $N_{\perp,0}$, respectively, this isomorphism restricts to an isomorphism
\[ \gamma_0:\Lambda_{\parallel,1}^\vee/\Lambda_{\parallel,0}\rightarrow\Lambda_{\perp,1}^\vee/\Lambda_{\perp,0} \]
whose graph is $\Lambda_0/(\Lambda_{\parallel,0}\oplus\Lambda_{\perp,0})$. Of course, an analogous result holds for the degree 1 component. We have inclusion diagrams
\[
\begin{matrix}
 {}&{}&\Lambda_{\parallel,1}^\vee\oplus\Lambda_{\perp,1}^\vee\\
 {}&{}&\cup\\
 F\Lambda_1&\overset{n-1}{\subset}&\Lambda_0\\
 \cup&{}&\cup\\
 F(\Lambda_{\parallel,1}\oplus\Lambda_{\perp,1})&\overset{\alpha}{\subset}&\Lambda_{\parallel,0}\oplus\Lambda_{\perp,0},
 \end{matrix}
 \qquad\qquad
 \begin{matrix}
 \Lambda_{\parallel,0}^\vee\oplus\Lambda_{\perp,0}^\vee{}&{}\\
 \cup&{}&{}\\
 \Lambda_1&\overset{1}{\supset}&F\Lambda_0\\
 \cup&{}&\cup\\
 \Lambda_{\parallel,1}\oplus\Lambda_{\perp,1}&\overset{\beta}{\supset}&F(\Lambda_{\parallel,0}\oplus\Lambda_{\perp,0}).
 \end{matrix}
\]
The indices for the inclusions in the middle columns all equal 1 by the graph condition just mentioned, since $\Lambda_{\parallel}^\vee/\Lambda_{\parallel}=\mathbb{F}\cdot\overline{p^{-1}x_0}\oplus\mathbb{F}\cdot\overline{p^{-1}x_1}$. The index for the left column equals 1 by applying $F$ on the third column, while the index for the right column equals 1 by applying $F$ on the second column. Finally, the indices in the middle row come from the signature condition for $\Lambda$. It follows that $\alpha=n-1$ and $\beta=1$, hence the signature condition for $\Lambda_\parallel\oplus\Lambda_\perp\subsetneq\Lambda_\bullet$:
\begin{align*}
p(\Lambda_{\parallel,0}\oplus\Lambda_{\perp,0})\overset{\text{\tiny 1}}{\subset}F(\Lambda_{\parallel,1}\oplus\Lambda_{\perp,1})\overset{\text{\tiny{\itshape n-\normalfont 1}}}{\subset}\Lambda_{\parallel,0}\oplus\Lambda_{\perp,0},\\
p(\Lambda_{\parallel,1}\oplus\Lambda_{\perp,1})\overset{\text{\tiny{\itshape n-\normalfont 1}}}{\subset}F(\Lambda_{\parallel,0}\oplus\Lambda_{\perp,0})\overset{\text{\tiny 1}}{\subset}\Lambda_{\parallel,1}\oplus\Lambda_{\perp,1}.
\end{align*}
By assumption, we have
\[ F\Lambda_\parallel=FL_{(0,0)}=L_{(0,-1)}=\Lambda_{\parallel,0}\oplus p\Lambda_{\parallel,1} \]
and thus
\begin{equation} \label{sig} p\Lambda_{\perp,0}=F\Lambda_{\perp,1}\overset{\text{\tiny{\itshape n-\normalfont 1}}}{\subset}\Lambda_{\perp,0},\quad p\Lambda_{\perp,1}\overset{\text{\tiny{\itshape n-\normalfont 1}}}{\subset}F\Lambda_{\perp,0}=\Lambda_{\perp,1}. \end{equation}
On the other hand,
\[ \Lambda_\parallel^\vee/\Lambda_\parallel=L_{(1,1)}/L_{(0,0)}=\mathbb{F}\cdot\overline{p^{-1}x_0}\oplus\mathbb{F}\cdot\overline{p^{-1}x_1} \]
is endowed with a $\sigma$-linear endomorphism $\overline{F}$ induced by the Frobenius $F$. As usual, we write $\overline{F}_0$ resp.\ $\overline{F}_1$ for the restriction of $\overline{F}$ to $(\Lambda_\parallel^\vee/\Lambda_\parallel)_0=\Lambda_{\parallel,1}^\vee/\Lambda_{\parallel,0}$ resp.\ $(\Lambda_\parallel^\vee/\Lambda_\parallel)_1$. Since $FL_{(1,1)}=L_{(1,0)}$, we get that $\overline{F}_1$ is an isomorphism, while $\overline{F}_0$ is the zero map.

We have the same notions of $\overline{F}_0$ and $\overline{F}_1$ on $\Lambda_\perp^\vee/\Lambda_\perp$. By Lemma \ref{fortytwo}, we have a $\mathbb{Z}_{p^2}$-linear isomorphism \[ \gamma:\Lambda_\parallel^\vee/\Lambda_\parallel\rightarrow\Lambda_\perp^\vee/\Lambda_\perp, \]
and $\gamma$ is $\overline{F}$-equivariant.

Thus, on $\Lambda_\perp^\vee/\Lambda_\perp$, we have that $\overline{F}_1$ is an isomorphism, while $\overline{F}_0$ is the zero map. The latter means $F\Lambda_{\perp,1}^\vee=\Lambda_{\perp,1}$. By the injectivity of $F$, we get that $F\Lambda_{\perp,0}$ is \itshape properly \normalfont contained in $\Lambda_{\perp}$, contradicting the signature condition \eqref{sig}.
\end{proof}

\begin{cor} \label{wurscht}
For any $\Lambda\in \mathcal{S}(x)^{\max}$, the corresponding self-dual $W$-lattice $\Lambda_\bullet$ in $N$ is of the form $\Lambda_\bullet=L_{(1,0)}\oplus\Lambda_\perp$ with
\[ \Lambda_\perp\in\mathcal{L}_\perp^{\max}=\mathcal{L}_\perp^{(t_{\max}-1)}. \]
\end{cor}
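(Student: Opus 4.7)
The plan is to extract the orthogonal decomposition of $\Lambda_\bullet$ from Lemma \ref{einslambda} via the self-duality criterion in Remark \ref{thirtysix}, and then read off the type of the perpendicular part by carefully matching the chain condition on $\Lambda_\bullet$ against the contribution coming from $L_{(1,0)}$.

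First I would recall from the computation in paragraph \ref{lambda} that $L_{(a,b)}^\vee=L_{(r-b,r-a)}$. With $r=1$ and $(a,b)=(1,0)$, this gives $L_{(1,0)}^\vee=L_{(1,0)}$, so the lattice $L_{(1,0)}=\Lambda_\parallel$ provided by Lemma \ref{einslambda} is self-dual with respect to $\langle .,.\rangle$. The key equivalence of Remark \ref{thirtysix} then yields the orthogonal decomposition $\Lambda_\bullet=\Lambda_\parallel\oplus\Lambda_\perp=L_{(1,0)}\oplus\Lambda_\perp$, together with the self-duality of $\Lambda_\perp$ inside $N_\perp$. Since both $\Lambda_\bullet$ and $L_{(1,0)}$ are $F$-, $V$- and $\mathbb{Z}_{p^2}$-stable, the complement $\Lambda_\perp$ inherits the same stabilities, and since $\Lambda_\bullet$ is $\tau$-invariant while $L_{(1,0)}$ obviously is, so is $\Lambda_\perp$. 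This already puts $\Lambda_\perp$ into $\mathcal{L}_\perp$ once the appropriate chain condition is established.

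To pin down the type, I would decompose the signature chain $p\Lambda_0\overset{n-t_{\max}}{\subset} F\Lambda_1\overset{t_{\max}}{\subset}\Lambda_0$ using the direct sum $\Lambda_i=\Lambda_{\parallel,i}\oplus\Lambda_{\perp,i}$. From $\Lambda_{\parallel,0}=p^{-1}Wx$ and $\Lambda_{\parallel,1}=Wx_1$ one computes $p\Lambda_{\parallel,0}=Wx=F\Lambda_{\parallel,1}$ and $[\Lambda_{\parallel,0}:F\Lambda_{\parallel,1}]=1$. Hence the parallel part contributes $0$ to the first inclusion and $1$ to the second, forcing
\[
[F\Lambda_{\perp,1}:p\Lambda_{\perp,0}]=n-t_{\max}, \qquad [\Lambda_{\perp,0}:F\Lambda_{\perp,1}]=t_{\max}-1.
\]
Using $F\Lambda_{\perp,1}=\Lambda_{\perp,0}^\sharp$ (self-duality of $\Lambda_\perp$ translated via $\Lambda_{\perp,1}=F^{-1}\Lambda_{\perp,0}^\sharp$), this is exactly the chain condition $p\Lambda_{\perp,0}\overset{n-t_{\max}}{\subset}\Lambda_{\perp,0}^\sharp\overset{t_{\max}-1}{\subset}\Lambda_{\perp,0}$ in the $(n-1)$-dimensional hermitian space $C_\perp$, so $\Lambda_\perp\in\mathcal{L}_\perp^{(t_{\max}-1)}$.

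Finally, I would check that $t_{\max}-1$ really is the maximal type in $\mathcal{L}_\perp$: since $r=1$ is odd, $h|_{C_\perp}$ has even determinant order, so types in $\mathcal{L}_\perp$ are even integers between $0$ and $n-1$. When $n$ is odd, $t_{\max}=n$, and $t_{\max}-1=n-1$ is the largest even integer $\le n-1$; when $n$ is even, $t_{\max}=n-1$, and $t_{\max}-1=n-2$ is again the largest such integer. In both cases $\Lambda_\perp\in\mathcal{L}_\perp^{\max}$, yielding the claim. No step looks like a genuine obstacle once Lemma \ref{einslambda} is in hand — the work is purely bookkeeping of inclusion indices — but I would be careful about the index computation in the chain condition, as that is where the odd-$n$ versus even-$n$ cases might look deceptively different even though they collapse to the same conclusion.
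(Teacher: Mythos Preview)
Your proof is correct and follows exactly the paper's approach: invoke Lemma \ref{einslambda} for $\Lambda_\parallel=L_{(1,0)}$, use its self-duality together with Remark \ref{thirtysix} to split off $\Lambda_\perp$, and then verify the type. The paper dismisses the last part as ``straightforward''; you have simply carried out that straightforward index-count and parity check explicitly, and every step matches.
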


\begin{proof}
Lemma \ref{einslambda} states $\Lambda_\parallel=L_{(1,0)}$. This is self-dual, thus by Remark \ref{thirtysix}, we get the desired direct sum decomposition. Checking that $\Lambda_\perp$ is a vertex of maximal type $t_{\max}-1$ in $C_\perp$ is straightforward.
\end{proof}

\begin{fliess} \label{dooof}
We now come to the proof of Thm.\ \ref{prime}(2). It is immediate that, given a vertex $\Lambda_\perp\in\mathcal{L}_\perp$ in $C_\perp$, the lattice 
\[ \Lambda:=\left(L_{(1,0)}\oplus\Lambda_\perp\right)_0^{\tau=1} \]
is a vertex in $\mathcal{S}(x)$ of type $t(\Lambda)=t(\Lambda_\perp)+1$. Thus, we have injections of sets
\[ \Phi:\mathcal{L}_\perp^{(t)}\rightarrow \mathcal{S}(x)^{(t+1)},\quad \Lambda_\perp\mapsto \left(L_{(1,0)}\oplus\Lambda_\perp\right)_0^{\tau=1}. \]
We just proved that this map is a bijection in maximal type. However, it is never surjective in lower types. Since $\mathcal{Z}(x)_\textnormal{red}$ is of pure codimension 0, any vertex in $\mathcal{S}(x)$ is contained in some vertex in $\mathcal{S}(x)^{\max}$ and we can thus still compute $\mathcal{S}(x)$ from $\mathcal{L}_\perp$. We have
\[ \mathcal{S}(x)=\left\{ \Lambda\in\mathcal{L}\ \mid\ \exists\ \tilde\Lambda_\perp\in\mathcal{L}_\perp:\ \Lambda\subseteq\left(L_{(1,0)}\oplus\tilde\Lambda_\perp\right)_0^{\tau=1}\right\}. \]
Also, the induced map
\[ \Phi:\mathcal{L}_\perp\rightarrow \mathcal{S}(x) \]
is obviously a morphism of simplicial complexes. That is, if two vertices $\Lambda_\perp$, $\tilde\Lambda_\perp$ neighbour each other in $\mathfrak{B}(SU(C_\perp,ph),\mathbb{Q}_p)$, then $\Lambda=(L_{(1,0)}\oplus\Lambda_\perp)_0^{\tau=1}$ and $\tilde\Lambda=(L_{(1,0)}\oplus\tilde\Lambda_\perp)_0^{\tau=1}$ neighbour each other in $\mathcal{S}(x)$. This proves Thm.\ \ref{prime}(2).
\end{fliess}

\begin{remark}
Let $\Lambda\in\mathcal{L}$. Assume $\Lambda_\bullet=L_{(1,0)}\oplus\Lambda_\perp$. Then neighbours of $\Lambda_\perp$ give rise to neighbours of $\Lambda$ via $\Phi$. However, the vertex $\Lambda$ has more neighbours than $\Lambda_\perp$, and it may occur (not in maximal type) that two Bruhat-Tits strata intersect although the intersection of the corresponding vertices in $C_\perp$ (if those exist) is not a vertex in $C_\perp$.
\end{remark}

\begin{cor} \label{eins}
Theorem \ref{holygrail} holds true for $\underline{x}=(x)$, i.e.\ $\mathcal{Z}(x)_\textnormal{red}$ is connected.
\end{cor}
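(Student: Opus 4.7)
The plan is to apply Remark \ref{bts2} and reduce the connectedness of $\mathcal{Z}(x)_\textnormal{red}$ to that of the simplicial complex $\mathcal{S}(x)$. Since $r(x)=1$, none of the valuations involved is zero, so Proposition \ref{dimension} yields $\dim\,\mathcal{Z}(x)_\textnormal{red}=\lfloor(n-1)/2\rfloor=\dim\,\mathcal{N}_\textnormal{red}$. By Proposition \ref{bts}, this forces the irreducible components of $\mathcal{Z}(x)_\textnormal{red}$ to be exactly the closed Bruhat-Tits strata $\mathcal{N}_\Lambda$ with $\Lambda\in \mathcal{S}(x)^{\max}$, and consequently every $\Lambda'\in \mathcal{S}(x)$ is a face of some $\Lambda\in \mathcal{S}(x)^{\max}$. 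Hence it suffices to show that any two vertices of $\mathcal{S}(x)^{\max}$ lie in the same connected component of $\mathcal{S}(x)$.

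For this I would use the machinery already set up. Corollary \ref{wurscht} and paragraph \ref{dooof} show that the map
\[ \Phi:\mathcal{L}_\perp\rightarrow \mathcal{S}(x),\quad \Lambda_\perp\mapsto\bigl(L_{(1,0)}\oplus\Lambda_\perp\bigr)_0^{\tau=1} \]
is an injective morphism of simplicial complexes which restricts to a bijection $\mathcal{L}_\perp^{\max}\rightarrow \mathcal{S}(x)^{\max}$. Given $\Lambda,\tilde\Lambda\in \mathcal{S}(x)^{\max}$, write $\Lambda=\Phi(\Lambda_\perp)$ and $\tilde\Lambda=\Phi(\tilde\Lambda_\perp)$ for unique $\Lambda_\perp,\tilde\Lambda_\perp\in\mathcal{L}_\perp^{\max}$.

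Next, I invoke the general fact (already used in the $r=0$ case) that the simplicial complex of the Bruhat-Tits building $\mathfrak{B}(SU(C_\perp,ph),\mathbb{Q}_p)\cong\mathcal{L}_\perp$ is connected. Thus there is a sequence $\Lambda_\perp=\Lambda_\perp^{(0)},\Lambda_\perp^{(1)},\dots,\Lambda_\perp^{(k)}=\tilde\Lambda_\perp$ in $\mathcal{L}_\perp$ with consecutive vertices neighbouring each other. Applying the simplicial-complex morphism $\Phi$ produces a chain of neighbouring vertices $\Phi(\Lambda_\perp^{(i)})$ in $\mathcal{S}(x)$ joining $\Lambda$ to $\tilde\Lambda$. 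This shows $\mathcal{S}(x)^{\max}$ is contained in a single connected component of $\mathcal{S}(x)$, and together with the reduction above completes the proof.

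There is really no serious obstacle here: all the substantive work has been absorbed into Theorem \ref{prime}(2), specifically into Lemma \ref{einslambda} (which forces $\Lambda_\parallel=L_{(1,0)}$ at maximal type and accounts for the case distinction between odd and even $n$) and the construction of $\Phi$. The only mild subtlety is that $\Phi$ fails to be surjective outside maximal type, so one must use pure-dimensionality of $\mathcal{Z}(x)_\textnormal{red}$ to extend connectedness from $\mathcal{S}(x)^{\max}$ to all of $\mathcal{S}(x)$; this is precisely the role played by Proposition \ref{dimension}.
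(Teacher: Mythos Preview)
Your proof is correct and follows essentially the same approach as the paper's own argument: reduce to connectedness of $\mathcal{S}(x)$ via Remark~\ref{bts2}, use pure-dimensionality to reduce to connecting vertices in $\mathcal{S}(x)^{\max}$, invoke Corollary~\ref{wurscht} to identify $\mathcal{S}(x)^{\max}$ with $\mathcal{L}_\perp^{\max}$ via $\Phi$, and transport a path from the connected building $\mathcal{L}_\perp$ back into $\mathcal{S}(x)$. Your write-up is in fact slightly more explicit than the paper's in spelling out why pure-dimensionality lets one pass from $\mathcal{S}(x)^{\max}$ to all of $\mathcal{S}(x)$.
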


\begin{proof}
Consider two arbitrary irreducible components $\mathcal{N}_\Lambda$, $\mathcal{N}_{\tilde\Lambda}$ of $\mathcal{Z}(x)_\textnormal{red}$. As $\mathcal{Z}(x)_\textnormal{red}$ is of pure codimension 0 in $\mathcal{N}_\textnormal{red}$, those are irreducible components in $\mathcal{N}_\textnormal{red}$ and the corresponding vertices $\Lambda$ and $\tilde\Lambda$ are of maximal type. By the previous corollary, $\Lambda_\perp$ and $\tilde\Lambda_\perp$ are vertices of maximal type in $\mathcal{L}_\perp$.

As in the case $r=0$, the simplicial complex $\mathfrak{B}(SU(C_\perp,ph),\mathbb{Q}_p)$ is connected by Bruhat-Tits theory. By our identification of $\mathcal{L}_\perp$ with that complex, we find a ``path''
\[ \Lambda_\perp=\Lambda_\perp^{(0)},\Lambda_\perp^{(1)},\dots,\Lambda_\perp^{(s)}=\tilde\Lambda_\perp \]
consisting of vertices in $\mathcal{L}_\perp$ such that, for $1\leq i\leq s$, the two consecutive vertices $\Lambda_\perp^{(i-1)}$, $\Lambda_\perp^{(i)}$ neighbour each other. Now set
\[ \Lambda^{(i)}:=\left(L_{(1,0)}\oplus\Lambda_\perp^{(i)}\right)_0^{\tau=1}. \]
The $\Lambda^{(i)}$ are vertices in $C$, all in $\mathcal{S}(x)$, with $\Lambda^{(0)}=\Lambda$, $\Lambda^{(s)}=\tilde\Lambda$, and two consecutive $\Lambda^{(i)}$ neighbour each other.

Thus, $\Lambda$ and $\tilde\Lambda$ are in the same connected component of the simplicial complex $\mathcal{S}(x)$.
\end{proof}

We will now give an explicit computation of the Kudla-Rapoport stratification $\mathcal{Z}(x)(\mathbb{F})=\mathcal{Z}(x)^{(0,0)}\cup\mathcal{Z}(x)^{(1,0)}$.

\begin{lem}[Kudla-Rapoport, \cite{kr2}, Prop.\ 5.2]
All lattices $M$ in $\mathcal{Z}(x)^{(1,0)}$ have $\tau$-invariant degree zero component $M_0$.
\end{lem}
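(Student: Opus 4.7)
The plan is to leverage the decomposition $M = M_\parallel \oplus M_\perp$ afforded by Remark~\ref{thirtysix}. Since $r=1$, the lattice $L_{(1,0)}$ is self-dual with respect to $\langle\cdot,\cdot\rangle$: indeed, the formula $L_{(a,b)}^\vee = L_{(r-b,r-a)}$ gives $L_{(1,0)}^\vee = L_{(1,0)}$. Therefore $M_\parallel = L_{(1,0)} = M_\parallel^\vee$, and Remark~\ref{thirtysix} forces $M$ to split orthogonally as $M = M_\parallel \oplus M_\perp$ with $M_\perp$ self-dual. In particular $M_0 = p^{-1}Wx_0 \oplus M_{\perp,0}$. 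Since $\tau x_0 = x_0$, the parallel summand $p^{-1}Wx_0$ is automatically $\tau$-stable, so the problem reduces to showing $\tau M_{\perp,0} = M_{\perp,0}$.

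The key ingredient is to translate the signature chain condition on $M$ into relations for $M_\perp$ alone. Using $Fx_1 = x_0$ and $Fx_0 = px_1$ (both encoded in $FL_{(a,b)} = L_{(b,a-1)}$), we obtain
\[ FM_1 = Wx_0 \oplus FM_{\perp,1}, \qquad FM_0 = Wx_1 \oplus FM_{\perp,0}. \]
Compared to $M_0 = p^{-1}Wx_0 \oplus M_{\perp,0}$ and $pM_1 = pWx_1 \oplus pM_{\perp,1}$, the parallel components of the two index-$1$ inclusions $FM_1 \overset{1}{\subset} M_0$ and $pM_1 \overset{1}{\subset} FM_0$ already account for the full index. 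Hence the perpendicular components contribute index $0$, yielding the two identities $FM_{\perp,1} = M_{\perp,0}$ and $FM_{\perp,0} = pM_{\perp,1}$.

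With these relations in hand, and using $\tau = V^{-1}F$ together with $V^{-1} = p^{-1}F$, one finishes in a single line:
\[ \tau M_{\perp,0} = V^{-1}FM_{\perp,0} = V^{-1}(pM_{\perp,1}) = p\cdot p^{-1}FM_{\perp,1} = FM_{\perp,1} = M_{\perp,0}. \]
The argument is not conceptually difficult; the main obstacle is just the careful bookkeeping of indices that isolates the perpendicular contribution of the signature chain on $M$. Once the two chain identities $FM_{\perp,1} = M_{\perp,0}$ and $FM_{\perp,0} = pM_{\perp,1}$ have been extracted, the rest is formal.
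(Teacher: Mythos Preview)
Your proof is correct and follows essentially the same route as the paper's: both use the self-duality of $L_{(1,0)}$ together with Remark~\ref{thirtysix} to split $M=L_{(1,0)}\oplus M_\perp$, then isolate the perpendicular part of the signature chain to obtain $FM_{\perp,1}=M_{\perp,0}$ and $FM_{\perp,0}=pM_{\perp,1}$, and conclude $\tau M_{\perp,0}=M_{\perp,0}$. The only cosmetic difference is that the paper packages the last step as $F^2M_\perp=pM_\perp$, whereas you unwind $\tau=V^{-1}F=p^{-1}F^2$ explicitly.
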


\begin{proof}
Let $M\in\mathcal{Z}(x)^{(1,0)}$. Then by self-duality of $L_{(1,0)}$ and Remark \ref{thirtysix}, we have $M=M_\parallel\oplus M_\perp=L_{(1,0)}\oplus M_\perp$. It is thus sufficient to show $\tau$-invariance of both $M_{\parallel,0}$ and $M_{\perp,0}$. The former is automatic, because $x\in C$. For the latter, we use the signature condition for $M$,
\begin{align*}
p(M_{\parallel,0}\oplus M_{\perp,0})\overset{\text{\tiny{\itshape n-\normalfont 1}}}{\subset}F(M_{\parallel,1}\oplus M_{\perp,1})\overset{\text{\tiny 1}}{\subset}M_{\parallel,0}\oplus M_{\perp,0},\\
p(M_{\parallel,1}\oplus M_{\perp,1})\overset{\text{\tiny 1}}{\subset}F(M_{\parallel,0}\oplus M_{\perp,0})\overset{\text{\tiny{\itshape n-\normalfont 1}}}{\subset}M_{\parallel,1}\oplus M_{\perp,1}.
\end{align*}
By assumption, we have
\[ FM_\parallel=FL_{(1,0)}=L_{(0,0)}=pM_{\parallel,0}\oplus M_{\parallel,1} \]
and thus
\[ pM_{\perp,0}\overset{\text{\tiny{\itshape n-\normalfont 1}}}{\subset}FM_{\perp,1}=M_{\perp,0},\quad pM_{\perp,1}=FM_{\perp,0}\overset{\text{\tiny{\itshape n-\normalfont 1}}}{\subset}M_{\perp,1}. \]
In particular, $F^2M_\perp=pM_\perp$, i.e.\ $\tau M_{\perp,0}=M_{\perp,0}$.
\end{proof}

The $\tau$-invariance of $M_0$ means that $M_0$ arises from some lattice in $C$ via scalar extension. Thus, all $M\in\mathcal{Z}(x)^{(1,0)}$ are of the form $M=\Lambda_\bullet=\Lambda_W\oplus F^{-1}\Lambda_W^\sharp$ for some type 1 vertex $\Lambda\in\mathcal{L}^{(1)}$. The converse does not hold in general, as will be clear from the examples $n=3,4$ below. We thus still have to give a necessary and sufficient condition for $\Lambda\in\mathcal{L}^{(1)}$ to define a lattice $\Lambda_\bullet$ in $\mathcal{Z}(x)^{(1,0)}$. Note that $\Lambda_\bullet\in\mathcal{Z}(x)^{(1,0)}$ means that $\Lambda_\bullet=L_{(1,0)}\oplus\Lambda_\perp$, i.e.\ $\Lambda=p^{-1}\mathbb{Z}_{p^2}x\oplus(\Lambda_\perp)_0^{\tau=1}$.

\begin{prop} \label{beliebigekodim}
Let $\Lambda\in\mathcal{S}(x)$ be of arbitrary type $t$. Then $\Lambda_\bullet=\Lambda_\parallel\oplus\Lambda_\perp$ if and only if all irreducible components of $\mathcal{N}_\textnormal{red}$ which contain $\mathcal{N}_\Lambda$ are contained in $\mathcal{Z}(x)_\textnormal{red}$.
\end{prop}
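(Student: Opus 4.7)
I would reduce both conditions to statements about the class $\bar{x}:=x+p\Lambda$ in the non-degenerate hermitian $\mathbb{F}_{p^2}$-space $V':=\Lambda^\sharp/p\Lambda$ of \S\ref{muff}. On the algebraic side, Remark \ref{thirtysix} shows that $\Lambda_\bullet=\Lambda_\parallel\oplus\Lambda_\perp$ iff $\Lambda_\parallel$ is self-dual. In the case $r=1$, the constraints recalled in \S\ref{lambda} (namely $a-1\leq b\leq a$, $a+b\leq r=1$, and $b\geq 0$) leave only $\Lambda_\parallel=L_{(a,b)}$ with $(a,b)\in\{(0,0),(1,0)\}$, and of these exactly $L_{(1,0)}$ is self-dual. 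Lemma \ref{leo} then rephrases $\Lambda_\parallel=L_{(1,0)}$ as $p^{-1}x\in\Lambda$, i.e.\ as the vanishing of $\bar x$ in $V'$.

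On the geometric side, the irreducible components of $\mathcal{N}_\textnormal{red}$ containing $\mathcal{N}_\Lambda$ are precisely the $\mathcal{N}_{\tilde\Lambda}$ with $\tilde\Lambda\in\mathcal{L}^{\max}$ and $\tilde\Lambda\supseteq\Lambda$. By \S\ref{muff} the assignment $\tilde\Lambda\mapsto U:=p\tilde\Lambda/p\Lambda$ is a bijection between these and the maximal isotropic subspaces $U\subseteq V'$, and moreover $\tilde\Lambda^\sharp/p\Lambda=U^\perp$. Since $p\Lambda\subseteq p\tilde\Lambda\subseteq\tilde\Lambda^\sharp$, the condition $\tilde\Lambda\in\mathcal{S}(x)$ (i.e.\ $x\in\tilde\Lambda^\sharp$) is equivalent to $\bar x\in U^\perp$, i.e.\ to $\bar x\perp U$. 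So the geometric hypothesis of the proposition becomes: $\bar x\perp U$ for every maximal isotropic $U\subseteq V'$.

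The proposition thus reduces to the equivalence $\bar x=0\Leftrightarrow\bar x\perp U$ for every maximal isotropic $U\subseteq V'$. The direction ``$\Rightarrow$'' is trivial. For the converse I split into cases. If $t=t_{\max}$, then $\mathcal{N}_\Lambda$ is the only irreducible component containing itself, the right-hand side of the proposition is tautological, and Corollary \ref{wurscht} directly yields $\Lambda_\parallel=L_{(1,0)}$. If $t<t_{\max}$, then $\dim V'=n-t\geq 2$ and the form $\bar h$ on $V'$ has positive Witt index. By Witt's extension theorem every isotropic vector of $V'$ lies in some maximal isotropic subspace, so $\bar x$ is orthogonal to every isotropic vector. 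The decisive linear-algebra step — and the main obstacle of the proof — is the observation that in a non-degenerate hermitian $\mathbb{F}_{p^2}$-space of dimension $\geq 2$ the isotropic vectors span the whole space. This follows from the Witt decomposition $V'=\mathbb{H}^r\oplus V'_0$ with $\dim V'_0\leq 1$: each hyperbolic plane is spanned by its two isotropic lines, and, when $V'_0$ is present, its generator can be combined with an element of a hyperbolic plane into an isotropic vector having a non-zero $V'_0$-component. Non-degeneracy of $\bar h$ then forces $\bar x=0$, as required.
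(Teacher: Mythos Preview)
Your proof is correct. Both you and the paper reduce the algebraic side to the vanishing of $\bar x$ in $V'=\Lambda^\sharp/p\Lambda$, and both identify the geometric side with a condition on the maximal isotropic subspaces $U\subset V'$. The difference lies in how the ``if'' direction is concluded. The paper applies Lemma~\ref{einslambda} to each $\tilde\Lambda\supseteq\Lambda$ of maximal type to obtain $p^{-1}x\in\tilde\Lambda$ (equivalently $\bar x\in U$), and then uses the fact that $\bigcap_U U=0$, i.e.\ $\Lambda=\bigcap\tilde\Lambda$, to conclude $p^{-1}x\in\Lambda$. You instead stay with the weaker condition $\bar x\in U^\perp$ that comes directly from $\tilde\Lambda\in\mathcal{S}(x)$, and for $t<t_{\max}$ appeal to the dual linear-algebra fact that the isotropic vectors span $V'$, so that $\bar x\in(\sum_U U)^\perp=0$; this bypasses Lemma~\ref{einslambda} entirely in that range. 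Your route is therefore a bit more elementary for $t<t_{\max}$ but requires a case split and an explicit appeal to Corollary~\ref{wurscht} when $t=t_{\max}$ (where $\dim V'\leq 1$ and the spanning argument no longer applies); the paper's argument is uniform in $t$ but invokes Lemma~\ref{einslambda} throughout.
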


\begin{proof}
By \ref{muff}, the set of irreducible components $\mathcal{N}_{\tilde\Lambda}$ of $\mathcal{N}_\textnormal{red}$ which contain $\mathcal{N}_\Lambda$ is in bijection with the set of $((t^{\max}-t)/2)$-dimensional isotropic subspaces of the $(n-t)$-dimensional hermitian $\mathbb{F}_{p^2}$-vector space $(V',\overline{h})$, where $V':=\Lambda^\sharp/p\Lambda$ and $\overline{h}$ is induced by $h$. As the hermitian form $\overline{h}$ is non-degenerate, the intersection of all such subspaces is trivial, implying
\begin{equation} \label{schnitt} \Lambda=\bigcap_{\{\tilde\Lambda\in\mathcal{L}^{\max}\ |\ \Lambda\subset\tilde\Lambda\}} \tilde\Lambda. \end{equation}
If we assume that all irreducible components $\mathcal{V}(\tilde\Lambda)$ of $\mathcal{N}(\mathbb{F})$ are in $\mathcal{Z}(x)(\mathbb{F})$, then all $\tilde\Lambda\supset\Lambda$ of type $t_{\max}$ satisfy $\tilde\Lambda_\parallel=L_{(1,0)}$ by Lemma \ref{einslambda}. Thus $\Lambda_\parallel=L_{(1,0)}$ and Remark \ref{thirtysix} implies the ``if'' part.\\
For the ``only if'' part, note that
\[ \Lambda_\bullet=\Lambda_\parallel\oplus\Lambda_\perp\Rightarrow\Lambda_\parallel=L_{(1,0)}\Rightarrow p^{-1}x\in\Lambda\Leftrightarrow x\in p\Lambda \]
and that, for any vertex $\tilde\Lambda$ containing $\Lambda$, we have the chain of inclusions
\[ p\Lambda\subseteq p\tilde\Lambda\subseteq\tilde\Lambda^\sharp\subseteq\Lambda^\sharp. \]
Thus $x\in p\Lambda$ implies $x\in\tilde\Lambda^\sharp$, which means $\tilde\Lambda\in \mathcal{S}(x)$. The statement on irreducible components is simply obtained by specializing to $t(\tilde\Lambda)=t_{\max}$ in these considerations.
\end{proof}

\begin{prop} \label{kodim1}
Let $\Lambda\in \mathcal{S}(x)$ be of type $t_{\max}-2$. Then if $\Lambda_\bullet=\Lambda_\parallel\oplus\Lambda_\perp$, i.e.\ if $\Lambda$ comes from a vertex of $\mathfrak{B}(SU(C_\perp),\mathbb{Q}_p)$, then all the irreducible components of $\mathcal{N}_\textnormal{red}$ passing through the codimension 1 stratum $\mathcal{N}_\Lambda$ are in $\mathcal{Z}(x)_\textnormal{red}$.

Otherwise, there is exactly one irreducible component $\mathcal{N}_{\tilde\Lambda}$ of $\mathcal{N}_\textnormal{red}$ which contains $\mathcal{N}_{\Lambda}$ and is contained in $\mathcal{Z}(x)_\textnormal{red}$.
\end{prop}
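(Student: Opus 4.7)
The plan is to translate the counting of irreducible components through $\mathcal{N}_\Lambda$ that lie in $\mathcal{Z}(x)_\textnormal{red}$ into a question about isotropic lines in the hermitian $\mathbb{F}_{p^2}$-space $V' = \Lambda^\sharp/p\Lambda$, and then exploit the fact that $r(x)=1$ forces the reduction of $x$ in $V'$ to be an isotropic vector.

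The ``if'' direction is immediate from Proposition \ref{beliebigekodim}, so the work is in the ``otherwise'' part. There I would use the bijection of \ref{muff}: the irreducible components $\mathcal{N}_{\tilde\Lambda}$ of $\mathcal{N}_\textnormal{red}$ that contain $\mathcal{N}_\Lambda$ correspond to the 1-dimensional isotropic subspaces $U \subset V'$ via $\tilde\Lambda \mapsto U = p\tilde\Lambda/p\Lambda$, with $U^\perp = \tilde\Lambda^\sharp/p\Lambda$. Since $\Lambda\in\mathcal{S}(x)$, the element $x$ has a well-defined image $\bar{x}\in V'$, and the condition $\tilde\Lambda\in\mathcal{S}(x)$ (i.e.\ $x\in\tilde\Lambda^\sharp$) translates to $\bar{x}\in U^\perp$, equivalently $U\subseteq\bar{x}^\perp$. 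So the proposition is reduced to counting 1-dimensional isotropic subspaces of $\bar{x}^\perp$.

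The key observation is that $r(x)=1$ gives $h(x,x)\in p\mathbb{Z}_{p^2}$, so $\overline{h}(\bar{x},\bar{x})=0$, i.e.\ $\bar{x}$ is itself isotropic. On the other hand, the hypothesis $\Lambda_\bullet \neq \Lambda_\parallel\oplus\Lambda_\perp$ combined with the analysis of \ref{lambda} and Remark \ref{thirtysix} forces $\Lambda_\parallel = L_{(0,0)}$ (the other candidate $L_{(1,0)}$ being the only self-dual option compatible with $r=1$), so $p^{-1}x\notin\Lambda$, which means $\bar{x}\neq 0$. Hence $\mathbb{F}_{p^2}\bar{x}$ is an isotropic line contained in $\bar{x}^\perp$, giving at least one irreducible component of the desired type.

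The main obstacle, and the only step requiring more than bookkeeping, is showing that $\mathbb{F}_{p^2}\bar{x}$ is the \emph{only} isotropic line in $\bar{x}^\perp$. The dimension of $V'$ is $n-t_{\max}+2$, equal to $2$ if $n$ is odd and $3$ if $n$ is even. For $n$ odd, $\bar{x}^\perp$ is 1-dimensional and there is nothing to prove. For $n$ even, $\bar{x}^\perp$ is 2-dimensional, and I would invoke the standard fact that, for a non-degenerate hermitian form on $V'$ and an isotropic line $L\subset V'$, the restriction of $\overline{h}$ to $L^\perp$ has radical exactly $L$; applied to $L = \mathbb{F}_{p^2}\bar{x}$, this shows that $\bar{x}^\perp/\mathbb{F}_{p^2}\bar{x}$ is a 1-dimensional non-degenerate, hence anisotropic, hermitian space, so every isotropic vector of $\bar{x}^\perp$ already lies in $\mathbb{F}_{p^2}\bar{x}$, completing the count.
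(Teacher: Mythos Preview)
Your proof is correct, but it takes a genuinely different route from the paper's.

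The paper argues the ``otherwise'' part by contradiction: if two distinct maximal-type vertices $\Lambda^{(1)},\Lambda^{(2)}\in\mathcal{S}(x)^{\max}$ both contain $\Lambda$, then by the codimension-1 intersection property of Bruhat-Tits strata one has $\Lambda=\Lambda^{(1)}\cap\Lambda^{(2)}$, hence $\Lambda_\parallel=\Lambda^{(1)}_\parallel\cap\Lambda^{(2)}_\parallel$. Lemma~\ref{einslambda} forces $\Lambda^{(i)}_\parallel=L_{(1,0)}$ for both $i$, so $\Lambda_\parallel=L_{(1,0)}$ is self-dual and Remark~\ref{thirtysix} gives the splitting $\Lambda_\bullet=\Lambda_\parallel\oplus\Lambda_\perp$, contradicting the hypothesis. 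Existence of at least one component is obtained from pure-dimensionality (Prop.~\ref{dimension}).

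Your approach instead works directly in the finite hermitian space $V'=\Lambda^\sharp/p\Lambda$, translating the problem via \ref{muff} into counting isotropic lines in $\bar{x}^\perp$, and settling this by elementary hermitian geometry over $\mathbb{F}_{p^2}$. This is more self-contained: it bypasses Lemma~\ref{einslambda} entirely (whose $n$ even case is nontrivial) and also yields existence constructively, since the line $\mathbb{F}_{p^2}\bar{x}$ explicitly names the unique component. The paper's argument, on the other hand, is shorter given that Lemma~\ref{einslambda} is already in hand, and ties the result more visibly to the structure of $\mathcal{S}(x)^{\max}$ established earlier in the section.
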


\begin{proof}
The first assertion is the case $t(\Lambda)=t_{\max}-2$ of the last proposition.

For the second one, first we observe that there is (in any case) an irreducible component $\mathcal{N}_{\tilde\Lambda}$ contained in $\mathcal{Z}(x)_{\textnormal{red}}$ and containing $\mathcal{N}_\Lambda$. This is because $\mathcal{Z}(x)_\textnormal{red}$ has pure codimension 0 in $\mathcal{N}_\textnormal{red}$.
Now assume that two irreducible components of $\mathcal{Z}(x)_\textnormal{red}$ pass through $\mathcal{N}_\Lambda$, i.e.
\[ \mathcal{V}(\Lambda)\subseteq\mathcal{V}(\Lambda^{(1)})\cap\mathcal{V}(\Lambda^{(2)}) \]
for some distinct $\Lambda^{(1)}$, $\Lambda^{(2)}\in \mathcal{S}(x)^{\max}$. Then the inclusion is an equality, because $\mathcal{N}_\Lambda$ has codimension 1 in $\mathcal{N}_\textnormal{red}$ and the intersection $\mathcal{N}_{\Lambda^{(1)}}\cap\mathcal{N}_{\Lambda^{(2)}}$ is irreducible. Therefore by \cite{vw}, Thm.\ 4.1(2), we have $\Lambda=\Lambda^{(1)}\cap\Lambda^{(2)}$ and in particular
\[ \Lambda_\parallel=\Lambda^{(1)}_\parallel\cap\Lambda^{(2)}_\parallel. \]
Both irreducible components $\mathcal{N}_{\Lambda^{(1)}}$, $\mathcal{N}_{\Lambda^{(2)}}$ are in $\mathcal{Z}(x)_\textnormal{red}$, thus $\Lambda^{(i)}_\parallel=L_{(1,0)}$ for $i=1,2$ and therefore $\Lambda_\parallel=L_{(1,0)}$. By Remark \ref{thirtysix}, we have $\Lambda_\bullet=\Lambda_\parallel\oplus\Lambda_\perp$.
\end{proof}

\begin{remark}
Note that, in the situation of the proposition, the set of irreducible components of $\mathcal{N}_\textnormal{red}$ containing $\mathcal{N}_\Lambda$ is in bijection with the set of isotropic lines of the $(n-t(\Lambda))$-dimensional non-degenerate hermitian $\mathbb{F}_{p^2}$-vector space $(V':=\Lambda^\sharp/p\Lambda,\overline{h})$.

Vollaard and Wedhorn computed the cardinality of this set in \cite{vw}, Example 4.6. The answer is $p+1$ if $n$ is odd, $p^3+1$ if $n$ is even.
\end{remark}

\begin{expl}[$n=3$] \label{drei}
We now examine the first non-trivial case $n=3$. Vollaard computed the simplicial complex of $\mathfrak{B}(SU(C_\perp,ph),\mathbb{Q}_p)$ in \cite{voll}, Prop.\ 3.8. We cite the following results:
\begin{itemize}
\item $\mathcal{N}_\textnormal{red}$ is pure of dimension 1.
\item The irreducible components of $\mathcal{N}_\textnormal{red}$ are isomorphic to plane Fermat curves $(x_0^{p+1}+x_1^{p+1}+x_2^{p+1}=0)$ in $\mathbb{P}_\mathbb{F}^2$.
\item Irreducible components intersect precisely in the $\mathbb{F}_{p^2}$-valued points, of which there are $p^3+1$ on a given irreducible component.
\item In such a point, precisely $p+1$ irreducible components intersect.
\end{itemize}
For special homomorphisms $x$ of valuation 0, Terstiege shows (\cite{ter}, Prop.\ 2.1(1)) that $\mathcal{Z}(x)_\textnormal{red}$ consists of a single $\mathbb{F}$-valued point which is $\tau$-invariant. This agrees with the results of Section 6. Indeed, if $r(x)=0$, then $C_\perp$ is two-dimensional, with $\textnormal{ord}\,\textnormal{det}(C_\perp,h)$ odd. Therefore, only one type (namely type 1) of vertices occurs in $\mathfrak{B}(SU(C_\perp,ph),\mathbb{Q}_p)$ and the latter is connected, hence consists of a single point. By Thm.\ \ref{prime}(1), $\mathcal{S}(x)$ consists of a single point, which by Prop.\ \ref{bts} implies Terstiege's statement.

Let now $x$ be a special homomorphism of valuation 1. Terstiege showed in \cite{ter}, Prop.\ 2.1(3), that $\mathcal{Z}(x)_\textnormal{red}$ is connected of pure dimension 1, and that, given $\Lambda\in \mathcal{S}(x)^{\max}$, there are precisely $p+1$ out of the $p^3+1$ type 1 vertices $\tilde\Lambda\subset\Lambda$ for which all $p+1$ irreducible components passing through $\mathcal{N}_{\tilde\Lambda}$ belong to $\mathcal{Z}(x)_\textnormal{red}$. He also showed that for the other $p^3-p$ vertices $\tilde\Lambda\subset\Lambda$, the stratum $\mathcal{N}_\Lambda$ is the only irreducible component out of those passing through $\mathcal{N}_{\tilde\Lambda}$ which belongs to $\mathcal{Z}(x)_\textnormal{red}$. We will deduce this from our general results as an illustration.

In fact, connectedness has just been proven in general for special homomorphisms of valuation 1. Fixing a vertex $\Lambda$ of maximal type 3 in $\mathcal{S}(x)^{\max}$, we also saw that $\Lambda_\bullet$ decomposes as $\Lambda_\bullet=\Lambda_\parallel\oplus\Lambda_\perp$, with $\Lambda_\perp$ a vertex of type 2 in $\mathcal{L}_\perp$. The vertices $\tilde\Lambda$ of type 1 contained in $\Lambda$ fall into two categories, those for which $\tilde\Lambda_\parallel=L_{(0,0)}$ and those for which $\tilde\Lambda_\parallel=L_{(1,0)}$. By Remark \ref{thirtysix} and Prop.\ \ref{kodim1}, the latter are exactly those for which all irreducible components passing through $\mathcal{N}_{\tilde\Lambda}$ are in $\mathcal{Z}(x)_\textnormal{red}$, while in the former case, the closed stratum $\mathcal{N}_\Lambda$ is the only irreducible component of $\mathcal{Z}(x)_\textnormal{red}$ passing through $\mathcal{N}_{\tilde\Lambda}$.

We thus have to count the $\tilde\Lambda$ as above which satisfy $\tilde\Lambda_\parallel=L_{(1,0)}$. By Remark \ref{thirtysix}, those are precisely those for which $\tilde\Lambda_\bullet$ decomposes as $\tilde\Lambda_\bullet=\tilde\Lambda_\parallel\oplus\tilde\Lambda_\perp$, with $\tilde\Lambda_\perp$ a vertex of type 0 in $\mathcal{L}_\perp$, contained in the vertex $\Lambda_\perp$ of type 2. Now the argument of \ref{muff} does not in any way depend on the order of determinant. Thus we may just count isotropic subspaces in the 2-dimensional hermitian $\mathbb{F}_{p^2}$-vector space $\Gamma/\Gamma^\sharp=\Gamma/p\Gamma$, where $\Gamma:=(\Lambda_\perp)_0^{\tau=1}$. It follows from \cite{vw}, Ex.\ 4.6, that there are precisely $p+1$ of those, yielding Terstiege's result.

This argument also shows that the ``superspecial'' stratum $\mathcal{Z}(x)^{(1,0)}$ consists exactly of the points where the irreducible components of $\mathcal{Z}(x)_\textnormal{red}$ intersect.
\end{expl}

\begin{expl}[$n=4$]
The other one-dimensional case is $n=4$. For this case, the global structure of the reduced locus $\mathcal{N}_\textnormal{red}$ was computed by Vollaard and Wedhorn (\cite{vw}, Ex.\ 4.8), with the following results:
\begin{itemize}
\item $\mathcal{N}_\textnormal{red}$ is pure of dimension 1.
\item The irreducible components $\mathcal{N}_\textnormal{red}$ are isomorphic to plane Fermat curves $(x_0^{p+1}+x_1^{p+1}+x_2^{p+1}=0)$ in $\mathbb{P}_\mathbb{F}^2$.
\item Irreducible components intersect precisely in the $\mathbb{F}_{p^2}$-valued points, of which there are $p^3+1$ on a given irreducible component.
\item In such a point, precisely $p^3+1$ irreducible components intersect.
\end{itemize}
In other words, the irreducible components are isomorphic to those in the case $n=3$, but there are more of them passing through a given zero-dimensional Bruhat-Tits stratum.

We know from the results of the last section that, given a special homomorphism $x\in\mathbb{V}$ of valuation 0, we have an isomorphism of formal schemes $\mathcal{Z}(x)\cong\mathcal{N}(1,2)$. By \ref{nullbtg}, this bijection is compatible with the Bruhat-Tits stratifications of $\mathcal{Z}(x)_\textnormal{red}$ and $\mathcal{N}(1,2)_\textnormal{red}$. Thus computing the combinatorics of $\mathfrak{B}(SU(C_\perp,ph),\mathbb{Q}_p)$ shows:

\begin{prop}
Let $x\in\mathbb{V}$ be of valuation 0. Then $\mathcal{Z}(x)_\textnormal{red}$ is of pure codimension 0. Given any vertex $\Lambda$ of type 1 in $\mathcal{S}(x)$, precisely $p+1$ out of the $p^3+1$ irreducible components of $\mathcal{N}_\textnormal{red}$ passing through $\mathcal{N}_\Lambda$ belong to $\mathcal{Z}(x)_\textnormal{red}$.
\end{prop}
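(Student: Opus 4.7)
The plan is to reduce everything to the $n=3$ picture of Example \ref{drei} via Theorem \ref{prime}(1). Pure codimension $0$ is immediate from Proposition \ref{dimension}: with $n = 4$ and $m_1 = 1$ one gets $\dim \mathcal{Z}(x)_\textnormal{red} = \lfloor (4 - 1 - 1)/2 \rfloor = 1 = \dim \mathcal{N}_\textnormal{red}$. Consequently every irreducible component of $\mathcal{Z}(x)_\textnormal{red}$ is of the form $\mathcal{N}_{\tilde\Lambda}$ for some $\tilde\Lambda \in \mathcal{S}(x)^{(3)}$, and the remaining task is to count, for a fixed type $1$ vertex $\Lambda \in \mathcal{S}(x)$, how many such $\tilde\Lambda$ contain $\Lambda$. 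For the ambient total of $p^3 + 1$ I would just invoke \ref{muff}: the type $3$ vertices of $\mathcal{L}$ containing $\Lambda$ are in bijection with the isotropic lines in the $3$-dimensional non-degenerate hermitian $\mathbb{F}_{p^2}$-space $\Lambda^\sharp / p\Lambda$, matching the number quoted from \cite{vw}, Ex.\ 4.8.

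To select those lying in $\mathcal{S}(x)$ I would pass through the isomorphism of simplicial complexes
\[ \mathcal{S}(x) \ \cong\ \mathcal{L}_\perp \ \cong\ \mathfrak{B}(SU(C_\perp, ph), \mathbb{Q}_p) \]
supplied by Theorem \ref{prime}(1) and \ref{nullbtg}. Since $r(x) = 0$, the $3$-dimensional hermitian space $(C_\perp, h)$ has odd order of determinant, so this building is exactly the one governing the $n = 3$ moduli problem treated in Example \ref{drei}. Under the displayed isomorphism $\Lambda$ corresponds to some $\Lambda_\perp \in \mathcal{L}_\perp^{(1)}$, and counting type $3$ vertices of $\mathcal{S}(x)$ containing $\Lambda$ becomes counting type $3$ vertices of $\mathcal{L}_\perp$ containing $\Lambda_\perp$. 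Applying \ref{muff} inside $\mathcal{L}_\perp$ this is the number of isotropic lines in the $2$-dimensional non-degenerate hermitian $\mathbb{F}_{p^2}$-space $\Lambda_\perp^\sharp / p\Lambda_\perp$, which from the $n=3$ discussion (\cite{vw}, Ex.\ 4.6, or equivalently the last bullet of Ex.\ \ref{drei}) equals $p + 1$.

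The only point requiring any care is the compatibility of the type functions: I must know that the isomorphism of Theorem \ref{prime}(1) sends $\mathcal{S}(x)^{(t)}$ onto $\mathcal{L}_\perp^{(t)}$, so that ``maximal type'' on either side corresponds under the reduction. This is exactly what is recorded in \ref{nullbtg}, so no real obstacle arises and the $p+1$ out of $p^3+1$ assertion drops out.
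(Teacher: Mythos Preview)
Your proposal is correct and follows essentially the same route as the paper: the paper's argument is the short paragraph preceding the proposition, which invokes the identification of \ref{nullbtg} between $\mathcal{S}(x)$ and the Bruhat-Tits building for $(C_\perp,ph)$ (the $n=3$ building) and then appeals to ``computing the combinatorics of $\mathfrak{B}(SU(C_\perp,ph),\mathbb{Q}_p)$''. You have simply made that computation explicit via \ref{muff} and the isotropic-line counts from \cite{vw}, Ex.\ 4.6 and 4.8, and you derive pure codimension $0$ from Proposition \ref{dimension} rather than from the isomorphism with $\mathcal{N}(1,2)_\textnormal{red}$, but these are cosmetic differences.
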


Now assume $r(x)=v(h(x,x))=1$. We have proved that $\mathcal{Z}(x)_\textnormal{red}$ is connected. Let $\Lambda\in\mathcal{L}$ be of maximal type 3. By the same argument as in the case $n=3$, one shows that out of the $p^3+1$ vertices $\tilde\Lambda$ of type 1 contained in $\Lambda$, precisely $p+1$ have the property that $\tilde\Lambda_\bullet$ decomposes as $\tilde\Lambda_\bullet=\tilde\Lambda_\parallel\oplus\tilde\Lambda_\perp$. Again, one uses Prop.\ \ref{kodim1} and computes the ``superspecial'' stratum as for $n=3$ to get:

\begin{prop}
Let $x\in\mathbb{V}$ be of valuation 1. Then $\mathcal{Z}(x)(\mathbb{F})$ is of pure codimension 0. All lattices in the ``superspecial'' stratum $\mathcal{Z}(x)^{(1,0)}$ come from vertices in $\mathcal{L}$, i.e.\ are $\tau$-invariant. Out of the $p^3+1$ type 1 vertices contained in any given type 3 vertex $\Lambda\in \mathcal{S}(x)^{\max}$, precisely $p+1$ give rise to points in $\mathcal{Z}(x)^{(1,0)}$ via \ref{lambda}.

Let $\Lambda\in\mathcal{S}(x)^{\max}$. Let $\tilde\Lambda$ be a vertex of type 1, contained in the type 3 vertex $\Lambda$. If $\tilde\Lambda_\bullet$ belongs to $\mathcal{Z}(x)^{(1,0)}$, then all $p^3+1$ irreducible components of $\mathcal{N}_\textnormal{red}$ passing through $\mathcal{N}_{\tilde\Lambda}$ belong to $\mathcal{Z}(x)_\textnormal{red}$. If $\tilde\Lambda_\bullet$ belongs to $\mathcal{Z}(x)^{(0,0)}$, then $\mathcal{N}_\Lambda$ is the only irreducible component of $\mathcal{Z}(x)_\textnormal{red}$ passing through $\mathcal{N}_{\tilde\Lambda}$.
\end{prop}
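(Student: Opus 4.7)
The plan is to treat each of the four assertions as a direct application of the apparatus already developed, with the only computational input being the $n=4$ structure theorem recalled just above and the isotropic-subspace counts of \cite{vw}, Ex.\ 4.6.

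Pure codimension zero is immediate from Prop \ref{dimension}: with $m=1$ and $r(x)=1$ we have $m_1=0$, hence $\dim\,\mathcal{Z}(x)_\textnormal{red}=\lfloor(4-1)/2\rfloor=1=\dim\,\mathcal{N}_\textnormal{red}$. The $\tau$-invariance of the degree-zero component for lattices in $\mathcal{Z}(x)^{(1,0)}$, which is equivalent to these lattices arising from vertices in $\mathcal{L}$ via \ref{lambda}, is precisely the Lemma from \cite{kr2} proven just before the statement.

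For the count of $p+1$ among the $p^3+1$ type-$1$ vertices $\tilde\Lambda\subset\Lambda$: by Cor \ref{wurscht}, the fixed $\Lambda\in\mathcal{S}(x)^{\max}$ decomposes as $\Lambda_\bullet=L_{(1,0)}\oplus\Lambda_\perp$ with $\Lambda_\perp\in\mathcal{L}_\perp^{(2)}$, so that $\Lambda_\perp/\Lambda_\perp^\sharp$ is a $2$-dimensional non-degenerate hermitian $\mathbb{F}_{p^2}$-space. The total $p^3+1$ is the isotropic-line count in the $3$-dimensional hermitian space $\Lambda/\Lambda^\sharp$, obtained by applying \ref{muff} together with the cited $\mathbb{F}_{p^2}$-computation. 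For the restricted count, observe that a type-$1$ vertex $\tilde\Lambda\subset\Lambda$ gives a point of $\mathcal{Z}(x)^{(1,0)}$ iff $\tilde\Lambda_\parallel=L_{(1,0)}$, which by Remark \ref{thirtysix} is equivalent to $\tilde\Lambda_\bullet=L_{(1,0)}\oplus\tilde\Lambda_\perp$ for some $\tilde\Lambda_\perp\in\mathcal{L}_\perp^{(0)}$ contained in $\Lambda_\perp$; such $\tilde\Lambda_\perp$ correspond bijectively, via the analogue of \ref{muff} transcribed to $C_\perp$, to isotropic lines in $\Lambda_\perp/\Lambda_\perp^\sharp$, of which there are exactly $p+1$.

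The dichotomy on irreducible components through $\mathcal{N}_{\tilde\Lambda}$ is then a direct consequence of Props \ref{beliebigekodim} and \ref{kodim1}. If $\tilde\Lambda\in\mathcal{Z}(x)^{(1,0)}$, then $\tilde\Lambda_\parallel=L_{(1,0)}$ is self-dual, so by Remark \ref{thirtysix} one has $\tilde\Lambda_\bullet=\tilde\Lambda_\parallel\oplus\tilde\Lambda_\perp$, and Prop \ref{beliebigekodim} yields that every irreducible component of $\mathcal{N}_\textnormal{red}$ through $\mathcal{N}_{\tilde\Lambda}$ lies in $\mathcal{Z}(x)_\textnormal{red}$; their number is $p^3+1$ by the $n=4$ structure result recalled above. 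If instead $\tilde\Lambda\in\mathcal{Z}(x)^{(0,0)}$, then $\tilde\Lambda_\parallel=L_{(0,0)}$, and since $r=1$ forces $L_{(0,0)}^\vee=L_{(1,1)}\neq L_{(0,0)}$, Remark \ref{thirtysix} precludes the orthogonal decomposition; Prop \ref{kodim1} (applied at $t(\tilde\Lambda)=1=t_{\max}-2$) then exhibits exactly one irreducible component of $\mathcal{N}_\textnormal{red}$ through $\mathcal{N}_{\tilde\Lambda}$ contained in $\mathcal{Z}(x)_\textnormal{red}$, which must be the already-given $\mathcal{N}_\Lambda$. No step is genuinely hard; the one mild bookkeeping point is the translation of \ref{muff} from $\mathcal{L}$ to $\mathcal{L}_\perp$, which is formal once $\Lambda_\parallel=L_{(1,0)}$ is self-dual and the induced hermitian form on $\Lambda_\perp/\Lambda_\perp^\sharp$ is non-degenerate.
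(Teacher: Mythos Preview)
Your proposal is correct and follows essentially the same approach as the paper, which simply refers back to the $n=3$ argument (Example \ref{drei}) and invokes Prop.\ \ref{kodim1} together with the isotropic-line count from \cite{vw}, Ex.\ 4.6. Your write-up is in fact more explicit than the paper's own one-line deferral: you spell out the use of Prop.\ \ref{dimension}, the $\tau$-invariance lemma, Cor.\ \ref{wurscht}, Remark \ref{thirtysix}, and the bijection between type-$0$ subvertices of $\Lambda_\perp$ and isotropic lines in the $2$-dimensional space $\Lambda_\perp/\Lambda_\perp^\sharp$, which is exactly what the $n=3$ computation does.
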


\end{expl}

\section{Recursion step: The case $r>1$}

In this section, we prove Thm.\ \ref{prime}(3), i.e.\ compute $\mathcal{S}(x)$ in terms of $\mathcal{S}(p^{-1}x)$ under the assumption $r=(x)=v(h(x,x))>1$. As a consequence, we obtain a formula for $\mathcal{S}(x)$ for any $r\geq 0$ and a complete proof of Thm.\ \ref{holygrail} for $m=1$ by induction on $\lfloor r/2\rfloor$, starting with $\lfloor r/2\rfloor=0$, i.e.\ with the two cases treated in the previous two sections. In this section, we assume $r>1$ unless specified otherwise.

\begin{fliess}
We may reformulate Lemma \ref{ausbr} as the inclusion of subsets of $\mathcal{L}$,
\[ \{\Lambda\in\mathcal{L}^{\max}\ |\ d(\Lambda,\mathcal{S}(p^{-1}x))\leq 1\}\subseteq \mathcal{S}(x). \]
We will prove Lemma \ref{upperbound} which states that this inclusion is an equality, thus giving an explicit description of the set $\mathcal{S}(x)$ in terms of $\mathcal{S}(p^{-1}x)$ and proving Thm.\ \ref{prime}(3). (Note that we know from Prop.\ \ref{dimension} that $\mathcal{Z}(x)_\textnormal{red}$ is of pure codimension 0. Thus, all irreducible components of $\mathcal{Z}(x)_\textnormal{red}$ correspond to vertices of maximal type, i.e.\ $\mathcal{S}(x)^{\max}$ determines $\mathcal{S}(x)$.)
\end{fliess}

\begin{lem}\label{tomate}
Let $\Lambda\in \mathcal{S}(x)^\textnormal{max}$. Then either $\Lambda\in\mathcal{S}(p^{-1}x)$ or $\Lambda_\parallel=L_{(1,0)}$.
\end{lem}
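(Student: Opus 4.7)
The plan is to translate both hypotheses into constraints on the integers $(a,b)$ for which $\Lambda_\parallel = L_{(a,b)}$, then split on the parity of $n$. By \ref{lambda}, $\Lambda \in \mathcal{S}(x)$ is equivalent to $b \ge 0$, while $\Lambda \notin \mathcal{S}(p^{-1}x)$ amounts to $b < 1$, since $p^{-1}x_1 \in \Lambda_{\parallel,1} = p^{-b}Wx_1$ iff $b \ge 1$. Combined with the standing constraints $a-1 \le b \le a$ and $a+b \le r$, these force $b=0$ and $a \in \{0,1\}$, so the entire content of the lemma is to rule out $a=0$, i.e., to show that $p^{-1}x \in \Lambda$.

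For $n$ odd the argument mirrors the $n$ odd half of Lemma \ref{einslambda}: since $t_{\max}=n$, the signature condition reads $F\Lambda_1 = p\Lambda_0$. Intersecting both sides with $N_{\parallel,0}$ (and using that $F$ restricts to an isomorphism $N_{\parallel,1}\to N_{\parallel,0}$) yields $F\Lambda_{\parallel,1} = p\Lambda_{\parallel,0}$, which via $Fx_1 = x_0$ reads $p^{-b}Wx_0 = p^{-a+1}Wx_0$. Hence $b = a-1$, and $b=0$ immediately gives $a=1$.

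The case $n$ even is the one flagged in the introduction as requiring extra work. My approach is to invoke the top hermitian quotient: by \ref{muff}, the $\mathbb{F}_{p^2}$-vector space $V' = \Lambda^\sharp/p\Lambda$ carries a non-degenerate hermitian form $\overline h$ induced by $h$, and for $\Lambda \in \mathcal{L}^{\max}$ with $n$ even we have $\dim V' = n - t_{\max} = 1$. The hypothesis $x \in \Lambda^\sharp$ gives a well-defined image $\overline x \in V'$ with
\[ \overline h(\overline x, \overline x) = h(x,x) \bmod p = 0, \]
using $r \ge 2$. Since the only isotropic vector in a one-dimensional non-degenerate hermitian $\mathbb{F}_{p^2}$-space is zero (the norm $\mathbb{F}_{p^2}^\times \to \mathbb{F}_p^\times$ being nontrivial), we conclude $\overline x = 0$, i.e., $x \in p\Lambda$. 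This is precisely the assertion $p^{-1}x \in \Lambda$, which translates to $a \ge 1$; combined with $a \le 1$ this forces $a=1$, hence $\Lambda_\parallel = L_{(1,0)}$.

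The main obstacle is identifying the right tool for the $n$ even case. Once one notices that $V' = \Lambda^\sharp/p\Lambda$ is exactly one-dimensional for $\Lambda$ of maximal type, the forced isotropy of $\overline x$ immediately gives $x \in p\Lambda$, bypassing the more intricate analysis of the graph isomorphism $\gamma : \Lambda_\parallel^\vee/\Lambda_\parallel \to \Lambda_\perp^\vee/\Lambda_\perp$ used in the analogous portion of Lemma \ref{einslambda}.
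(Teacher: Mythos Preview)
Your proof is correct. The $n$ odd case matches the paper's argument, but for $n$ even you take a genuinely different and shorter route.

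The paper assumes $\Lambda_\parallel = L_{(0,0)}$ and works inside the full isocrystal $N$: it establishes a signature condition \eqref{sigzwo} on $\Lambda_\perp$ via the index diagrams, then invokes Lemma~\ref{fortytwo} to transport the $\overline{F}$-structure from $\Lambda_\parallel^\vee/\Lambda_\parallel$ to $\Lambda_\perp^\vee/\Lambda_\perp$, obtaining a nontrivial kernel for $\overline{F}_0$ on the $\perp$-side that contradicts \eqref{sigzwo}. Your argument stays entirely in $(C,h)$: once you observe that $V'=\Lambda^\sharp/p\Lambda$ is one-dimensional and non-degenerate for $\Lambda\in\mathcal{L}^{\max}$ with $n$ even, the isotropy of $\overline{x}$ (using only $r\ge 1$) forces $\overline{x}=0$, hence $x\in p\Lambda$ directly. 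This bypasses the graph isomorphism $\gamma$ of Lemma~\ref{fortytwo} entirely and in fact gives an equally short proof of the $n$ even case of Lemma~\ref{einslambda} as well. What the paper's approach buys is a more detailed picture of how the $\parallel$ and $\perp$ pieces interact via $\gamma$, which is conceptually in line with the Kudla--Rapoport stratification developed in Section~5; your approach buys brevity and avoids any passage to $N$.

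One small wording point: in your parenthetical you say ``the norm $\mathbb{F}_{p^2}^\times\to\mathbb{F}_p^\times$ being nontrivial''. What you actually use is that $N(\alpha)=\alpha^{p+1}=0$ forces $\alpha=0$ in the field $\mathbb{F}_{p^2}$; the nontriviality of the norm on units is a consequence but not the precise statement needed.
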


\begin{proof}
Let $\Lambda\in \mathcal{S}(x)^{\max}$. From the general considerations in \ref{lambda}, writing $\Lambda_\parallel=L_{(a,b)}$, we get the three possibilites $b\geq 1$ (i.e.\ $\Lambda\in\mathcal{S}(p^{-1}x)$ by Lemma \ref{leo}), $(a,b)=(1,0)$ and $(a,b)=(0,0)$. We have to show that the last case does not occur.

If $n$ is odd, one proceeds exactly as in the proof of Lemma \ref{einslambda}.
If $n$ is even, we run into the same difficulties as in that proof. We assume $\Lambda_\parallel=L_{(0,0)}$ and proceed as in the mentioned proof to get the inclusion diagram
\[
\begin{matrix}
 {}&{}&\Lambda_{\parallel,1}^\vee\oplus\Lambda_{\perp,1}^\vee\\
 {}&{}&\cup\\
 F\Lambda_1&\overset{n-1}{\subset}&\Lambda_0\\
 \cup&{}&\cup\\
 F(\Lambda_{\parallel,1}\oplus\Lambda_{\perp,1})&\overset{\alpha}{\subset}&\Lambda_{\parallel,0}\oplus\Lambda_{\perp,0},
 \end{matrix}
 \qquad\qquad
 \begin{matrix}
 \Lambda_{\parallel,0}^\vee\oplus\Lambda_{\perp,0}^\vee{}&{}\\
 \cup&{}&{}\\
 \Lambda_1&\overset{1}{\supset}&F\Lambda_0\\
 \cup&{}&\cup\\
 \Lambda_{\parallel,1}\oplus\Lambda_{\perp,1}&\overset{\beta}{\supset}&F(\Lambda_{\parallel,0}\oplus\Lambda_{\perp,0}),
 \end{matrix}
\]
where the indices for all inclusions in the columns are $r$. It follows that $\alpha=n-1$ and $\beta=1$, which implies the signature condition
\begin{equation} \label{sigzwo} p\Lambda_{\perp,0}=F\Lambda_{\perp,1}\overset{\text{\tiny{\itshape n-\normalfont 1}}}{\subset}\Lambda_{\perp,0},\quad p\Lambda_{\perp,1}\overset{\text{\tiny{\itshape n-\normalfont 1}}}{\subset}F\Lambda_{\perp,0}=\Lambda_{\perp,1}. \end{equation}
On the other hand, $\Lambda_\parallel^\vee/\Lambda_\parallel$ is the $W$-module
\[ L_{(r,r)}/L_{(0,0)}=(W/p^rW)\overline{p^{-r}x_0}\oplus(W/p^rW)\overline{p^{-r}x_1}, \]
endowed with a $\sigma$-linear endomorphism $\overline{F}$ induced by $F$. As usual, we write $\overline{F}_0$, resp.\ $\overline{F}_1$, for the restriction of $\overline{F}$ to $(\Lambda_\parallel^\vee/\Lambda_\parallel)_0=\Lambda_{\parallel,1}^\vee/\Lambda_{\parallel,0}$, resp.\ $(\Lambda_\parallel^\vee/\Lambda_\parallel)_1$. Since $FL_{(r,r)}=L_{(r,r-1)}$ and $FL_{(1,0)}=L_{(0,0)}$, we get that, while $\overline{F}_1$ is an isomorphism, $\overline{F}_0$ has the non-trivial kernel
\[ \ker(\overline{F_0})=(p^{r-1}W/p^rW)\overline{p^{-r}x_0}. \]
We have the same notions of $\overline{F}_0$ and $\overline{F}_1$ on $\Lambda_\perp^\vee/\Lambda_\perp$. By Lemma \ref{fortytwo}, we have a $W$-linear isomorphism
\[ \gamma:\Lambda_\parallel^\vee/\Lambda_\parallel\rightarrow\Lambda_\perp^\vee/\Lambda_\perp, \]
and $\gamma$ is $\overline{F}$-equivariant.

Thus, on $\Lambda_\perp^\vee/\Lambda_\perp$, we have that $\overline{F}_1$ is an isomorphism, while the kernel of $\overline{F}_0$ is a free $(p^{r-1}W/p^rW)$-module of rank 1. The preimage of this module in $\Lambda_{\perp,1}^\vee$ is a $W$-lattice $\Gamma$ in $N_{\perp,0}$ satisfying $\Lambda_{\perp,0}\overset{\text{\tiny 1}}{\subset}\Gamma$. By construction of $\Gamma$, we have
\[ F\Gamma\subseteq\Lambda_{\perp,1}. \]
But $F$ is injective, which implies that $F\Lambda_{\perp,0}$ is a proper sublattice of $F\Gamma$. This contradicts the signature condition \eqref{sigzwo}.
\end{proof}

\begin{remark}
We will now translate this result to the $(C,h)$ language. Consider a vertex $\Lambda$ of maximal type $t_{\max}$ in $\mathcal{L}$. By definition and our ``translation lemma'' \ref{leo}, the assumption of Lemma \ref{tomate} (i.e.\ $\Lambda_\parallel=L_{(a,b)}$ with $b\geq 0$) translates into the statement that $x\in\Lambda^\sharp$. The assertion of the lemma, namely that in this case $a\geq 1$, translates into 
\[ x\in\Lambda^\sharp\Rightarrow p^{-1}x\in\Lambda. \]
Of course, this is trivial for odd $n$, but not at all obvious for even $n$.
\end{remark}

\begin{lem} \label{upperbound}
Let $\Lambda\in \mathcal{S}(x)^{\max}$. Then either $\Lambda\in\mathcal{S}(p^{-1}x)$ or
\[ \mathcal{N}_\Lambda\cap\mathcal{Z}(p^{-1}x)_\textnormal{red}=\mathcal{N}_{\tilde{\Lambda}} \]
for some $\tilde\Lambda\in\mathcal{L}$ of type $t_{\max}-2$.
\end{lem}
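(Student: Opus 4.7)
The plan is to reduce via Lemma \ref{tomate} to the case $\Lambda_\parallel=L_{(1,0)}$, and then to read off the unique candidate for $\tilde\Lambda$ from the bijection of \ref{muff} applied to a distinguished isotropic line in $V:=\Lambda/\Lambda^\sharp$.

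If $\Lambda\in\mathcal{S}(p^{-1}x)$ there is nothing to show, so assume otherwise. Lemma \ref{tomate} forces $\Lambda_\parallel=L_{(1,0)}$, which by the translation Lemma \ref{leo} (together with $\tau$-invariance of the relevant lattices) means $p^{-1}x\in\Lambda$ while $p^{-1}x\notin\Lambda^\sharp$. Thus the class $\bar v:=p^{-1}x\bmod\Lambda^\sharp$ is a nonzero element of the $t_{\max}$-dimensional non-degenerate hermitian $\mathbb{F}_{p^2}$-vector space $(V,\overline{ph})$. The short computation
\[ ph(p^{-1}x,p^{-1}x)=p^{-1}h(x,x)=p^{r-1}u,\quad u\in\mathbb{Z}_{p^2}^\times, \]
together with the hypothesis $r\geq 2$, shows that $\bar v$ is isotropic. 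I would then apply the bijection of \ref{muff} in the case $t=t_{\max}$, $\tilde t=t_{\max}-2$, to define $\tilde\Lambda_0\subseteq\Lambda$ as the unique vertex of type $t_{\max}-2$ characterized by $\tilde\Lambda_0^\sharp/\Lambda^\sharp=\mathbb{F}_{p^2}\bar v$.

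The equality $\mathcal{N}_\Lambda\cap\mathcal{Z}(p^{-1}x)_\textnormal{red}=\mathcal{N}_{\tilde\Lambda_0}$ then falls out of this dictionary. On the one hand, $\bar v\in\tilde\Lambda_0^\sharp/\Lambda^\sharp$ gives $p^{-1}x\in\tilde\Lambda_0^\sharp$, so $\tilde\Lambda_0\in\mathcal{S}(p^{-1}x)$ and hence $\mathcal{N}_{\tilde\Lambda_0}\subseteq\mathcal{N}_\Lambda\cap\mathcal{Z}(p^{-1}x)_\textnormal{red}$. On the other hand, Prop.\ \ref{bts} together with the intersection property of the Bruhat-Tits stratification describe $\mathcal{N}_\Lambda\cap\mathcal{Z}(p^{-1}x)_\textnormal{red}$ as the union of the $\mathcal{N}_{\tilde\Lambda}$ for $\tilde\Lambda\in\mathcal{S}(p^{-1}x)$ with $\tilde\Lambda\subseteq\Lambda$. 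For any such $\tilde\Lambda$ the condition $p^{-1}x\in\tilde\Lambda^\sharp$ translates to $\bar v\in\tilde\Lambda^\sharp/\Lambda^\sharp$, so the isotropic subspace $\tilde\Lambda^\sharp/\Lambda^\sharp$ contains $\mathbb{F}_{p^2}\bar v=\tilde\Lambda_0^\sharp/\Lambda^\sharp$; by the inclusion-reversing nature of $\sharp$ on $\mathbb{Z}_{p^2}$-lattices in $C$ this forces $\tilde\Lambda\subseteq\tilde\Lambda_0$, whence $\mathcal{N}_{\tilde\Lambda}\subseteq\mathcal{N}_{\tilde\Lambda_0}$, and the reverse inclusion follows.

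The real content of the lemma is carried entirely by Lemma \ref{tomate}; once $\Lambda_\parallel=L_{(1,0)}$ is available, the argument is a direct dictionary translation between isotropic subspaces of $V$ and sublattices of $\Lambda$. The only verifications needed are that $\bar v$ is nonzero and isotropic, and these use exactly the two available hypotheses $\Lambda_\parallel=L_{(1,0)}$ and $r\geq 2$. I therefore expect no substantive obstacle beyond Lemma \ref{tomate} itself.
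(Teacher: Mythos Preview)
Your proof is correct and follows essentially the same approach as the paper: reduce to $p^{-1}x\in\Lambda$ via Lemma \ref{tomate}, observe that the image $\overline{p^{-1}x}$ in $(V,\overline{ph})$ is isotropic because $r\geq 2$, and read off $\tilde\Lambda$ from the bijection of \ref{muff} applied to the line it spans. Your treatment of the equality $\mathcal{N}_\Lambda\cap\mathcal{Z}(p^{-1}x)_\textnormal{red}=\mathcal{N}_{\tilde\Lambda_0}$ via Prop.\ \ref{bts} and the inclusion-reversing property of $\sharp$ is in fact more explicit than the paper's, which simply asserts the conclusion once the isotropic line is identified.
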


\begin{proof}
Let $\Lambda\in \mathcal{S}(x)^{\max}$, i.e.\ $x\in\Lambda^\sharp$. The previous lemma states that $p^{-1}x\in\Lambda$.

Let $(V=\Lambda/\Lambda^\sharp,\overline{ph})$ be the hermitian $\mathbb{F}_{p^2}$-vector space defined in \ref{muff}. There, we stated that vertices $\tilde\Lambda$ of type $t$ contained in $\Lambda$ (i.e.\ $\mathcal{N}_{\tilde\Lambda}\subseteq\mathcal{N}_\Lambda$) correspond to $((t_{\max}-t)/2)$-dimensional isotropic subspaces $U=\tilde\Lambda^\sharp/\Lambda^\sharp$ of $V$. Furthermore, $p^{-1}x\in\tilde\Lambda^\sharp$ is equivalent to $\overline{p^{-1}x}\in U$, where $\overline{p^{-1}x}$ is the image of $p^{-1}x$ in $V$.

But $\overline{p^{-1}x}$ is isotropic (since $v(h(x,x))\geq 2$), hence is either zero (which means $p^{-1}x\in\Lambda^\sharp$, i.e.\ $\Lambda\in\mathcal{S}(p^{-1}x)$) or spans an isotropic line $U$. In the latter case, the intersection $\mathcal{N}_\Lambda\cap\mathcal{Z}(x)_\textnormal{red}$ is of the form $\mathcal{N}_{\tilde{\Lambda}}$, where $\tilde{\Lambda}$ is the dual of the preimage of $U$ in $\Lambda$ and thus a vertex of type $t_{\max}-2$.
\end{proof}

\begin{fliess} \label{rekursion}
Combining this with Lemma \ref{ausbr}, we get an explicit formula describing the irreducible components of $\mathcal{Z}(x)_\textnormal{red}$ in terms of those of $\mathcal{Z}(p^{-1}x)_\textnormal{red}$, namely
\[ \mathcal{S}(x)^{\max}=\{\Lambda\in\mathcal{L}^{\max}\ |\ d(\Lambda,\mathcal{S}(p^{-1}x))\leq 1\}. \]
This proves Thm.\ \ref{prime}(3).
\end{fliess}

\begin{cor}
Let $x\in\mathbb{V}$ be of valuation $r>1$. Then
\[ \mathcal{S}(x)^{\max}=\{\Lambda\in\mathcal{L}^{\max}\ |\ d(\Lambda,\mathcal{S}(p^{-\lfloor r/2\rfloor}x))\leq\lfloor r/2\rfloor\}, \]
with $\mathcal{S}(p^{-\lfloor r/2\rfloor}x)$ being known from Thm.\ \ref{prime}(1) and (2).
\end{cor}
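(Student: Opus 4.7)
The plan is to prove the formula by induction on $k := \lfloor r/2\rfloor \geq 1$, iterating the recursive identity from \ref{rekursion}. The base case $k=1$ (i.e.\ $r \in \{2,3\}$) is exactly that identity, since $p^{-1}x$ then has valuation $0$ or $1$ and $\mathcal{S}(p^{-1}x)$ is thus described by Thm.\ \ref{prime}(1) and (2).

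For the inductive step $k \geq 2$, set $y := p^{-1}x$, so that $r(y) = r-2$ and $\lfloor r(y)/2\rfloor = k-1$. By the inductive hypothesis applied to $y$, I would have
\[ \mathcal{S}(y)^{\max} = \{\Lambda' \in \mathcal{L}^{\max} \mid d(\Lambda', \mathcal{S}(p^{-k}x)) \leq k-1\}. \]
Combining this with \ref{rekursion}, the proof reduces to the equivalence
\[ d(\Lambda, \mathcal{S}(y)) \leq 1 \ \Longleftrightarrow \ d(\Lambda, \mathcal{S}(p^{-k}x)) \leq k \]
for every $\Lambda \in \mathcal{L}^{\max}$. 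For the forward direction, if $\Lambda \in \mathcal{S}(y)$ then pure codimension zero of $\mathcal{Z}(y)_\textnormal{red}$ (Prop.\ \ref{dimension}) forces $\Lambda \in \mathcal{S}(y)^{\max}$ and the inductive hypothesis finishes the job; otherwise, pick $\Lambda' \in \mathcal{S}(y)$ with $\Lambda \cap \Lambda'$ a vertex, enlarge $\Lambda'$ to some $\tilde\Lambda' \in \mathcal{S}(y)^{\max}$, and concatenate the resulting length $\leq k-1$ chain from $\tilde\Lambda'$ to $\mathcal{S}(p^{-k}x)$ with the final edge $\tilde\Lambda' \to \Lambda$. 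For the reverse direction, given a chain $\Lambda_1,\dots,\Lambda_d = \Lambda$ of length $d \leq k$, truncating yields $\Lambda_{d-1} \in \mathcal{S}(y)^{\max}$ by the inductive hypothesis, and the final adjacency gives $d(\Lambda, \mathcal{S}(y)) \leq 1$; the edge case $d=0$ is handled by the trivial inclusion $\mathcal{S}(p^{-k}x) \subseteq \mathcal{S}(y)$, since $p^{-k}x \in \Lambda^\sharp$ implies $p^{-1}x = p^{k-1}\cdot p^{-k}x \in \Lambda^\sharp$.

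The only nonformal point, and thus the one I expect to be the main obstacle, is the enlargement step in the forward direction: one must verify that if $\Lambda \cap \Lambda'$ is a vertex and $\Lambda' \subseteq \tilde\Lambda'$, then $\Lambda \cap \tilde\Lambda'$ is again a vertex. This will follow from \cite{vw}, Thm.\ 4.1(2): the intersection $\mathcal{V}(\Lambda) \cap \mathcal{V}(\tilde\Lambda') \supseteq \mathcal{V}(\Lambda) \cap \mathcal{V}(\Lambda')$ is nonempty, so the criterion there guarantees that $\Lambda \cap \tilde\Lambda'$ is a vertex. Everything else is a direct chain-concatenation argument using the distance function of \ref{keepyourdistance}.
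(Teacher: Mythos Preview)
Your proof is correct and takes the same approach as the paper: the paper's proof is literally the two words ``Iterate the proposition,'' and your induction on $k=\lfloor r/2\rfloor$ is exactly that iteration spelled out with the chain-concatenation details made explicit. One small remark: in the case $\Lambda\in\mathcal{S}(y)$ you do not need pure-dimensionality to conclude $\Lambda\in\mathcal{S}(y)^{\max}$, since $\Lambda\in\mathcal{L}^{\max}$ is already assumed; pure codimension zero is genuinely needed only in the enlargement step $\Lambda'\subseteq\tilde\Lambda'\in\mathcal{S}(y)^{\max}$.
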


\begin{proof}
Iterate the proposition.
\end{proof}

\begin{cor}
Assumptions as in the last corollary. Then Thm.\ \ref{holygrail} holds true for $x$, i.e.\ $\mathcal{Z}(x)_\textnormal{red}$ is connected.
\end{cor}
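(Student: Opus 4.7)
The plan is to induct on $\lfloor r/2\rfloor$, with the base case $\lfloor r/2\rfloor = 0$ (that is, $r\in\{0,1\}$) already handled by Corollaries \ref{eins} and the corollary in Section 6. So assume $r>1$ and, as inductive hypothesis, that $\mathcal{Z}(p^{-1}x)_{\textnormal{red}}$ is connected. By Remark \ref{bts2}, this is equivalent to the connectedness of the simplicial complex $\mathcal{S}(p^{-1}x)$, and it suffices to establish the connectedness of $\mathcal{S}(x)$.

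By Proposition \ref{dimension}, $\mathcal{Z}(x)_{\textnormal{red}}$ has pure codimension $0$ in $\mathcal{N}_{\textnormal{red}}$, so every $\Lambda\in\mathcal{S}(x)$ is contained in some $\Lambda^{\max}\in\mathcal{S}(x)^{\max}$; hence it is enough to connect any two vertices $\Lambda,\Lambda'\in\mathcal{S}(x)^{\max}$ inside $\mathcal{S}(x)$. First observe that $\mathcal{S}(p^{-1}x)\subseteq\mathcal{S}(x)$: if $p^{-1}x\in\Lambda^\sharp$, then $x=p\cdot(p^{-1}x)\in p\Lambda^\sharp\subseteq\Lambda^\sharp$. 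Consequently, any path between two vertices of $\mathcal{S}(p^{-1}x)$ supplied by the induction hypothesis is automatically a path in $\mathcal{S}(x)$.

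Now apply Lemma \ref{upperbound} to $\Lambda$: either $\Lambda\in\mathcal{S}(p^{-1}x)$ already, or there exists a vertex $\tilde\Lambda\subset\Lambda$ of type $t_{\max}-2$ with $\tilde\Lambda\in\mathcal{S}(p^{-1}x)$ (coming from the identification of $\mathcal{N}_\Lambda\cap\mathcal{Z}(p^{-1}x)_{\textnormal{red}}$ with $\mathcal{N}_{\tilde\Lambda}$). In either case, the inclusion relation $\tilde\Lambda\subset\Lambda$ is an edge in $\mathcal{L}$ whose endpoints both lie in $\mathcal{S}(x)$, hence an edge in $\mathcal{S}(x)$. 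The same argument applied to $\Lambda'$ provides $\tilde\Lambda'\in\mathcal{S}(p^{-1}x)$ joined to $\Lambda'$ inside $\mathcal{S}(x)$. By the inductive hypothesis, $\tilde\Lambda$ and $\tilde\Lambda'$ are connected inside $\mathcal{S}(p^{-1}x)$, hence inside $\mathcal{S}(x)$, and concatenating gives a path from $\Lambda$ to $\Lambda'$ in $\mathcal{S}(x)$.

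The only non-routine input is Lemma \ref{upperbound}, but this is already proved above (the main content being Lemma \ref{tomate}, which handles the delicate case of even $n$). Everything else is a straightforward bookkeeping of the fact that the two alternatives in Lemma \ref{upperbound} each yield a neighbour relation in $\mathcal{L}$ between a vertex of $\mathcal{S}(x)^{\max}$ and a vertex of $\mathcal{S}(p^{-1}x)$, which allows the connectedness to propagate from $\mathcal{S}(p^{-1}x)$ to $\mathcal{S}(x)$.
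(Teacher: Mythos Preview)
Your proof is correct and follows essentially the same approach as the paper's: induct on $\lfloor r/2\rfloor$, reduce to vertices of maximal type via Proposition \ref{dimension}, use Lemma \ref{upperbound} to find neighbours in $\mathcal{S}(p^{-1}x)$, and concatenate with a path in $\mathcal{S}(p^{-1}x)$ supplied by the inductive hypothesis. Your explicit observation that $\mathcal{S}(p^{-1}x)\subseteq\mathcal{S}(x)$ is a helpful clarification that the paper leaves implicit.
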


\begin{proof}
It is enough to prove connectedness of $\mathcal{S}(x)$ as a simplicial subcomplex of $\mathcal{L}$. We will show that connectedness of $\mathcal{S}(p^{-1}x)$ implies connectedness of $\mathcal{S}(x)$. Since Thm.\ \ref{holygrail} is known to hold for special homomorphisms of valuation 0 or 1, this implies our claim by induction on $\lfloor r/2\rfloor$. Thus assume that $\mathcal{S}(p^{-1}x)$ is connected.

Let $\Lambda$, $\tilde\Lambda$ be in $\mathcal{S}(x)$. We have to construct a sequence
\[ \Lambda=\Lambda^{(0)},\Lambda^{(1)},,\dots,\Lambda^{(s)}=\tilde\Lambda \]
of vertices in $\mathcal{S}(x)$ with the property that for any $1\leq i\leq s$, the vertices $\Lambda^{(i-1)}$ and $\Lambda^{(i)}$ neighbour each other.

Since $\mathcal{Z}(x)_\textnormal{red}$ is pure of codimension 0 in $\mathcal{N}_\textnormal{red}$, one finds $\Lambda^{(1)}$, $\tilde\Lambda^{(1)}\in \mathcal{S}(x)^{\max}$ containing $\Lambda$, resp.\ $\tilde\Lambda$. By Lemma \ref{upperbound}, one finds $\Lambda^{(2)}$, $\tilde\Lambda^{(2)}\in\mathcal{S}(p^{-1}x)$ contained in $\Lambda^{(1)}$, resp.\ $\tilde\Lambda^{(1)}$. By assumption, $\mathcal{S}(p^{-1}x)$ is connected, hence one finds a path
\[ \Lambda^{(2)},\Lambda^{(3)},\dots,\Lambda^{(s-2)}=\tilde\Lambda^{(2)} \]
in $\mathcal{S}(p^{-1}x)$ connecting $\Lambda^{(2)}$ and $\tilde\Lambda^{(2)}$. Then set $\Lambda^{(s-1)}:=\tilde\Lambda^{(1)}$ and $\Lambda^{(s)}:=\tilde\Lambda$.
\end{proof}

\section{Intersections of special cycles}

Finally we consider the case $m>1$. Let $\underline{x}=(x_1,\dots,x_m)\in\mathbb{V}$ be according to our general assumptions and conventions of \ref{fleshwound}. The aim of this section is to show Thm.\ \ref{vielexprime} for
\[ \mathcal{S}(\underline{x})=\mathcal{S}(x_1)\cap\dots\cap\mathcal{S}(x_m) \]
and then to show Thm.\ \ref{holygrail} in full generality.

\begin{fliess}
Recall that, according to \ref{fleshwound}, we assume the $x_i$ to be perpendicular to each other w.r.t.\ $h$ and to be of nonnegative finite valuation $r_i:=r(x_i)=v(h(x_i,x_i))$. We will furthermore simplify notation by assuming that the $x_i$ are ordered increasingly by valuation. For any nonnegative integer $r\geq 0$, we fix the following notations:
\begin{align*}
m_r &:=\max\{i\ |\ r_i<r\}, \\
C_r &:=(\textnormal{span}_{\mathbb{Q}_{p^2}}(x_1,\dots,x_{m_r}))^\perp, \\
\mathcal{L}_r^{(t)} &:=\{\Lambda\subset C_r\ \textnormal{a}\ \mathbb{Z}_{p^2}\textnormal{-lattice}\ |\ p\Lambda\subseteq\Lambda^\sharp\overset{\text{\tiny{\itshape t}}}{\subseteq}\Lambda \}, \\
\mathcal{L}_r &:=\bigcup\nolimits_t \mathcal{L}_r^{(t)}.
\end{align*}
(Note that all those notions depend not only on $r$, but also on $\underline{x}$.)

$C_r$ is an $(n-m_r)$-dimensional $\mathbb{Q}_{p^2}$-vector space endowed with a non-degenerate hermitian form induced by $h$. Since $h$ has odd order of determinant on $C$, the order of determinant of $h$ on $C$ is odd if and only if the number of indices $i\leq m_r$ for  which $r_i\equiv 1\ (2)$ is even. Note that we have
\[ C_r=\textnormal{span}_{\mathbb{Q}_{p^2}}(x_{m_r+1},\dots,x_{m_{r+1}})\oplus C_{r+1}. \]
Also, $C_0=C$, whereas in the case $m=1$ and $r>r(x)$, the space $C_r$ is simply $C_\perp$ as defined in Section 5.

By Thm.\ 3.6 of \cite{voll}, for any $r$, the set $\mathcal{L}_r$ is in bijection with the set of vertices of $\mathfrak{B}(SU(C_r,ph),\mathbb{Q}_p)$. We endow $\mathcal{L}_r$ with the simplicial complex structure induced by this bijection, i.e.\ two distinct vertices $\Lambda$, $\tilde\Lambda$ neighbour each other in the simplicial complex $\mathcal{L}_r$ if and only if one of them contains the other. By Bruhat-Tits theory, the simplicial complex $\mathcal{L}_r$ is connected. Furthermore, we have distance functions on $\mathcal{L}_r$ defined by analogy with \ref{keepyourdistance}, which we denote by $d_r$. Of course, $\mathcal{L}_0=\mathcal{L}$, while for $m=1$ and $r>r(x)$, the simplicial complex $\mathcal{L}_r$ is isomorphic to the complex $\mathcal{L}_\perp$ which we used in Sections 6 and 7 via the bijection $\Lambda\mapsto\Lambda_\bullet$ introduced in \ref{lambda}.
\end{fliess}

\begin{fliess}
Let $s$ be any nonnegative integer. Let
\[ N_{(s)}:=(\textnormal{span}_{W_\mathbb{Q}}(x_1,\dots,x_{m_{2s}},F^{-1}x_1,\dots,F^{-1}x_{m_{2s}}))^\perp, \]
where the orthogonal complement is taken w.r.t.\ the symplectic form $\langle.,.\rangle$. Given $\Lambda\in\mathcal{L}_{2s}$, we construct a self-dual $W$-lattice $\Lambda_\bullet$ in $N_{(s)}$ following the procedure of \ref{lambda}. We furthermore write for $1\leq i\leq m$:
\[ L_i:=p^{-a}Wx_i\oplus p^{-b}WF^{-1}x_i, \]
where $(a,b)=(r_i/2,r_i/2)$ if $r_i$ is even, $(a,b)=\big((r_i+1)/2,(r_i-1)/2\big)$ if $r_i$ is odd. This is the choice of $a$, $b$ for which $L_i$ is self-dual w.r.t.\ $\langle.,.\rangle$. Now, for $\Lambda\in\mathcal{L}_{2s+2}$, set
\[ \Phi_s(\Lambda):=\left(\left(\bigoplus_{i=m_{2s}+1}^{m_{2s+2}}L_i\right)\oplus\Lambda_\bullet\right)_0^{\tau=1}\subset C_{2s}. \]
Checking that this defines a vertex in $\mathcal{L}_{2s}$ is straightforward, using the self-duality of $L_i$ and $\Lambda_\bullet$ w.r.t.\ $\langle .,.\rangle$. Thus we have an injective morphism of simplicial complexes
\[ \Phi_s:\mathcal{L}_{2s+2}\rightarrow\mathcal{L}_{2s}. \]
One observes that, if $\Lambda\in\mathcal{L}_{2s+2}$ is of type $t$, the type of $\Phi_s(\Lambda)$ is $t+m_{2s+2}-m_{2s+1}$.
\end{fliess}

\begin{defin} \label{allesdoof}
Let $0\leq s\leq\lfloor r_m/2\rfloor$. Set
\[ S_s=S_s(\underline{x}):=\{\Lambda\in\mathcal{L}_{2s}\ |\ p^{-s}x_i\in\Lambda^\sharp\ \forall\,i>m_{2s}\}. \]
For $1\leq s\leq\lfloor r_m/2\rfloor+1$, we set
\[ S_s'=S_s'(\underline{x}):=\{\Lambda\in\mathcal{L}_{2s}\ |\ p^{-s+1}x_i\in\Lambda^\sharp\ \forall\,i>m_{2s}\}. \]
\end{defin}

In particular, $S_0$ is the simplicial complex $\mathcal{S}(\underline{x})$ which we are originally interested in.

On the other hand, for the maximal occurring index $s=\lfloor r_m/2\rfloor+1$, the condition defining $S_s'$ is empty, i.e.\ $S_s'$ simply equals $\mathcal{L}_{2s}=\mathcal{L}_{r_m+1}$ as simplicial complexes. Thm.\ 3.6 of \cite{voll} provides us with an explicit description of $\mathcal{L}_{r_m+1}$ as the simplicial complex of a specific Bruhat-Tits building.

This case will serve as the starting point for explicitly computing $S_s$ and $S_s'$ (and thus $\mathcal{S}(\underline{x})$) by downwards induction on $s$. We now give the algorithm:

\begin{enumerate}
\addtocounter{enumi}{-1}
\item Set $s:=\lfloor r_m/2\rfloor$. Set $V_{s+1}':=\mathcal{L}_{2s+2}=\mathcal{L}_{r_m+1}$.\smallskip
\item Set $V_s:=\{\Lambda\in\mathcal{L}_{2s}\ |\ \exists\ \tilde\Lambda\in \Phi_s(V_{s+1}'):\ \Lambda\subseteq\tilde\Lambda\}$.\smallskip
\item Set $V_s':=\{\Lambda\in\mathcal{L}_{2s}\ |\ \exists\ \tilde\Lambda\in\mathcal{L}_{2s}^{\max}:\ \Lambda\subseteq\tilde\Lambda,\,d_{2s}(\tilde\Lambda,V_s)\leq 1\}$.\smallskip
\item If $s>0$, continue with Step 1, replacing $s$ by $s-1$. If $s=0$, stop.
\end{enumerate}

\begin{thm}\label{vielex}
The equalities
\[ S_s=V_s,\ S_s'=V_s' \]
hold for any $0\leq s\leq\lfloor r_m/2\rfloor$ resp.\ $1\leq s\leq\lfloor r_m/2\rfloor+1$.
\end{thm}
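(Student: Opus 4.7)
The plan is to proceed by downward induction on $s$, starting from $s = \lfloor r_m/2 \rfloor + 1$. The base case is immediate: at this value of $s$, the condition $m_{2s} \geq m$ makes the defining condition of $S_s'$ vacuous, so $S_s' = \mathcal{L}_{2s}$, which agrees with $V_s' = \mathcal{L}_{2s}$ from Step~0 of the algorithm. In the inductive step, I assume $S_{s+1}' = V_{s+1}'$ and prove first $S_s = V_s$ and then (when $s \geq 1$) $S_s' = V_s'$; these two assertions correspond respectively to Step~1 and Step~2 of the algorithm.

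For $S_s = V_s$, the inclusion $V_s \subseteq S_s$ is a direct computation using the explicit form of $\Phi_s$: if $\Lambda \subseteq \Phi_s(\tilde\Lambda)$ with $\tilde\Lambda \in S_{s+1}'$, then $p^{-s}x_i \in \Phi_s(\tilde\Lambda)^\sharp \subseteq \Lambda^\sharp$ for $m_{2s} < i \leq m_{2s+2}$ holds by construction of $L_i$ (which is self-dual w.r.t.\ $\langle.,.\rangle$ and contains the required power of $x_i$), while for $i > m_{2s+2}$ it follows from the $S_{s+1}'$-condition $p^{-s}x_i \in \tilde\Lambda^\sharp$ via the orthogonal direct sum. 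The reverse inclusion $S_s \subseteq V_s$ is the substantive step. First I would use pure-dimensionality of $S_s$ (the analog of Prop.\ \ref{dimension} inside $\mathcal{L}_{2s}$) to extend any $\Lambda \in S_s$ to a maximal-type vertex of $S_s$; since $V_s$ is stable under passing to sub-vertices, it is enough to treat $\Lambda \in S_s^{\max}$. The geometric core is then to establish that every such $\Lambda$ decomposes as $\Lambda_\bullet = \bigoplus_{i = m_{2s}+1}^{m_{2s+2}} L_i \oplus \Lambda'_\bullet$ for a unique vertex $\Lambda' \in \mathcal{L}_{2s+2}^{\max}$. I plan to establish this by splitting off one $L_i$ at a time, using the trivial argument from Section~6 for the $x_i$ with $r_i = 2s$ and Lemma~\ref{einslambda} for the $x_i$ with $r_i = 2s + 1$; orthogonality of the remaining $x_j$ ensures that they all lie in the perpendicular factor, so Remark~\ref{thirtysix} applies at each step and reduces the problem to an orthogonal complement of smaller dimension in which the same argument repeats. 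Once $\Lambda'$ is produced, $\Lambda = \Phi_s(\Lambda')$, and the condition $\Lambda' \in S_{s+1}'$ translates directly from $\Lambda \in S_s$ through the direct sum decomposition.

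For $S_s' = V_s'$ (with $s \geq 1$), the inclusion $V_s' \subseteq S_s'$ is a direct variant of Lemma~\ref{ausbr}: if $\Lambda \subseteq \tilde\Lambda \in \mathcal{L}_{2s}^{\max}$ and $\hat\Lambda \in V_s = S_s$ is at distance $\leq 1$ in the sense of \ref{keepyourdistance}, then the $S_s$-condition $p^{-s}x_i \in \hat\Lambda^\sharp$ yields $p^{-s+1}x_i \in p\hat\Lambda^\sharp \subseteq \hat\Lambda$, which after intersecting with $\tilde\Lambda$ and using the duality gives $p^{-s+1}x_i \in \tilde\Lambda^\sharp$, hence $\Lambda \in S_s'$. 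For $S_s' \subseteq V_s'$, I again invoke pure-dimensionality (now for $S_s'$, where all constraining $x_i$ have $C_{2s}$-valuation $\geq 2$, so contribute $0$ to the count $m_1$ in Prop.\ \ref{dimension}) to extend $\Lambda \in S_s'$ to some $\tilde\Lambda \in (S_s')^{\max} \subseteq \mathcal{L}_{2s}^{\max}$, and then must show $d_{2s}(\tilde\Lambda, S_s) \leq 1$. If $\tilde\Lambda \in S_s$ the distance is zero; otherwise, I would apply the appropriate analog of Lemma~\ref{tomate} to each $x_i$ with $i > m_{2s}$ (viewing $p^{-s+1}x_i$ as having $C_{2s}$-valuation $\geq 2$) to deduce $p^{-s}x_i \in \tilde\Lambda$. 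The classes $\overline{p^{-s}x_i} \in \tilde\Lambda/\tilde\Lambda^\sharp$ then span a totally isotropic subspace $U$ with respect to the induced hermitian form $\overline{ph}$, by orthogonality of the $x_j$ and the fact that $v(ph(p^{-s}x_i, p^{-s}x_i)) = r_i - 2s + 1 \geq 1$. The vertex $\hat\Lambda \subseteq \tilde\Lambda$ associated to $U$ via the bijection of \ref{muff} then satisfies $\hat\Lambda \in S_s$, and $\hat\Lambda \cap \tilde\Lambda = \hat\Lambda$ is a vertex, giving $d_{2s}(\tilde\Lambda, S_s) \leq 1$.

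The main obstacle I anticipate is a clean formulation of the simultaneous orthogonal splitting used for $S_s \subseteq V_s$. Each individual application of Lemma~\ref{einslambda} is essentially local to the splitting $C_{2s} = \mathbb{Q}_{p^2}x_i \oplus (x_i)^\perp$, but the successive splittings require tracking the signature condition and the isocrystal structure at each step, particularly in the delicate even-dimensional situation handled by the $\overline{F}_0$, $\overline{F}_1$ argument in Lemma~\ref{einslambda}. The orthogonality of the $x_j$ and the fact that each splitting reduces $C_{2s}$ to a hermitian subspace of the expected parity of discriminant should make the bookkeeping feasible, but it constitutes the technical heart of the argument; a secondary concern is matching up types to verify that $\Phi_s$ really sends $\mathcal{L}_{2s+2}^{\max}$ into $\mathcal{L}_{2s}^{\max}$ bijectively at the top stratum of $S_s$.
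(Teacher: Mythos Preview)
Your plan is essentially the paper's own argument: downward induction on $s$, the easy inclusions $V_s\subseteq S_s$ and $V_s'\subseteq S_s'$ by direct computation, the hard inclusion $S_s\subseteq V_s$ via an orthogonal splitting of maximal-type vertices (splitting off the $L_i$ with $r_i\in\{2s,2s+1\}$ one at a time using the Section~6 argument and Lemma~\ref{einslambda}), and the hard inclusion $S_s'\subseteq V_s'$ via the isotropic-subspace argument of \ref{muff} after an application of Lemma~\ref{tomate}.

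The one point where the paper's organization differs from yours is the source of pure-dimensionality for $S_s$ and $S_s'$ when $s\geq 1$. You invoke ``the analog of Prop.~\ref{dimension} inside $\mathcal{L}_{2s}$'', but Prop.~\ref{dimension} is ultimately a geometric statement about $\mathcal{Z}(\underline{x})_{\textnormal{red}}$ (reducing to \cite{kr}, Cor.~4.3), and there is no moduli space attached to $C_{2s}$ when that hermitian space has even order of determinant. The paper therefore first proves the case $s=0$ (Lemma~\ref{fluff}), where Prop.~\ref{dimension} is available, and then for general $s$ embeds $S_s$ (resp.\ $S_s'$) into $\mathcal{S}(\underline{z})\subset\mathcal{L}$ for a suitably modified tuple $\underline{z}$ via the composite $\Phi_0\circ\dots\circ\Phi_{s-1}$ (Lemmas~\ref{knuddel} and \ref{kuschel}). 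Your orthogonal-splitting argument is precisely what shows this embedding hits all of $\mathcal{S}(\underline{z})^{(t_0)}$, which then \emph{transfers} pure-dimensionality back to $S_s$. So the dependence runs the other way from what you wrote: the splitting is what establishes pure-dimensionality, not something you may assume before carrying out the splitting. Once you reorganize in this order the argument goes through, and your ``secondary concern'' about types matching under $\Phi_s$ is exactly the bookkeeping that makes this transfer work.
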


We will prove the theorem (which immediately implies Thm.\ \ref{vielexprime}) by downwards induction on $s$, using the obvious equality $S_{\lfloor r_m/2\rfloor+1}=V_{\lfloor r_m/2\rfloor+1}$ as starting point and doing the induction step by proving the implications
\[ S_{s+1}'=V_{s+1}'\Rightarrow S_s=V_s\Rightarrow S_s'=V_s'. \]
We will now sketch the proof of these two implications. First, we show Lemma \ref{flausch}, which states that $S_{s+1}'=V_{s+1}'$ implies the ``easy'' inclusion $V_s\subseteq S_s$. The following lemma \ref{fluff} states that the ``hard'' inclusion holds for $s=0$, i.e.\ that $S_1'=V_1'$ implies $S_0=V_0$. This special case has to be done first because we have no ``pure-dimensionality'' statements for any $S_s$ or $S_s'$, except for $S_0$ (Prop.\ 3.12). The proof goes by applying the results of Sections 6 and 7. The general case is then reduced to this special case in the proof of Lemma \ref{knuddel}. This concludes the proof of the first implication.

For the second implication, we first prove Lemma \ref{schmieg}, which states that $S_s=V_s$ implies $V_s'\subseteq S_s'$. This is a straightforward generalization of Lemma \ref{ausbr}. We then prove Lemma \ref{kuschel}, which states that $S_s'$ is ``pure of codimension 0'' in $\mathcal{L}_{2s}$ (this will be made precise in the statement of the lemma), by reduction to Lemma \ref{fluff}. Having proven this, we obtain the ``hard'' inclusion $S_s'\subseteq V_s'$ as a straightforward generalization of the results of Section 8. This is done in Lemma \ref{schnuff} and concludes the proof of Thm.\ \ref{vielex}.

\begin{lem} \label{flausch}
Let $0\leq s\leq\lfloor r_m/2\rfloor$ be arbitrary. Assume that $S_{s+1}'=V_{s+1}'$. Then $V_s\subseteq S_s$.
\end{lem}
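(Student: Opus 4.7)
The plan is to prove $V_s\subseteq S_s$ by a direct unwinding of the definition of $\Phi_s$, using the hypothesis $S_{s+1}'=V_{s+1}'$ only at the last step. Let $\Lambda\in V_s$. By definition, there exists $\Lambda'\in V_{s+1}'$ with $\Lambda\subseteq\tilde\Lambda:=\Phi_s(\Lambda')$. Since $\tilde\Lambda^\sharp\subseteq\Lambda^\sharp$, it suffices to show $\tilde\Lambda\in S_s$, i.e.\ that $p^{-s}x_i\in\tilde\Lambda^\sharp$ for every $i>m_{2s}$.

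First, I would make the lattice $\tilde\Lambda$ explicit. The $0$-component of $L_i=p^{-a_i}Wx_i\oplus p^{-b_i}WF^{-1}x_i$ is $p^{-a_i}Wx_i$ (since $x_i\in N_0$ and $F^{-1}x_i\in N_1$), and the $0$-component of $\Lambda'_\bullet$ is $\Lambda'_W$ by the construction in \ref{lambda}. Because $\tau$ preserves the $\mathbb{Z}/2$-grading and fixes the $\mathbb{Q}_{p^2}$-rational vectors $x_i\in C$ as well as the $\tau$-invariant lattice $\Lambda'_W$ (whose $\tau$-invariants are $\Lambda'$), taking $\tau$-invariants of the $0$-component yields
\[ \tilde\Lambda = \bigoplus_{i=m_{2s}+1}^{m_{2s+2}} p^{-a_i}\mathbb{Z}_{p^2}x_i \;\oplus\; \Lambda' \;\subset\; C_{2s}. \]
By the assumptions of \ref{fleshwound} and the definition of $C_{2s+2}$, this is an \emph{orthogonal} decomposition with respect to $h$: the $x_i$ are pairwise $h$-orthogonal, and every $x_i$ with $m_{2s}<i\leq m_{2s+2}$ is $h$-orthogonal to $C_{2s+2}\supseteq\Lambda'$.

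Next I would pass to the $\sharp$-dual. An orthogonal decomposition is preserved by $\sharp$, so
\[ \tilde\Lambda^\sharp = \bigoplus_{i=m_{2s}+1}^{m_{2s+2}} (p^{-a_i}\mathbb{Z}_{p^2}x_i)^\sharp \;\oplus\; (\Lambda')^\sharp, \]
and a one-dimensional computation gives $(p^{-a}\mathbb{Z}_{p^2}x_i)^\sharp = p^{a-r_i}\mathbb{Z}_{p^2}x_i$. For the indices in question we have $r_i\in\{2s,2s+1\}$: when $r_i=2s$ the prescription reads $(a_i,b_i)=(s,s)$, and when $r_i=2s+1$ it reads $(a_i,b_i)=(s+1,s)$, so in both cases $a_i-r_i=-s$. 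Hence $p^{-s}x_i\in\tilde\Lambda^\sharp$ for all $m_{2s}<i\leq m_{2s+2}$.

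Finally, for $i>m_{2s+2}$, I would invoke the inductive hypothesis. Since $\Lambda'\in V_{s+1}'=S_{s+1}'$, Definition \ref{allesdoof} gives $p^{-s}x_i\in(\Lambda')^\sharp\subseteq\tilde\Lambda^\sharp$. Together with the previous step this covers all $i>m_{2s}$, so $\tilde\Lambda\in S_s$ and therefore $\Lambda\in S_s$. There is no real obstacle in this argument: it is essentially bookkeeping, the only mildly delicate point being the uniform identity $a_i-r_i=-s$ across both parities of $r_i$ for $i$ in the range $m_{2s}<i\leq m_{2s+2}$. The genuine difficulty in the overall induction lies in the reverse inclusions $S_s\subseteq V_s$ and $S_s'\subseteq V_s'$, which require the pure-dimensionality input of Section~8 and are handled in the subsequent lemmas.
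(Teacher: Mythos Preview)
Your proof is correct and follows essentially the same approach as the paper: reduce to showing $\Phi_s(V_{s+1}')\subseteq S_s$, split the indices $i>m_{2s}$ into the range $m_{2s}<i\leq m_{2s+2}$ (handled by the explicit form of $L_i$) and $i>m_{2s+2}$ (handled by the hypothesis $V_{s+1}'=S_{s+1}'$). The only cosmetic difference is that the paper computes $\tilde\Lambda^\sharp$ via $(F\Lambda_1)^{\tau=1}$ and the observation $b_i=s$, whereas you compute it by taking the hermitian dual of the orthogonal decomposition of $\tilde\Lambda$ itself and noting $a_i-r_i=-s$; these are two ways of recording the same identity.
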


\begin{proof}
By definition of $V_s$, it is enough to prove $\Phi_s(V_{s+1}')\subseteq S_s$. Let $\Lambda\in\Phi_s(V_{s+1}')$. This means
\[ \Lambda_\bullet=\left(\bigoplus_{i=m_{2s}+1}^{m_{2s+2}}L_i\right)\oplus\tilde\Lambda_\bullet \]
for some $\tilde\Lambda\in V_{s+1}'$. By definition of $m_{2s}$ and $m_{2s+2}$, one has $(L_i)_1=p^{-s}WF^{-1}x_i$ for $m_{2s}<i\leq m_{2s+2}$. It follows that
\[ \Lambda^\sharp=(F\Lambda_1)^{\tau=1}=\left(\bigoplus_{i=m_{2s}+1}^{m_{2s+2}}p^{-s}\mathbb{Z}_{p^2}x_i\right)\oplus\tilde\Lambda^\sharp \]
contains $p^{-s}x_i$ for $m_{2s}<i\leq m_{2s+2}$. But the $p^{-s}x_i$ with $i>m_{s+1}$ are contained in $\tilde\Lambda^\sharp$ since $\tilde\Lambda$ is in $S_{s+1}'=V_{s+1}'$ by assumption. Thus these $p^{-s}x_i$ are also contained in $\Lambda^\sharp$.
\end{proof}

\begin{lem} \label{fluff}
Assume that $S_1'=V_1'$. Then $S_0\subseteq V_0$.
\end{lem}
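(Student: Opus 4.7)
The plan is to reduce to the case where $\Lambda$ has the maximal type occurring in $S_0$, then decompose such a $\Lambda$ orthogonally into the ``parallel'' contributions $L_i$ coming from each $x_i$ of valuation $\leq 1$ plus a residual vertex in $\mathcal{L}_2$, and finally invoke the hypothesis $S_1'=V_1'$ on this residue. For the reduction: by the pure-dimensionality of $\mathcal{Z}(\underline{x})_\textnormal{red}$ (Prop.\ \ref{dimension}) together with the description $\mathcal{Z}(\underline{x})_\textnormal{red}=\bigcup_{\Lambda'\in S_0}\mathcal{N}_{\Lambda'}$ from Prop.\ \ref{bts}, the irreducible components of $\mathcal{Z}(\underline{x})_\textnormal{red}$ all correspond to vertices in $S_0$ whose common type $t_0$ is the maximal odd integer $\leq n-m_1$, and every $\Lambda\in S_0$ is contained in such a vertex $\Lambda_0$. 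Hence it suffices to produce $\tilde\Lambda\in S_1'$ with $\Lambda_0\subseteq\Phi_0(\tilde\Lambda)$.

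I would first decompose $\Lambda_0$ orthogonally along the valuation-$0$ directions $x_1,\dots,x_{m_1}$. By the computation of \ref{lambda} for $r_i=0$ (i.e.\ the triviality of the Kudla-Rapoport stratification at the level of vertices, cf.\ Prop.\ \ref{zero} and \ref{nullbtg}), one has $(\Lambda_0)_{\parallel,x_i}=L_{(0,0)}=L_i$ for each $i\leq m_1$; combined with Remark \ref{thirtysix} and the mutual $h$-orthogonality of the $x_i$'s, this yields
\[ \Lambda_0=\bigoplus_{i\leq m_1}\mathbb{Z}_{p^2}x_i\;\oplus\;\Lambda^{(1)},\qquad \Lambda^{(1)}\in\mathcal{L}^{C^{(1)}}, \]
where $C^{(1)}:=(\textnormal{span}(x_1,\dots,x_{m_1}))^\perp\subset C$ still has odd-valuation determinant. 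Since $\mathbb{Z}_{p^2}x_i$ is $h$-self-dual when $r_i=0$, the vertex $\Lambda^{(1)}$ still has type $t_0$, and a parity check on $n-m_1$ identifies $t_0$ with the maximal type of $\mathcal{L}^{C^{(1)}}$.

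Next, I would extract the valuation-$1$ directions $x_{m_1+1},\dots,x_{m_2}$ one at a time, at each step applying Lemma \ref{einslambda} inside the current residual hermitian space. The proof of that lemma only distinguishes between $n^{\ast}-t_{\max}^{\ast}=0$ (handled by the signature identity $p\Lambda^{\ast}=(\Lambda^{\ast})^\sharp$) and $n^{\ast}-t_{\max}^{\ast}=1$ (the graph-of-$\gamma$ argument using $\overline{F}$-equivariance), so it applies verbatim in any hermitian space and for any determinant parity as long as the vertex in question is of maximal type. The same parity check as in the previous step shows that maximal type is inherited after each removal, even though the determinant parity of the ambient space flips each time. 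After $m_2-m_1$ iterations I would arrive at
\[ \Lambda_0=\bigoplus_{i\leq m_1}\mathbb{Z}_{p^2}x_i\;\oplus\;\bigoplus_{m_1<i\leq m_2}p^{-1}\mathbb{Z}_{p^2}x_i\;\oplus\;\Lambda^{(2)}, \]
with $\Lambda^{(2)}$ a maximal-type vertex in $\mathcal{L}_2$.

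To conclude, for any $i>m_2$ the $h$-orthogonality of $x_i$ to $x_1,\dots,x_{m_2}$ gives $h(x_i,\Lambda_0)=h(x_i,\Lambda^{(2)})$, so $x_i\in\Lambda_0^\sharp$ forces $x_i\in(\Lambda^{(2)})^\sharp$; hence $\Lambda^{(2)}\in S_1'=V_1'$ by hypothesis. Unwinding the definition of $\Phi_0$, the orthogonal decomposition above reads precisely $\Lambda_0=\Phi_0(\Lambda^{(2)})$, whence $\Lambda\subseteq\Lambda_0\in\Phi_0(V_1')$ and $\Lambda\in V_0$. The main delicate point is the iterated application of Lemma \ref{einslambda}: the determinant parity of the ambient space flips each time a valuation-$1$ direction is removed, so one has to verify carefully that the two subcases of its proof still go through in the new parity, mirroring the even-$n$ analysis of Section 7 that gives $\mathcal{S}(x)^{\max}\ni\Lambda\Rightarrow\Lambda_\parallel=L_{(1,0)}$.
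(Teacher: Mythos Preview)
Your proposal is correct and follows essentially the same approach as the paper: reduce via pure-dimensionality (Prop.\ \ref{dimension}) to a vertex $\Lambda_0$ of type $t_0$, peel off the valuation-$0$ directions using \ref{nullbtg}/Remark \ref{thirtysix}, then peel off the valuation-$1$ directions by iterating Lemma \ref{einslambda} in the successive orthogonal complements, and finally identify the residual $\Lambda^{(2)}\in S_1'=V_1'$ with a preimage under $\Phi_0$. Your explicit remark that the two subcases of Lemma \ref{einslambda} depend only on whether $n^\ast-t_{\max}^\ast$ is $0$ or $1$ (and hence carry over regardless of determinant parity) is a useful clarification that the paper leaves implicit.
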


\begin{proof}
Recall from the definitions above that
\[ S_0=\{\Lambda\in\mathcal{L}\ |\ x_i\in\Lambda^\sharp\ \forall\,1\leq i\leq m\}=\mathcal{S}(\underline{x}). \]
By Prop.\ \ref{dimension}, $\mathcal{Z}(\underline{x})_\textnormal{red}$ is pure of dimension $\lfloor (n-m_1-1)/2\rfloor$, i.e.\ every vertex in $\mathcal{S}(\underline{x})$ is contained in some vertex in $\mathcal{S}(\underline{x})^{(t_0)}:=T(\underline{x})\cap\mathcal{L}^{t_0}$, where $t_0$ is the largest odd integer $\leq n-m_1$. One checks that $t_0-m_2+m_1$ is the maximal type of vertices in $\mathcal{L}_2$.

Let $\Lambda\in\mathcal{S}(\underline{x})$. Using the results on a single special homomorphism of valuation 0, in particular \ref{nullbtg}, we get for any $t\leq m_1$:
\[ (\Lambda_\bullet)\cap(W_\mathbb{Q}x_i\oplus W_\mathbb{Q}F^{-1}x_i)=Wx_i\oplus WF^{-1}x_i=L_i, \]
which is self-dual w.r.t.\ $\langle.,.\rangle$. By Remark \ref{thirtysix}, this implies that $\Lambda_\bullet$ decomposes orthogonally as $\Lambda_\bullet=\left(\bigoplus_{i=1}^{m_1}L_i\right)\oplus\Lambda_\bullet^{(1)}$ for some $\Lambda^{(1)}\in\mathcal{L}_1$ of type $t(\Lambda)$.

Now assume $t(\Lambda)=t_0$. Then $\Lambda^{(1)}$ is a vertex of type $t_0$, i.e.\ of maximal type, in $\mathcal{L}_1$. Using Lemma \ref{einslambda} on special homomorphisms of valuation 1, we have for $m_1<i\leq m_2$:
\[ (\Lambda_\bullet^{(1)})\cap(W_\mathbb{Q}x_i\oplus W_\mathbb{Q}F^{-1}x_i)=p^{-1}Wx_i\oplus WF^{-1}x_i=L_i, \]
which is again self-dual w.r.t.\ $\langle.,.\rangle$. Again, we use Remark \ref{thirtysix} to get the orthogonal decomposition
\[ \Lambda_\bullet=\left(\bigoplus_{i=1}^{m_1}L_i\right)\oplus\Lambda_\bullet^{(1)}=\left(\bigoplus_{i=1}^{m_2}L_i\right)\oplus\Lambda_\bullet^{(2)} \]
for some $\Lambda^{(2)}\in\mathcal{L}_2$ of type $t_0-m_2+m_1$. But the right hand side is precisely $\left(\Phi_1(\Lambda^{(2)})\right)_\bullet$, and $\Lambda^{(2)}$ is in $T_1'$ because $\Lambda$ is in $T_0$. Thus by assumption, $\Lambda\in\Phi_1(V_1')$, which is a subset of $V_0$ by definition. This proves the lemma.
\end{proof}

\begin{lem} \label{knuddel}
Let $0\leq s\leq\lfloor r_m/2\rfloor$ be arbitrary. Assume that $S_{s+1}'=V_{s+1}'$. Then $S_s=V_s$.
\end{lem}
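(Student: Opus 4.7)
The inclusion $V_s \subseteq S_s$ is precisely the content of Lemma \ref{flausch}. To establish the reverse inclusion $S_s \subseteq V_s$, the plan is to reduce to the base case $s = 0$ already handled by Lemma \ref{fluff}, via a rescaling of the tuple.

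Concretely, I would consider the shifted tuple $\underline{y} := (y_1, \dots, y_{m'})$ with $y_j := p^{-s} x_{m_{2s}+j}$ for $j = 1, \dots, m' := m - m_{2s}$, viewed as pairwise $h$-orthogonal elements of the hermitian space $(C_{2s}, h)$. Their valuations $r(y_j) = r_{m_{2s}+j} - 2s$ are nonnegative and ordered increasingly, so $\underline{y}$ satisfies the conventions of \ref{fleshwound} relative to $(C_{2s}, h)$. Under the tautological identification $\mathcal{L}_{2s}(\underline{x}) = \mathcal{L}_0(\underline{y})$ of lattice complexes, a direct check, using $L_{y_j} = L_{m_{2s}+j}$ and noting that the $y_j$ of valuation $0$ (resp.\ $1$) correspond exactly to the $x_i$ with $m_{2s} < i \leq m_{2s+1}$ (resp.\ $m_{2s+1} < i \leq m_{2s+2}$), yields the dictionary
\[ S_s(\underline{x}) = S_0(\underline{y}), \quad S_{s+1}'(\underline{x}) = S_1'(\underline{y}), \quad V_s(\underline{x}) = V_0(\underline{y}), \quad V_{s+1}'(\underline{x}) = V_1'(\underline{y}), \]
with the map $\Phi_s$ for $\underline{x}$ corresponding precisely to $\Phi_0$ for $\underline{y}$. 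Under this dictionary the hypothesis $S_{s+1}'(\underline{x}) = V_{s+1}'(\underline{x})$ reads $S_1'(\underline{y}) = V_1'(\underline{y})$, and the desired conclusion $S_s(\underline{x}) \subseteq V_s(\underline{x})$ reads $S_0(\underline{y}) \subseteq V_0(\underline{y})$, which is exactly the content of Lemma \ref{fluff} applied to $\underline{y}$.

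The main obstacle is that Lemma \ref{fluff} is stated for tuples in the global hermitian space $(C, h)$ attached to the moduli problem $\mathcal{N}(1, n-1)$, for which $\det h$ has odd order, whereas the parity of the order of $\det h$ on $C_{2s}$ depends on $\sum_{i \leq m_{2s}} r_i \pmod 2$ and need not be odd. To circumvent this, I would verify that the proof of Lemma \ref{fluff} is entirely lattice-theoretic: the orthogonal decomposition results of \ref{nullbtg} (for valuation-$0$ components) and of Lemma \ref{einslambda} (for valuation-$1$ components at maximal type) are valid in any non-degenerate hermitian $\mathbb{Q}_{p^2}$-vector space, so the only moduli-theoretic input is the pure-dimensionality of $S_0(\underline{y})$ in $\mathcal{L}_0(\underline{y})$ used to extend a given vertex to one of maximal type before invoking Lemma \ref{einslambda}. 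When $(C_{2s}, h)$ has odd-parity determinant, this pure-dimensionality follows from Prop.\ \ref{dimension} applied to the moduli problem $\mathcal{N}(1, n - m_{2s} - 1)$, whose associated hermitian space is precisely $(C_{2s}, h)$. In the remaining even-parity case, I would enlarge $(C_{2s}, h)$ by adjoining an auxiliary anisotropic vector of appropriate valuation to obtain an odd-parity hermitian space of dimension $n - m_{2s} + 1$, apply Prop.\ \ref{dimension} there, and descend to $(C_{2s}, h)$; since the auxiliary vector does not interact with any of the $y_j$, the decomposition steps are unaffected. With this abstract version of Lemma \ref{fluff} in hand, the reduction above closes the argument.
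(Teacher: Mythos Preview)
Your reduction is correct and is morally the same as the paper's: both arguments recast $S_s\subseteq V_s$ as an instance of Lemma~\ref{fluff} for a rescaled tuple, and both rely on the lattice-theoretic validity of \ref{nullbtg} and Lemma~\ref{einslambda} in arbitrary parity. The difference lies in how the parity obstacle (pure-dimensionality when $\mathrm{ord}\,\det(C_{2s},h)$ is even) is dispatched.

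You work intrinsically in $C_{2s}$ with the tuple $\underline{y}=(p^{-s}x_{m_{2s}+1},\dots,p^{-s}x_m)$ and, in the even-parity case, propose an abstract enlargement by a single auxiliary vector $w$. The paper instead chooses a concrete enlargement: it re-embeds $S_s$ into $\mathcal{L}$ via the already-constructed map $\Phi_0\circ\cdots\circ\Phi_{s-1}$, using the tuple $\underline{z}$ that carries along the rescaled low-index entries $p^{-\lfloor r_i/2\rfloor}x_i$ for $i\le m_{2s+2}$. This keeps everything inside the original odd-determinant space $(C,h)$, so Prop.~\ref{dimension} applies verbatim and no parity case-split is needed. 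Your abstract enlargement is really one step of this same embedding; note, though, that the ``descend'' you sketch is not free: showing that a vertex of type $t_0$ in $\mathcal{S}(\tilde{\underline{y}})$ actually splits off the $w$-direction already requires running the valuation-$0$ and valuation-$1$ decomposition (i.e.\ the proof of Lemma~\ref{fluff}) once in $\tilde C$, so the bookkeeping is slightly heavier than your last paragraph suggests. Both routes work; the paper's is more economical because it reuses the existing maps $\Phi_i$ rather than introducing an extraneous auxiliary direction.
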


\begin{proof}
We observe that $S_{s+1}'=\{\Lambda\in\mathcal{L}_{2s+2}\ |\ p^{-s}x_i\in\Lambda^\sharp\ \forall\,i>m_{2s+2}\}$ is the simplicial complex $S_1'(\underline{z})$ obtained by applying Def.\ \ref{allesdoof} to
\[ \underline{z}:=(p^{-\lfloor\frac{r_1}{2}\rfloor}x_1,\dots,p^{-\lfloor\frac{r_{m_{2s+2}}}{2}\rfloor}x_{m_{2s+2}},p^{-s}x_{m_{2s+2}+1},\dots,p^{-s}x_m). \]
The morphism $\Phi_0\circ\dots\circ\Phi_{s-1}:\mathcal{L}_{2s}\rightarrow\mathcal{L}$ of simplicial complexes is injective and, by definition of $\underline{z}$, maps $S_s$ into $\mathcal{S}(\underline{z})$, sending vertices of type $t$ in $\mathcal{L}_{2s}$ to vertices of type $t+\sum_{i=1}^{2s}(-1)^im_i$ in $\mathcal{L}$.

On the other hand, by pure-dimensionality (Prop.\ \ref{dimension}), there is an odd integer $t_0$ such that any vertex in $\mathcal{S}(\underline{z})$ is contained in some vertex in $\mathcal{S}(\underline{z})^{(t_0)}$. By the proof of Lemma \ref{fluff}, we have
\[ \mathcal{S}(\underline{z})^{(t_0)}=(\Phi_0\circ\dots\circ\Phi_s)\left(S_{s+1}'\cap\mathcal{L}_{2s+2}^{(t_0-\sum_{i=1}^{2s+2}(-1)^im_i)}\right). \]
Since $\Phi_0\circ\dots\circ\Phi_s:S_{s+1}'\rightarrow\mathcal{S}(\underline{z})$ factors over $S_s$ and $\Phi_0\circ\dots\circ\Phi_{s-1}:\mathcal{L}_{2s}\rightarrow\mathcal{L}$ is injective, it follows that we have the expected ``pure-dimensionality statement'' for $S_s$, namely that every vertex in $S_s$ is contained in a vertex in $S_s\cap\mathcal{L}_{2s}^{(t_0-\sum_{i=1}^{2s}(-1)^im_i)}$, and that
\[ S_s\cap\mathcal{L}_{2s}^{(t_0-\sum_{i=1}^{2s}(-1)^im_i)}=\Phi_s\left(\mathcal{S}_{s+1}'\cap\mathcal{L}_{2s+2}^{(m_{2s-2}-m_{2s-1})}\right). \]
Therefore, $S_{s+1}'=V_{s+1}'$ implies $S_s=V_s$.
\end{proof}

\begin{lem} \label{schmieg}
Let $1\leq s\leq\lfloor r_m/2\rfloor$. Assume that $S_s=V_s$. Then $V_s'\subseteq S_s'$.
\end{lem}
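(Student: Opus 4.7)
The plan is to imitate the ``one step of slack'' argument used in Lemma~\ref{ausbr}, now transplanted into the simplicial complex $\mathcal{L}_{2s}$. Let $\Lambda \in V_s'$ and pick a maximal-type vertex $\tilde\Lambda \in \mathcal{L}_{2s}^{\max}$ with $\Lambda \subseteq \tilde\Lambda$ and $d_{2s}(\tilde\Lambda, V_s) \leq 1$, as supplied by the definition of $V_s'$. Unwinding the distance condition (see \ref{keepyourdistance}), either $\tilde\Lambda$ itself lies in $V_s$, or there is a vertex $\Lambda' \in V_s$ whose intersection $\tilde\Lambda' := \tilde\Lambda \cap \Lambda'$ is again a vertex in $\mathcal{L}_{2s}$. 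Setting $\Lambda' := \tilde\Lambda' := \tilde\Lambda$ in the first case, in both cases I obtain a vertex $\tilde\Lambda' \in \mathcal{L}_{2s}$ with $\tilde\Lambda' \subseteq \tilde\Lambda$ and $\tilde\Lambda' \subseteq \Lambda'$, where by the standing hypothesis $S_s = V_s$ the lattice $\Lambda'$ belongs to $S_s$.

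Now I fix an index $i > m_{2s}$ and chase lattices. Since $\Lambda' \in S_s$, one has $p^{-s}x_i \in (\Lambda')^\sharp$ by definition. The inclusion $\tilde\Lambda' \subseteq \Lambda'$ dualizes to $(\Lambda')^\sharp \subseteq (\tilde\Lambda')^\sharp$, and the vertex chain condition $(\tilde\Lambda')^\sharp \subseteq \tilde\Lambda'$ gives $p^{-s}x_i \in \tilde\Lambda' \subseteq \tilde\Lambda$. Multiplying by $p$ and using the vertex chain condition $p\tilde\Lambda \subseteq \tilde\Lambda^\sharp$, I conclude $p^{-s+1}x_i \in \tilde\Lambda^\sharp \subseteq \Lambda^\sharp$, where the last inclusion comes from $\Lambda \subseteq \tilde\Lambda$. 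As $i > m_{2s}$ was arbitrary, this is precisely the defining condition for $\Lambda \in S_s'$.

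I foresee no real obstacle. The whole argument hinges on the same trade used in Lemma~\ref{ausbr}: one multiplies by a power of $p$ to absorb the loss of self-duality caused by enlarging the lattice. The only genuinely new input is the correct reading of the condition $d_{2s}(\tilde\Lambda, V_s) \leq 1$, which by \ref{keepyourdistance} supplies the intermediate vertex $\tilde\Lambda'$ needed to bridge between a lattice $\Lambda'$ carrying the $V_s$-information and the enveloping maximal-type vertex $\tilde\Lambda$ sitting above $\Lambda$.
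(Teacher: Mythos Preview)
Your proof is correct and follows essentially the same route as the paper's. The only cosmetic difference is in how the condition $d_{2s}(\tilde\Lambda,V_s)\leq 1$ is unpacked: you carry along the intermediate vertex $\tilde\Lambda'=\tilde\Lambda\cap\Lambda'$ explicitly, whereas the paper simply asserts the existence of a vertex of $S_s$ contained in $\tilde\Lambda$ (implicitly using that $S_s$ is downward closed under inclusion of vertices, so $\tilde\Lambda\cap\Lambda'\in S_s$ whenever $\Lambda'\in S_s$). The lattice chase $p^{-s}x_i\in(\Lambda')^\sharp\subseteq(\tilde\Lambda')^\sharp\subseteq\tilde\Lambda'\subseteq\tilde\Lambda$, hence $p^{-s+1}x_i\in p\tilde\Lambda\subseteq\tilde\Lambda^\sharp\subseteq\Lambda^\sharp$, is exactly the paper's argument.
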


\begin{proof}
First, note that by definition of $V_s'$, any vertex in $V_s'$ is contained in some vertex $\Lambda$ of maximal type in $\mathcal{L}_{2s}$ for which $d_{2s}(\Lambda,V_s)\leq 1$. Therefore it suffices to show that any $\Lambda\in\mathcal{L}_{2s}^{\max}$ for which $d_{2s}(\Lambda,V_s)\leq 1$ is in $S_s'$. Let $\Lambda$ be such a vertex. By assumption $V_s=S_s$, thus $\Lambda$ satisfies $d_{2s}(\Lambda,S_s)\leq 1$, which means that there is some vertex $\tilde\Lambda\in S_s$ contained in $\Lambda$. Thus
\[ p^{-s+1}x_i\in p\tilde\Lambda^\sharp\subseteq p\tilde\Lambda\subseteq p\Lambda\subseteq\Lambda^\sharp\quad\forall\ i>m_{2s}. \]
\end{proof}

\begin{lem} \label{kuschel}
Let $1\leq s\leq\lfloor r_m/2\rfloor$. Assume that $S_s=V_s$. Then $S_s'$ is ``pure of codimension 0'' in $\mathcal{L}_{2s}$, that is, every vertex in $S_s'$ is contained in some vertex in $S_s'\cap\mathcal{L}_{2s}^{\max}$.
\end{lem}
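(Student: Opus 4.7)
The plan is to reduce the claim to the pure-dimensionality of $S_0$ provided by Prop.\ \ref{dimension} and Lemma \ref{fluff}, by the same rescaling trick that was used for $S_s$ in Lemma \ref{knuddel}. Introduce the auxiliary tuple $\underline{w}=(w_1,\dots,w_m)\in\mathbb{V}^m$ defined by $w_i=p^{-\lfloor r_i/2\rfloor}x_i$ for $i\leq m_{2s}$ and $w_i=p^{-s+1}x_i$ for $i>m_{2s}$, so that $r(w_i)\in\{0,1\}$ for $i\leq m_{2s}$ and $r(w_i)=r_i-2(s-1)\geq 2$ for $i>m_{2s}$, which is enough for $\underline{w}$ to satisfy the conventions of \ref{fleshwound} (after possibly reordering). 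Direct bookkeeping gives the identifications $m_2(\underline{w})=m_{2s}(\underline{x})$, $C_2(\underline{w})=C_{2s}(\underline{x})$, $L_i^{\underline{w}}=L_i$ for $i\leq m_{2s}$, and $\Phi_0^{\underline{w}}=\Phi_0\circ\cdots\circ\Phi_{s-1}:\mathcal{L}_{2s}(\underline{x})\to\mathcal{L}$, so that $S_1'(\underline{w})=S_s'(\underline{x})$ as subcomplexes of $\mathcal{L}_2(\underline{w})=\mathcal{L}_{2s}(\underline{x})$.

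Next I would apply Prop.\ \ref{dimension} to $\underline{w}$: the subcomplex $\mathcal{S}(\underline{w})=S_0(\underline{w})\subseteq\mathcal{L}$ is pure of some maximal type $t_0(\underline{w})$. The decomposition argument from the proof of Lemma \ref{fluff}, which relies only on the base-case results of Sections~6 and~7 (Prop.\ \ref{zero} for valuation~$0$ and Lemma \ref{einslambda} for valuation~$1$) and is in particular independent of any inductive hypothesis, applies verbatim to $\underline{w}$ and shows that every vertex $\tilde\Lambda\in\mathcal{S}(\underline{w})^{(t_0(\underline{w}))}$ is of the form $\tilde\Lambda=\Phi_0^{\underline{w}}(\Lambda^\star)$ for some $\Lambda^\star\in\mathcal{L}_{2s}(\underline{x})^{\max}$.

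Now, given $\Lambda\in S_s'(\underline{x})=S_1'(\underline{w})$, I would first check that $\Phi_0^{\underline{w}}(\Lambda)\in\mathcal{S}(\underline{w})$ by using the orthogonal description
\[
\Phi_0^{\underline{w}}(\Lambda)^\sharp \;=\; \bigoplus_{i\leq m_{2s}}\mathbb{Z}_{p^2}w_i \;\oplus\; \Lambda^\sharp
\]
inside $C$: the first summand ensures $w_i\in\Phi_0^{\underline{w}}(\Lambda)^\sharp$ for $i\leq m_{2s}$ thanks to the self-duality of each $L_i$, and the second gives it for $i>m_{2s}$ by the defining condition $w_i=p^{-s+1}x_i\in\Lambda^\sharp$. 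Pure-dimensionality then produces $\tilde\Lambda=\Phi_0^{\underline{w}}(\Lambda^\star)\in\mathcal{S}(\underline{w})^{(t_0(\underline{w}))}$ with $\Phi_0^{\underline{w}}(\Lambda)\subseteq\tilde\Lambda$, and intersecting both inclusions $\Phi_0^{\underline{w}}(\Lambda)\subseteq\Phi_0^{\underline{w}}(\Lambda^\star)$ and $w_i\in\Phi_0^{\underline{w}}(\Lambda^\star)^\sharp$ (for $i>m_{2s}$) with $C_{2s}(\underline{x})$ immediately yields $\Lambda\subseteq\Lambda^\star$ and $w_i\in(\Lambda^\star)^\sharp$. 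Therefore $\Lambda^\star\in S_1'(\underline{w})\cap\mathcal{L}_{2s}(\underline{x})^{\max}=S_s'(\underline{x})\cap\mathcal{L}_{2s}(\underline{x})^{\max}$, which is exactly the pure-codimension-$0$ conclusion.

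The main obstacle is essentially bookkeeping: one must align the indices $m_i$, the self-dual lattices $L_i$, and the composites of the maps $\Phi_\bullet$ between the $\underline{x}$- and $\underline{w}$-pictures, and verify the orthogonal identities $\Phi_0^{\underline{w}}(\Lambda)\cap C_{2s}(\underline{x})=\Lambda$ and $\Phi_0^{\underline{w}}(\Lambda)^\sharp\cap C_{2s}(\underline{x})=\Lambda^\sharp$ on which the whole descent rests. Notably, the hypothesis $S_s=V_s$ plays no role in this approach; it is presumably included in the statement only to match the downward-inductive flow of the section.
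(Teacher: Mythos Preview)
Your proposal is correct and follows essentially the same approach as the paper: both introduce the identical auxiliary tuple (the paper calls it $\underline{z}$, you call it $\underline{w}$), identify $S_s'(\underline{x})=S_1'(\underline{w})$, and then transport the pure-dimensionality of $\mathcal{S}(\underline{w})$ from Prop.~\ref{dimension} down to $S_s'$ via the injection $\Phi_0\circ\dots\circ\Phi_{s-1}$, using the decomposition argument of Lemma~\ref{fluff} to show this injection hits every maximal-type vertex of $\mathcal{S}(\underline{w})$. Your observation that the hypothesis $S_s=V_s$ is never used is also correct and applies equally to the paper's proof; it is indeed present only to fit the downward-inductive narrative of the section.
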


\begin{proof}
The rough idea is, as in the proof of Lemma \ref{knuddel}, to use Lemma \ref{fluff} on some variation of $\underline{x}$ to reduce to the known case of $S_0$. In our situation, we have $S_s'=S_1'(\underline{z})$, where
\[ \underline{z}=(p^{-\lfloor\frac{r_1}{2}\rfloor}x_1,\dots,p^{-\lfloor\frac{r_{m_{2s}}}{2}\rfloor}x_{m_{2s}},p^{-s+1}x_{m_{2s}+1},\dots,p^{-s+1}x_m). \]
By Lemma \ref{dimension}, every vertex in $\mathcal{S}(\underline{z})$ is contained in some vertex of type $t_0$, where $t_0-\sum_{i=1}^{2s}(-1)^im_i$ is the maximal type of vertices in $\mathcal{L}_{2s}$. But we can describe $S_s'$ in terms of $S_0(\underline{z})=\mathcal{S}(\underline{z})$ in the same way as we did in the proof of Lemma \ref{knuddel}. The result is an injection of simplicial complexes
\[ \Phi_0\circ\dots\circ\Phi_{s-1}:S_s'\rightarrow S_0(\overline{z}) \]
sending vertices in $\mathcal{L}_{2s}^{(t)}$ to vertices in $\mathcal{L}^{(t+\sum_{i=1}^{2s}(-1)^im_i)}$, and surjective in maximal type. Therefore, the ``pure-dimensionality'' statement Prop.\ \ref{dimension} for $\mathcal{S}(\underline{z})$ implies the claimed ``pure-dimensionality'' statement for $S_s'$.
\end{proof}

\begin{lem} \label{schnuff}
Let $1\leq s\leq\lfloor r_m/2\rfloor$. Assume that $S_s=V_s$. Let $\Lambda\in S_s'$ be of maximal type in $\mathcal{L}_{2s}$. Then $\Lambda\in V_s'$; that is, there is some vertex $\tilde\Lambda\in V_s=S_s$ contained in $\Lambda$.
\end{lem}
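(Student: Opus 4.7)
The plan is to adapt the two-step proof of Lemma \ref{upperbound} to the multi-vector setting inside $\mathcal{L}_{2s}$. Given $\Lambda \in S_s' \cap \mathcal{L}_{2s}^{\max}$, I will first establish that $p^{-s}x_i \in \Lambda$ for every $i > m_{2s}$, and then construct the desired $\tilde\Lambda \in S_s$ with $\tilde\Lambda \subseteq \Lambda$ as the vertex corresponding, via the natural analog of \ref{muff} for $\mathcal{L}_{2s}$, to a totally isotropic subspace of $V := \Lambda/\Lambda^\sharp$.

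For the first step, fix $i > m_{2s}$ and set $y := p^{-s+1}x_i$, so that $v(h(y,y)) = r_i - 2s + 2 \geq 2$ and $y \in \Lambda^\sharp$ by the hypothesis $\Lambda \in S_s'$. Working inside the ambient isocrystal $N_{(s)}$ (which plays the role of the full $N$ from Section 5), decompose $N_{(s)} = N_\parallel \oplus N_\perp$ with respect to $y$ as in Section 5. Writing $\Lambda_\parallel = L_{(a,b)}$, the conditions $y \in \Lambda^\sharp$, $a-1 \leq b \leq a$, and $a+b \leq r(y)$ leave only the possibilities $(a,b) = (0,0)$ or $a \geq 1$; the latter immediately yields $p^{-1}y = p^{-s}x_i \in \Lambda$, while $(a,b) = (0,0)$ is excluded by the signature-and-graph argument of Lemma \ref{tomate}, applied to $\Lambda_\bullet \subseteq N_{(s)}$. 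That argument now splits on whether the maximal type in $\mathcal{L}_{2s}$ equals $\dim C_{2s} = n - m_{2s}$ (the easy case) or is strictly smaller (the hard case, handled via the $\overline F$-equivariant isomorphism $\gamma : \Lambda_\parallel^\vee/\Lambda_\parallel \to \Lambda_\perp^\vee/\Lambda_\perp$ of Lemma \ref{fortytwo}).

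For the second step, let $U \subseteq V$ be the $\mathbb{F}_{p^2}$-span of $\{\overline{p^{-s}x_i} : i > m_{2s}\}$. Orthogonality of the $x_i$ gives $\overline{ph}(\overline{p^{-s}x_i}, \overline{p^{-s}x_j}) = 0$ for $i \neq j$, while $v(p \cdot p^{-2s} h(x_i,x_i)) = 1 + r_i - 2s \geq 1$ shows $\overline{ph}(\overline{p^{-s}x_i}, \overline{p^{-s}x_i}) = 0$. Hence $U$ is totally isotropic, so the analog of \ref{muff} produces a vertex $\tilde\Lambda \subseteq \Lambda$ in $\mathcal{L}_{2s}$ with $\tilde\Lambda^\sharp/\Lambda^\sharp = U$; the inclusion $\overline{p^{-s}x_i} \in U$ translates to $p^{-s}x_i \in \tilde\Lambda^\sharp$ for every $i > m_{2s}$, so $\tilde\Lambda \in S_s$. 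The main technical obstacle is the hard case of the adapted Lemma \ref{tomate}: the signature of $\Lambda_\bullet$ inside $N_{(s)}$ need not have the original shape $(1, n-1)$, so one must verify in each sub-case (determined by the parities of $\dim C_{2s}$ and of the order of $\det h|_{C_{2s}}$) that the incompatibility with injectivity of $F$ on $\Lambda_{\perp,0}$, which is the heart of \ref{tomate}'s hard case, persists.
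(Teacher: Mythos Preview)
Your proposal is correct and follows essentially the same route as the paper: first adapt Lemma~\ref{tomate} inside $N_{(s)}$ to force $p^{-s}x_i\in\Lambda$ for all $i>m_{2s}$, then pass to the finite hermitian space $(\Lambda/\Lambda^\sharp,\overline{ph})$ and let $\tilde\Lambda$ be the vertex associated via \ref{muff} to the isotropic span of the $\overline{p^{-s}x_i}$. Your caution about the signature shape is well placed and matches the paper's parenthetical remark that the easy/hard split in Lemma~\ref{tomate} is governed by whether $t_{\max}(\mathcal{L}_{2s})=\dim C_{2s}$ (equivalently, by the parity of $\dim C_{2s}$ together with that of $\mathrm{ord}\,\det(C_{2s},h)$), rather than by $n$ alone; once this is observed, the graph argument of Lemma~\ref{fortytwo} goes through verbatim.
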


\begin{remark}
This lemma is a generalization of Lemma \ref{upperbound} to the case of $m>1$ and arbitrary order of determinant of $h$. However, unlike in the $m=1$ case, we do not have any lower bound on the maximal possible type of $\tilde\Lambda$. Indeed, $S_s$ can be arbitrarily small, even consist of a single vertex of type 0.
\end{remark}

\begin{proof}
Let $\Lambda\in S_s'$ be of maximal type in $\mathcal{L}_{2s}$. The type of $\Lambda$ is either $\dim(C_{2s})$ or $\dim(C_{2s})-1$, depending on the parity of $\dim(C_{2s})$ and on the parity of the order of determinant of $(C_{2s},h)$. 

Let $m_{2s}<i\leq m$. By assumption, we have $p^{-s+1}x_i\in\Lambda^\sharp$. This implies (using that $r(x_i)>2s$) that $p^{-s}x_i\in\Lambda$. Indeed, Lemma \ref{tomate} generalizes to our situation, since the proof does not depend on the order of determinant of $h$ being odd (except for that, if $\textnormal{ord}\,\textnormal{det}(C,h)$ was even, the case of $n$ even would be the easy one to prove and the case of $n$ odd the hard one).

We now proceed as in the proof of Lemma \ref{upperbound}. We consider the hermitian $\mathbb{F}_{p^2}$-vector space $(V,\overline{ph})$, where $V=\Lambda/\Lambda^\sharp$ and $\overline{ph}$ is induced by $ph$. By \ref{muff}, vertices $\tilde\Lambda$ of type $t$ contained in $\Lambda$ correspond to $((t(\Lambda)-t)/2)$-dimensional isotropic subspaces $U=\tilde\Lambda^\sharp/\Lambda^\sharp$ of $V$. Furthermore, $p^{-s}x_i\in\tilde\Lambda^\sharp$ is equivalent to $\overline{p^{-s}x_i}\in U$, where $\overline{p^{-s}x_i}$ is the image of $p^{-s}x_i$ in $V$.

Now the $\overline{p^{-s}x_i}$ are isotropic (since $v(ph(p^{-s}x_i,p^{-s}x_i))=2r_i-2s+1>0$) and perpendicular to each other (since the $x_i$ were already assumed to be perpendicular to each other w.r.t.\ $h$). Therefore, they span an isotropic subspace $U$ of $V$, whose preimage $\tilde\Lambda^\sharp$ is the dual of some vertex of type $t(\Lambda)-2\dim U$ in $\mathcal{L}_{2s}$ belonging to $S_s$ and contained in $\Lambda$.
\end{proof}
This concludes the proof of Thm.\ \ref{vielex} and thus proves Thm.\ \ref{vielexprime}.

\begin{fliess}
We now prove our main theorem \ref{holygrail} in full generality.

First we observe that by Bruhat-Tits theory, the simplicial complex $\mathcal{L}_{\lfloor r_m/2\rfloor+1}\cong\mathfrak{B}(SU(C_{\lfloor r_m/2\rfloor+1},ph),\mathbb{Q}_p)$ is connected. Thm.\ \ref{vielex} provides us with an algorithm for computing $\mathcal{S}(\underline{x})=S_0$ from $\mathcal{L}_{\lfloor r_m/2\rfloor+1}$. To show connectedness of $\mathcal{S}(\underline{x})$, it is enough to prove that each step of this algorithm preserves connectedness.

Let $0\leq s\leq\lfloor r_m/2\rfloor$. We want to show that $S_s$ is connected under the assumption that $S_{s+1}'$ is connected. By Thm.\ \ref{vielex}, we have
\[ S_s:=\{\Lambda\in\mathcal{L}_{2s}\ |\ \exists\ \tilde\Lambda\in \Phi_s(S_{s+1}'):\ \Lambda\subseteq\tilde\Lambda\}. \]
Now $\Phi_s$ is a morphism of simplicial complexes, thus preserves connectedness, therefore $\Phi_s(S_{s+1}')$ is connected. But any vertex in $S_s$ is contained in (i.e.\ a neighbour of) some vertex in $\Phi_s(S_{s+1}')$. Thus passing from $\Phi_s(S_{s+1}')$ to $S_s$ does not create new connected components.

Now let $1\leq s\leq\lfloor r_m/2\rfloor$. We assume that $S_s$ is connected. Recall that by Thm.\ \ref{vielex}:
\[ S_s':=\{\Lambda\in\mathcal{L}_{2s}\ |\ \exists\ \tilde\Lambda\in\mathcal{L}_{2s}^{\max}:\ \Lambda\subseteq\tilde\Lambda,\,d_{2s}(\tilde\Lambda,S_s)\leq 1\}. \]
Connectedness of $S_s$ now implies connectedness of $S_s'$. Indeed, we saw in Section 8 how connectedness of $\mathcal{S}(p^{-1}x)$ implies connectedness of $\mathcal{S}(x)$ for a single special homomorphism $x$. The argument for this implication generalizes straightforwardly to our situation. This concludes the proof of Thm.\ \ref{holygrail}
\end{fliess}
As a further application of Thm.\ \ref{vielex}, we will now give a new proof of the criterion of Kudla and Rapoport for irreducibility in the case $m=n$, which will highlight the role of Bruhat-Tits theory.

\begin{prop}[Kudla-Rapoport, \cite{kr}, Thm.\ 1.1(iii)]
Let $m=n$. Let $\underline{x}\in\mathbb{V}^n$ be according to the assumptions of \ref{fleshwound} and the notational conventions of this section. Let
\begin{align*}
n^+_\textnormal{even} & =\textnormal{Card}(\{i\ |\ r_i\geq 2,\,r_i\ \textnormal{even}\}),\\
n^+_\textnormal{odd} & =\textnormal{Card}(\{i\ |\ r_i\geq 3,\,r_i\ \textnormal{odd}\}).
\end{align*}
Then $\mathcal{Z}(\underline{x})_\textnormal{red}$ is irreducible if and only if
\[ \max(n^+_\textnormal{even},n^+_\textnormal{odd})\leq 1. \]
\end{prop}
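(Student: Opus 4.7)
\medskip
\noindent\textbf{Plan of proof.} By Proposition \ref{bts} (together with Proposition \ref{dimension}), the irreducible components of $\mathcal{Z}(\underline{x})_\textnormal{red}$ are exactly the closed Bruhat-Tits strata $\mathcal{N}_\Lambda$ indexed by the vertices $\Lambda \in \mathcal{S}(\underline{x})$ of type $t_0$, where $t_0$ is the largest odd integer $\leq n - m_1$. Thus $\mathcal{Z}(\underline{x})_\textnormal{red}$ is irreducible if and only if $|\mathcal{S}(\underline{x}) \cap \mathcal{L}^{(t_0)}| = 1$, and the strategy is to compute this cardinality by unwinding the algorithm of Theorem \ref{vielex}.

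Since $m = n$ and the $x_i$ are $h$-orthogonal, $C_r = 0$ for $r > r_m$, so the starting set $S_{\lfloor r_m/2 \rfloor + 1}'$ of the backward recursion is a single vertex. Let $N_s$ denote the number of vertices of $S_s'$ of maximal type in $\mathcal{L}_{2s}$, and set $N_0 := |\mathcal{S}(\underline{x}) \cap \mathcal{L}^{(t_0)}|$. I would show by downward induction on $s$ that $N_0 = 1$ if and only if $\max(n^+_\textnormal{even}, n^+_\textnormal{odd}) \leq 1$. The transition $N_{s+1} \to N_s$ decomposes according to the algorithm's two substeps: Step 1 applies the injective morphism $\Phi_s$ and takes the downward closure, which, by the identity established in the proof of Lemma \ref{knuddel}, preserves the count of maximal-type-in-$S_s$ vertices; Step 2 then enlarges by adjoining every maximal-type-in-$\mathcal{L}_{2s}$ vertex at distance $\leq 1$ from $S_s$. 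All potential growth in $N_s$ therefore occurs in Step 2, and via \ref{muff} the new maximal-type vertices split into two families: (a) those strictly containing some $\Lambda^* = \Phi_s(\Lambda_*)$ (with $\Lambda_*$ of maximal type in $S_{s+1}'$), and (b) those sharing only a proper sub-vertex of $\Lambda^*$ with $\Lambda^*$.

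Family (a) corresponds to the nonzero maximal isotropic subspaces of the hermitian $\mathbb{F}_{p^2}$-space $\Lambda^{*,\sharp}/p\Lambda^*$, and is empty exactly when the type deficit $T_s - t(\Lambda^*)$ vanishes, where $T_s$ is the maximal type occurring in $\mathcal{L}_{2s}$. A direct computation using $t(\Lambda^*) = T_{s+1} + k_{2s+1}$ (with $k_r := |\{i : r_i = r\}|$) and the fact that $\textnormal{ord}\,\textnormal{det}(C_{2s}, h)$ and $\textnormal{ord}\,\textnormal{det}(C_{2s+2}, h)$ differ mod $2$ by $k_{2s+1}$ shows that the deficit always equals $k_{2s}$ up to a correction in $\{-1, 0, 1\}$; since the deficit is forced to be even, the correction vanishes whenever $k_{2s}$ is even. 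Consequently, family (a) never contributes at any step iff $\sum_{s \geq 1} k_{2s} = n^+_\textnormal{even} \leq 1$. Family (b) requires $\Lambda^*$ to admit a proper sub-vertex in $\mathcal{L}_{2s}$, i.e., $t(\Lambda^*)$ must exceed the minimal type of $\mathcal{L}_{2s}$ (which is $0$ or $1$ according to the parity of $\textnormal{ord}\,\textnormal{det}(C_{2s}, h)$); classical counts of isotropic lines in $\mathbb{F}_{p^2}$-hermitian spaces of dimension $\geq 2$ (all hyperbolic, with $p+1$ isotropic lines) then show $N_s > 1$ as soon as this occurs. Translating the condition ``$t(\Lambda^*)$ equals the minimal type'' back through the recursion yields $\sum_{s \geq 1} k_{2s+1} = n^+_\textnormal{odd} \leq 1$. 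Combining the two equivalences gives the criterion. The main difficulty is the combinatorial bookkeeping: one must carefully track the parity of $\textnormal{ord}\,\textnormal{det}(C_{2s}, h)$ and the resulting minimal and maximal types at every level, and verify that no accidental cancellations between distinct sub-vertices of $\Lambda^*$ in family (b) allow $N_s = 1$ in spite of the presence of nontrivial hermitian quotients.
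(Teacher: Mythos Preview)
Your overall strategy---using the algorithm of Theorem~\ref{vielex} and tracking how maximal elements propagate through the recursion---is exactly the paper's. However, you overcomplicate the analysis, and the bookkeeping you flag as ``the main difficulty'' is in fact avoidable.

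The paper's proof sidesteps your (a)/(b) decomposition and the step-by-step cardinality tracking entirely. Its key observation is qualitative rather than quantitative: passing from $S_{s+1}'$ to $S_s$ never creates or destroys inclusion-maximal elements (since $\Phi_s$ is an injection preserving inclusions, and taking downward closure adds only non-maximal vertices), while passing from $S_s$ to $S_s'$ creates new maximal elements \emph{if and only if $\mathcal{L}_{2s}$ is not a single point}. The argument for the latter is just Equation~\eqref{schnitt}: any vertex in $S_s$ not of maximal type in $\mathcal{L}_{2s}$ is the intersection of the several maximal-type vertices above it, all of which land in $S_s'$. There is no need to count isotropic lines, compute type deficits, or worry about ``accidental cancellations''.

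Since the $\Phi_s$ compose to an injection $\mathcal{L}_{2s}\hookrightarrow\mathcal{L}_2$, the condition ``$\mathcal{L}_{2s}$ is a single point for all $s\geq 1$'' collapses to ``$\mathcal{L}_2$ is a single point''. The latter is a one-line check on the Bruhat--Tits building of $SU(C_2,ph)$: it holds precisely when $\dim C_2\leq 1$, or $\dim C_2=2$ with $\textnormal{ord}\,\textnormal{det}(C_2,h)$ odd. Translating these three cases back to the $r_i$ gives exactly $\max(n^+_\textnormal{even},n^+_\textnormal{odd})\leq 1$. Your approach would eventually arrive at the same place, but only after a delicate parity analysis at every level $s$; the paper's route makes all of that unnecessary.
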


\begin{proof}
We first observe that $\mathcal{Z}(\underline{x})$ is irreducible if and only if there is $\Lambda\in\mathcal{L}$ for which $\mathcal{Z}(\underline{x})_\textnormal{red}=\mathcal{N}_\Lambda$. The latter statement means that $\Lambda\in\mathcal{S}(\underline{x})$ and that every $\tilde\Lambda\in\mathcal{S}(\underline{x})$ is contained in $\Lambda$. In other words, $\mathcal{Z}(\underline{x})_\textnormal{red}$ is irreducible if and only if $\mathcal{S}(\underline{x})$ has a unique element maximal for inclusion.

Now take a look at the behaviour of ``irreducible components'' under the operations of the algorithm of Thm.\ \ref{vielex}. First, the simplicial complex $\mathcal{L}_{r_n+1}$ consists of a single point, because we assumed $m=n$ and thus $C_{r_n+1}$ is the zero space. Therefore, $S_{\lfloor r_n/2\rfloor+1}'$ consists of a single point, thus is trivially ``irreducible''.

Then, for any $s$, passing from $S_{s+1}'$ to $S_s$ does not create or destroy maximal elements. Indeed, $\Phi_s$ is an injection of simplicial complexes which preserves inclusions, i.e.\ $\tilde\Lambda\subseteq\Lambda\Rightarrow\Phi_s(\tilde\Lambda)\subseteq\Phi_s(\Lambda)$. Therefore, maximal elements $\Lambda$ in $S_{s+1}'$ give rise to maximal elements $\Phi_s(\Lambda)$ in $\Phi_s(S_{s+1}')$, and no new maximal elements are generated. Also, any element in $S_s$ is contained in some element in $\Phi_s(S_{s+1}')$, hence the maximal elements in $S_s$ are the same as those in $\Phi_s(S_{s+1}')$.

On the other hand, passing from $S_s$ to $S_s'$ creates new inclusion maximal elements whenever $\mathcal{L}_{2s}$ is not a single point. Indeed, if $\mathcal{L}_{2s}$ is not a single point, then different types of vertices occur (e.g.\ any two vertices which neighbour each other are of different type). Let $\Lambda\in S_s$ be arbitrary, not of maximal type in $\mathcal{L}_{2s}$. Then there is more than one vertex of maximal type containing $\Lambda$. Indeed, $\Lambda$ can be written as the intersection of the vertices of maximal type containing it (see the proof of Prop.\ \ref{beliebigekodim}). By Thm.\ \ref{vielex}, all such vertices are in $S_s'$, and maximal because they are maximal in $\mathcal{L}_{2s}$.

Therefore, $S_0=\mathcal{S}(\underline{x})$ has a unique maximal element if and only if the $\mathcal{L}_{2s}$ for $s\geq 1$ are single points. The latter is equivalent to saying that $\mathcal{L}_2$ consists of a single point, since $\Phi_{s-1}\circ\dots\circ\Phi_1:\mathcal{L}_{2s}\rightarrow\mathcal{L}_2$ is injective. But $\mathcal{L}_2\cong\mathfrak{B}(SU(C_2,ph),\mathbb{Q}_p)$ consists of a single point if and only if either $\dim C_2\leq 1$, or $\dim C_2=2$ and the order of determinant of $h$ on $C_2$ is odd.

Since $C_2$ is by definition spanned by the $x_i$ with $r_i>1$, $\dim C_2=0$ means that $n^+_\textnormal{odd}=n^+_\textnormal{even}=0$, while $\dim C_2=1$ means that one of the sums $n^+_\textnormal{odd}$, $n^+_\textnormal{even}$ equals 1, while the other one vanishes, and finally $\dim C_2=2$ together with the assumption that $\textnormal{ord}\,\textnormal{det}(C_2,h)$ is odd means that $n^+_\textnormal{odd}=n^+_\textnormal{even}=1$.

\end{proof}

\end{document}